\newcommand{\T}{{\mathbb T}}
\newcommand{\dbar}{\overline{\partial}}
\newcommand{\ddbar}{\partial \overline{\partial}}
\newcommand{\kcalomega}{\kcal_{\omega}}
\newcommand{\ncal}{{\mathcal N}}
\newcommand{\half}{\frac{1}{2}}
\newcommand{\kahler}{K\"ahler }
\newcommand{\RP}{{\mathbb R} {\mathbb P}}
\newcommand{\C}{\mathbb{C}}
\newcommand{\R}{\mathbb{R}}
\newcommand{\Z}{\mathbb{Z}}
\newcommand{\acal}{\mathcal{A}}
\newcommand{\ccal}{\mathcal{C}}
\newcommand{\gcal}{\mathcal{G}}
\newcommand{\hcal}{\mathcal{H}}
\newcommand{\kcal}{\mathcal{K}}
\newcommand{\pcal}{\mathcal{P}}
\newcommand{\ocal}{\mathcal{O}}
\newcommand{\zcal}{\mathcal{Z}}
\newtheorem{conjecture}{Conjecture}[section]
\newtheorem{condition}{Condition}[section]
\numberwithin{equation}{section}
\begin{document}

\title[Boundedness of the number of nodal domains]{Boundedness  of the number of nodal domains for eigenfunctions of generic Kaluza-Klein  3-folds}
\alttitle{Limite du nombre de domaines nodales des  fonctions propres de vari\'et\'ees Kaluza-Klein generiques en dimension 3}
\author{\firstname{Junehyuk} \lastname{Jung}}

\address{Department of Mathematics, Texas A\&M University, College Station, TX 77843-3368 USA}

\email{junehyuk@math.tamu.edu}

\author{\firstname{Steve} \lastname{Zelditch}}
\address{Department of Mathematics, Northwestern  University, Evanston, IL 60208, USA}

\email{zelditch@math.northwestern.edu}

\thanks{Research partially supported by NSF grant  DMS-1541126. The first author is partially supported by Sloan Research Fellowship and by NSF grant DMS-1900993. The first author thanks Eviatar Procaccia for an enlightening discussion.}

\keywords{Eigenfunction of the Laplacian,  Principal bundle, Kaluza-Klein metric, Nodal domain}
\altkeywords{fonction propre du Laplacien, fibr\'e principale,  metrique de Kaluza-Klein, domaine nodale}
\subjclass{58J50}

\begin{abstract}
This article concerns the number of nodal domains of eigenfunctions of
the Laplacian on special Riemannian  $3$-manifolds, namely nontrivial principal $S^1$ bundles
$P \to X$  over Riemann surfaces equipped with certain $S^1$ invariant metrics, the Kaluza-Klein metrics. We prove for generic Kaluza-Klein metrics that any Laplacian eigenfunction has exactly two nodal domains unless it is invariant under the $S^1$ action.

We also construct an explicit orthonormal eigenbasis on the flat $3$-torus $\mathbb{T}^3$ for which every non-constant eigenfunction has two nodal domains.
\end{abstract}
\begin{altabstract}
Cet article concerne le nombre de domaines nodales des fonctions propres du Laplacien sur des vari\'et\'ees Riemanniennes Kaluza-Klein en dimension trois,
\`a savoir des vari\'et\'ees qui sont fibr\'es $S^1$-principales  $P \to X$ sur des surfaces de Riemann \'equip\'ees avec une m\'etrique $S^1$-invariante de
type  Kaluza-Klein. Pour  metriques g\'en\'eriques de ce type, on preuve que chaque fonction propre poss\`ede exactement deux domains
nodales, sauf s'il est invariant par l'action de $S^1$.

On construit aussi une base orthonormale de fonctions propres explicite du tore plat ${\mathbb T}^3$ pour que chaque fonction propre non-constant poss\`ede
exactement deux domaines nodales.
\end{altabstract}
\maketitle


\section{Introduction}

This article is concerned with the number of nodal domains of eigenfunctions
of the Laplacian on certain $3$-dimensional compact smooth Riemannian  manifolds $(P, G)$.
The manifolds are $S^1 = SO(2)$ bundles $\pi:  P \to X$ over a Riemannian
surface $(X,g)$, and $G$ is assumed to be a Kaluza-Klein metric {\it adapted} to $\pi$, i.e.,  $G$ is invariant under the free $S^1$ action
on $P$ and
there exists a splitting $TP = H(P) \oplus V(P)$ of $TP$ so that   $d \pi: H_p(P) \to T_{\pi(p)} X$ is isometric and so the fibers are geodesics. Thus, $\pi: P \to X$
is a special kind of Riemannian submersion with totally geodesic fibers in the sense of \cite{BBB}
 (see Definition
\ref{SASAKIDEF} and Definition \ref{ADAPTEDDEF}).
The $S^1$ action commutes with the
 the Laplacian  $\Delta_G$ of the Kaluza-Klein metric $G$ and one may separate
 variables to obtain an orthonormal basis of  joint eigenfunctions $\phi_{m,j}$, \begin{equation} \label{DELTAG}
 \Delta_G \phi_{m,j} = - \lambda_{m,j} \phi_{m,j}, ~
 \frac{\partial}{\partial \theta} \phi_{m,j} = i m \phi_{m,j}. \end{equation}

Our focus is on the nodal sets of the real or imaginary parts of
\begin{equation}\label{uv}
\phi_{m,j}= u_{m,j} + i v_{m,j}
\end{equation}
and on particularly on the number of their nodal domains.
Since $\Delta_G$ is a real operator, the real  and imaginary parts \eqref{uv} satisfied the modified eigenvalue system,
\begin{equation*}
\left\{ \begin{array}{l}   \Delta_G  u_{m,j} =  -\lambda_{m,j}  u_{m,j}, \\ \\
   \Delta_G  v_{m,j} = -\lambda_{m,j} v_{m,j}, \\ \\
   \frac{\partial}{\partial \theta} u_j = m v_j, \;\; \frac{\partial}{\partial \theta}  v_j = -m u_j.
   \end{array} \right.
\end{equation*}

   Our main result (Theorem \ref{MAIN}) is that when $0$ is a regular value of $\phi_{m,j}$ for all $(m,j)$, then for $m \not=0$,  the nodal sets of  $u_{m,j}$,
   resp.   $v_{m,j}$, are connected and there exist exactly $2$ corresponding nodal domains. The case $m = 0$ is special because
   $\phi_{0, j}$ is then real valued and is  pullback from the base $X$;
   in this  case,  the number of connected components of the nodal set (and the number of nodal domains)  is the same
   as for the corresponding eigenfunction on $X$.
   Theorem \ref{GENERIC} shows  that it is a generic property of Kaluza-Klein metrics
   on $S^1$ bundles over Riemann surfaces that $0$ is indeed a regular value of $\phi_{m,j}$  for all $(m,j)$. The precise statement requires
   a discussion of the geometric data underlying a Kaluza-Klein metric and how we allow it to vary when defining `genericity'. An introductory
   discussion of the Kaluza-Klein metrics of this article  is given in Section \ref{KKINTRO} and a more detailed discussion
   is given in Section \ref{PERTURBATIONSECT} (see Theorem \ref{GENERIC1} and Lemma \ref{simplesimple}).

\subsection{\label{KKINTRO} Adapted Kaluza-Klein metrics}

We now define Kaluza-Klein metrics on a three-dimensional manifold $P$ which is an $S^1$ bundle over a (usually) compact
Riemannian surface $X$.  In our main results, $P$ is the unit co-circle bundle of an ample complex holomorphic line bundle $L \to X$.
Thus, $P = S^3$ or $P = U(S^2)$ (the unit tangent bundle) when $X = S^2$, and $P = S^* X$ (the unit conormal bundle) when the genus $g \geq 2$. When $g =1$, $P$ can be the
unit circle bundle of the theta line bundle over $\T^2$, or it may  be the trivial circle bundle $S^* \T^2$ (see Section \ref{FLATTORISECT}). Other examples where $X$ has constant curvature are discussed in  Section \ref{SCCSECT}.

A Kaluza-Klein metric is determined by the following data:
\begin{itemize}
\item[(i)]  A surface $X$ equipped with a Riemannian metric $g$ and a complex
structure $J$,

\item[(ii)]  A  nontrivial complex holomorphic  line bundle $L \to X$ over a surface,

\item[(iii)]  A Hermitian metric $h$ on $L$,

\item[(iv)]  A complex structure $J_L$ on $L$,

\item[(v)]  An $h$-compatible  connection $\nabla$ on $L$.

\end{itemize}
In this article, we fix $J$, $L$ and $J_L$ and only vary the
data $(g, h, \nabla)$.  The unitary frame bundle for the Hermitian metric $h$ is defined by  $$P_h = \{(z, \lambda) \in L^*: h_z^*(\lambda) = 1\}.$$
The connection $\nabla$ induces a connection $1$-form on $P_h$ and a splitting $T P_h = H(P_h) \oplus V(P_h)$ into horizontal and vertical
spaces; see Section \ref{GEOMSECT} for background.

\begin{defi} \label{SASAKIDEF}The Kaluza-Klein metric on
 $P_h$ is the  $U(1)$-invariant metric $G$ such that the horizontal
space $H_{p} : = \ker d\pi$ is isometric to $T_{\pi(p)} X$, so that $V = \R \frac{\partial}{\partial \theta}$ is
orthogonal to $H$ and is invariant under the natural $S^1$ action and so that the orbits of the $S^1$ action  are unit speed vertical geodesics. \end{defi}

The data $(g, h, \nabla)$ determines a horizontal
Laplacian $\Delta_H$,  a vertical Laplacian $\frac{\partial^2}{\partial \theta^2}$, and their sum, the Kaluza-Klein Laplacian
\eqref{DELTAG},
\begin{equation} \label{DELTAG2} \Delta_G = \Delta_H + \frac{\partial^2}{\partial \theta^2}.  \end{equation}
As is well-known, sections $s$ of powers $L^m$ of a complex  line bundle $L$ lift to equivariant functions $\hat{s}: L^* \to \C$
on the dual line bundle by the formula $\hat{s}(z, \lambda) = \lambda(s(z))$ where a point of $L^*$ is denoted by $\lambda \in L_z^*$.
 Under this identification, the  horizontal Laplacian is equivalent to the  Bochner Laplacians $\nabla_m^* \nabla_m$ on sections of $L^m$.
Thus, equivariant eigenfunctions of $\Delta_G$ of weight $m$ on $M$ are lifts of eigensections of $\nabla_m^*\nabla_m$. See Section
\ref{BOCHNERSECT} and   Lemma \ref{BOCHNERLEM} for details.  In proving genericity
theorems it is easier to work downstairs on $X$. But the nodal results pertain to the equivariant eigenfunctions on $P_h$.

\begin{rema}
Not all $S^1$ invariant metrics on  $SX$ are adapted Kaluza-Klein metrics (see Section \ref{ADAPTEDSECT}). Many of the techniques
of this paper extend with no major modifications to general $S^1$-invariant metrics on principal $S^1$ bundles over manifolds of all dimensions.
For simplicity of exposition we restrict to dimension $3$. \end{rema}

 \subsection{Nodal sets}
We thus have two versions of the eigenfunctions of the Kaluza-Klein $\Delta_G$,
first as scalar complex valued equivariant eigenfunctions on $P_h$ and second
as complex eigensections on $X$. In each version we have a nodal set,
and we use the base nodal set on $X$ to analyse the nodal set on $P_h$.

We denote the eigensection corresponding to $\phi_{m,j}$ as $f_{m,j} e_L^m$ in a local holomorphic frame.
We mainly consider $L = K_X$ and then we write the section as $f_{m,j}(z) (dz)^m$.
Let
\begin{equation*}
\Re f_{m,j} = a_{m,j}(z), \;\; \Im f_{m,j} = b_{m,j}(z).
\end{equation*}
Then,
\[
f_{m,j}(z) e^{-i m\theta} = (a_{m,j}(z) + i  b_{m,j}(z))(\cos m \theta - i \sin m \theta),
\]
so that with $\phi_{m, j} = u_{m,j} + i v_{m,j}$,
 \begin{equation} \label{ubab} \left\{ \begin{array}{l}
 u_{m,j} =  a_{m,j} \cos m \theta  +  b_{m,j} \sin m\theta,\\
v_{m,j} = b_{m,j} \cos m \theta  -a_{m,j} \sin m \theta.
\end{array} \right. \end{equation}
See Section \ref{LIFTSECT} for more details.

We denote by $\zcal_{f_{m,j}}$ the  zero set of the eigensection $f_{m,j} e_L^m$ on $X$:
\begin{equation*}
\zcal_{f_{m,j}} = \{z \in X: f_{m,j}(z) = 0\}.
\end{equation*}
It is easy to see that the zero set $\zcal_{\phi_{m,j}}$ of $\phi_{m,j}$ is the inverse image of $\zcal_{f_{m,j}}$
under the natural projection $\pi$:
\begin{equation*}
\zcal_{\phi_{m,j}}  = \pi^{-1} \zcal_{f_{m,j}}.
\end{equation*}
Usually we study the nodal sets of the real and imaginary parts of the lift, not to be confused with the lifts of the real
and imaginary parts of the local expression $f_{m,j}$ of the section (since the frame $e_L^m$ must also be taken into account).
 In general, it  is not obvious whether or not  the zero set of $f_{m,j}$ is discrete in $X$.

 We  denote the nodal sets of the real, resp. imaginary parts, of the lift by  $$\ncal_{ \Re \phi_{m,j}}  \{p \in P_h: \Re \phi_{m,j}(p) = 0\}, ~\mathrm{ resp. }~  \ncal_{\Im \phi_{m,j}} = \{p \in P_h: \Im \phi_{m,j}(p) = 0\}.$$
The analysis is the same for real and imaginary parts and we generally work with the imaginary part, following the tradition for quadratic differentials.

Perhaps the most familiar setting for such Kaluza-Klein metrics and lifts
of $m$-differentials to equivariant sections is that of hyperbolic surfaces
of finite area. The reader familiar with Maass forms and operators may want
to compare Kaluza-Klein notions  with those of $SL(2, \R)$-theory
in Section \ref{HYPERBOLICSECT}.

\subsection{Statement of results}
 Let $(X, J, g)$ be a Riemannian surface with  complex structure $J$. Let $(L, h)$ be a Hermitian
 holomorphic line bundle over $X$, and let
 $P_h \subset L^* \to X$ be the principal $S^1$ bundle associated to $h$. Let $\nabla_h$ be an $h$ compatible connection
 on $L$, and let   $G = G(g, h, \nabla_h)$ be the associated Kaluza-Klein metric on $P_h$.

 As mentioned above, weight $0$ (invariant) eigenfunctions are special because they are real-valued (once they are multiplied by a suitable constant). We therefore separate the
case $m = 0$ from the remainder of the discussion, and state  the obvious (but interesting)
\begin{prop} \label{m=0} For $m = 0$,   invariant eigenfunctions ($m = 0$) of $\Delta_G$ are lifts $\pi^* \psi_{j}$ of eigenfunctions $\psi_j$ of $\Delta_g$ on the base $X$, and the nodal set of
 $\pi^* \psi_{j} $ is the inverse image under $\pi$ of the nodal set
 of $\psi_{j}$.  The number of nodal domains of $\pi^* \psi_{j}$ equals
 the number of nodal domains of $ \psi_{j}$.

\end{prop}

Indeed,
    their  nodal sets are inverse images of nodal sets on the base. Hence the number of nodal domains of `invariant'  Kaluza-Klein eigenfunctions is the number for the corresponding eigenfunction on the base.
Henceforth we always assume $m \not= 0$.

  To prepare for our main result when $m \not=0$, we first state a result on generic properties of equivariant Kaluza-Klein eigenfunctions. By  `generic' properties
 of Kaluza-Klein metrics, we mean properties of residual sets in a suitable $C^k$ space of the data $(g, h, \nabla)$, often when only one component is
 varied and the others are fixed.   The generic properties of concern in this article  hold for  many choices of
 Banach spaces of data, which could be suitable  $C^k$ spaces or a Sobolev spaces  $H^s$. Our basic reference for genericity properties of eigenvalues/eigenfunctions in \cite{U}, and the reader is referred there for background. Somewhat surprisingly, generic properties of eigensections of Bochner-Kodaira operators on complex line bundles do not seem to have been studied before.

   The most general
 genericity results are stated in Theorem \ref{GENERIC1} in Section \ref{PERTURBATIONSECT}. In these results,  we define `admissible data' such as  $(g, h, \nabla)$, and prove the main genericity properties as different components of  the data  is varied. Since the general result
 requires some definitions from Sections \ref{GEOMSECT}-\ref{BOCHNERSECT}, we only state the most elementary result here in the case where $L = L = T^*X$ is the canonical bundle,
 and where only the metric $g$ on $X$ is varied. But we also require
 that the Hermitian metric $h$ on $K$ is induced by $g$ and that the connection $\nabla_h$ is the Chern (or equivalently, Riemannian) connection. Of course, $K$ s not ample when $g = 0,1$ but we are considering eigensections of Bochner-Kodaira Laplacians, not holomorphic sections, so ample-ness is not particularly relevant. Moreover, the proofs
 also work of $K^{-1}$, so that one could replace $T^* S^2$ by the ample line bundle $T S^2$ or $\ocal(1) \to {\mathbb \C} {\mathbb P}^1$ and obtain similar results.

\begin{theo}\label{GENERIC}  Let $(X, J)$ be a Riemann surface, and let  $L = K^m$. We consider Riemannian metrics  $g$ in the conformal class  associated to $J$. We assume that $h$ is the Hermitian metric induced by $g$ and
that  $\nabla$ is the Levi-Civita connection. Then, for  generic  metrics $g$ in the class of $J$ on $X$,
\begin{enumerate}
\item the spectrum of
each Bochner Laplacians $\nabla_{g,h}^* \nabla$ on $C^k(X, L^m)$ is simple (i.e. of multiplicity $1$). Thus, the multiplicity of the eigenvalue $\lambda=\lambda_{m,j}$ of $\Delta_G$ is $1$ if $m=0$, and $2$ if $m\neq 0$.

\item Every eigenfunction is a joint eigenfunction of $\Delta_G$ and $\frac{\partial^2}{\partial \theta^2}$.

  \item  all of the eigensections $f_{m, j}$ have
isolated zeros and zero is a regular value. In particular,  $\zcal_{f_{j ,m}} $ is a finite set of points;

\item
 If we lift sections to equivariant eigenfunctions $\phi$,
then $\Re \phi$ and $\Im \phi$ have zero as a regular value.

\end{enumerate}
 \end{theo}

 An important consequence of  (1)-(2) of Theorem \ref{GENERIC} is that, for generic
 data, all real Kaluza-Klein eigenfunctions are real/imaginary parts of  equivariant eigenfunctions. Therefore, the results
 we prove for equivariant  eigenfunctions  hold for all possible eigenfunctions.
Theorem \ref{GENERIC1} states similar results for three other types of variations of the data.

We can now state our main result. The first statement repeats (1)-(2)
of Theorem \ref{GENERIC} for the sake of clarity.

 \begin{theo}\label{MAIN} Suppose that the data $(g, h, \nabla)$ of
 the Kaluza-Klein metric satisfies the generic properties of Theorem \ref{GENERIC}. Then,

 \begin{enumerate}

\item The eigenspace of $\Delta_G$ corresponding to $\lambda=\lambda_{m,j}=\lambda_{-m,j}$ is spanned by $ \phi_{m,j}$ and  $\phi_{-m,j}= \overline{\phi_{m,j}}$. In particular, any real eigenfunction with the eigenvalue $\lambda_{m,j}$ is a constant multiple of $T_\theta \left(\Re \phi_{m,j}\right) $, where $T_\theta$ is the $S^1$ action on $P$ parameterized by $\theta$.

 \item For $m \neq 0$, the nodal sets of $\Re \phi_{m,j}$  are connected.

   \item For $m \neq 0$, the number of  nodal domains of $\Re \phi_{m,j}$  is $2$.
 \end{enumerate}

 \end{theo}

\noindent Note from Weyl law that $\#\{\lambda_{j,0}<\Lambda\} \sim \Lambda$ and that $\#\{\lambda_{m,j}<\Lambda\} \sim \Lambda^{3/2}$. Therefore as an immediate consequence of Theorem \ref{MAIN}, we have the following:
\begin{coro}
Let $P \to X$ be a non-trivial principal $S^1$ bundle with a generic Kaluza-Klein metric. For any given orthonormal eigenbasis, almost all (i.e., along a subsequence of density one) eigenfunctions have exactly two nodal domains.
\end{coro}

The density one subsequence is of course the one with $m \not=0$.
 Theorem \ref{MAIN} furnishes the first example of Riemannian manifolds
of dimension $> 2$ for which the number of nodal domains and connected
components of the nodal set have been counted precisely. The results
for $m \not=0$ may seem rather surprising, since in dimension $2$ the only
known sequences of eigenfunctions with a bounded number of nodal domains are those constructed in an ingenious way by H. Lewy on the standard $S^2$ \cite{Lew77} and those of Stern on a flat torus \cite{CH2,CH}. In those cases,
the separation-of-variables eigenfunctions have connected nodal sets but the complement of the nodal set has many components, i.e., nodal domains, saturating the Courant bound that the number
of nodal domains of the $j$th eigenfunction (in order of increasing eigenvalue) is $j$. In the Kaluza-Klein case, all eigenfunctions for generic
Kaluza-Klein metrics are separation-of-variables eigenfunctions and have
connected nodal sets. But the connectivity is of a different kind than
in dimension two and by a simple argument it induces connectivity of nodal
domains.

\begin{rema}
When $P \to X$ is trivial, and $P\cong S^1 \times X$ is endowed with the product metric, we have $\phi_{m,j}=\psi_j e^{im\theta}$ where $\psi_j$ is an eigenfunction of $\Delta_g$ on the base $X$. Hence $\Re \phi_{m,j} = \psi_j\cos m\theta$ has many nodal domains, and the last statement in Theorem \ref{MAIN} fails. Hence, the `generic' set of metrics is not the
full set of metrics. See Section \ref{FLATTORISECT} for flat tori for a product setting where the nodal results above do hold.

\end{rema}

\subsection{Outline of the proof}

 The nodal set of the lift real and imaginary parts of the lift $\phi_{m,j}$ of $f_{m,j} e_L^m$  is very different over nodal versus non-nodal points of $f_{m,j}$.
Let \begin{equation} \label{SIGMADEF} \Sigma: = \pi^{-1}\zcal_{f_{m,j}} \end{equation}  be the inverse image of the base nodal points. It is a union of fibers and is a finite union  of fibers
if and only if $f_{m,j}$ has a finite number of zeros. We refer to $\Sigma $ as the `singular fibers' or singular set.
We denote by $X \backslash \zcal_{f_{m,j}}$ the punctured Riemann surface in which the
zero set of $f_{m,j}$ is deleted.  A key statement in the nodal analysis is the following:

\begin{prop} \label{COVER} For $m \not= 0$, the  maps
$$\pi: \ncal_{\Re \phi_{m,j}} \backslash \Sigma \to X \backslash \zcal_{f_{m,j}},\;\;  \ncal_{\Im \phi_{m,j}} \backslash \Sigma \to X \backslash \zcal_{f_{m,j}}\;\; $$
is an $m$-fold covering space. \end{prop}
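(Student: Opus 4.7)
The plan is to reduce to an explicit local model, exhibiting the nodal set over a trivializing neighborhood of the base as the graph of finitely many smooth fiber functions. Fix $z_0 \in X \setminus \zcal_{f_{m,j}}$ and a simply connected open $U \ni z_0$ on which $f_{m,j}$ is nowhere vanishing. On $U$ I can pick a smooth branch $\eta(z)$ of $\arg f_{m,j}(z)$, so that, setting $r(z) := |f_{m,j}(z)| > 0$, one has $a_{m,j} = r\cos\eta$ and $b_{m,j} = r\sin\eta$. In a local trivialization $\pi^{-1}(U) \cong U \times S^1$ with fiber coordinate $\theta$, substituting these into \eqref{ubab} and applying the sum-to-product identities gives
\begin{equation*}
\Re \phi_{m,j}(z,\theta) = r(z)\cos\bigl(m\theta - \eta(z)\bigr), \qquad \Im \phi_{m,j}(z,\theta) = -r(z)\sin\bigl(m\theta - \eta(z)\bigr).
\end{equation*}

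The main step is then to read off the local structure of the nodal sets. Since $r(z) > 0$ on $U$, the condition $\Re \phi_{m,j}(z,\theta) = 0$ reduces to $m\theta \equiv \eta(z) + \tfrac{\pi}{2} \pmod{\pi}$, and $\Im \phi_{m,j}(z,\theta) = 0$ to $m\theta \equiv \eta(z) \pmod{\pi}$. Solving for $\theta \in S^1$ in either case yields a finite family of smooth sections $\theta_k(z) = \tfrac{1}{m}\bigl(\eta(z) + c_k\bigr)$, one for each residue $c_k$ giving a distinct value modulo $2\pi$. The graphs $\Gamma_k := \{(z,\theta_k(z)) : z \in U\} \subset \pi^{-1}(U)$ are smoothly embedded (since $\eta$ is smooth on $U$), pairwise disjoint (since the $\theta_k(z)$ are distinct in $S^1$), and by construction they exhaust $\ncal_{\Re \phi_{m,j}} \cap \pi^{-1}(U)$, respectively $\ncal_{\Im \phi_{m,j}} \cap \pi^{-1}(U)$. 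Because each $\Gamma_k$ is the graph of a function, $\pi|_{\Gamma_k}: \Gamma_k \to U$ is a diffeomorphism, so $\pi^{-1}(U) \cap \ncal_{\Re \phi_{m,j}}$ is a disjoint union of copies of $U$ mapping diffeomorphically to $U$ under $\pi$. This is precisely the local triviality condition defining a covering space; the degree is read off from the number of roots $\theta_k$.

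The delicate point, and the one to state carefully, is that the branch $\eta$ is only defined locally on $X \setminus \zcal_{f_{m,j}}$. Going around a loop that encircles a zero of $f_{m,j}$ of order $\ell$, the branch $\eta$ shifts by $2\pi \ell$, which cyclically permutes the local sheets $\Gamma_k$ via a rotation of the fiber coordinate. This monodromy is invisible to the local triviality argument and thus does not affect the covering-space conclusion of the proposition, but it is what governs the global connectivity of $\ncal_{\Re \phi_{m,j}} \setminus \Sigma$ and will be the essential mechanism used to deduce connectedness of the full nodal set and the counts of nodal domains asserted in Theorem \ref{MAIN}.
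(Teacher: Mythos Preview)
Your argument is correct and follows essentially the same approach as the paper: both solve a trigonometric equation in the fiber coordinate to count preimages over a regular point and then observe that the solutions vary smoothly (the paper packages this as Lemma~\ref{REG}, using $\tan m\theta = a_{m,j}/b_{m,j}$ where you use the polar form $f_{m,j}=re^{i\eta}$). Note that your computation in fact produces $2m$ distinct values $\theta_k$ in the fiber circle $S^1$, in agreement with the $2m$-fold cover asserted in the paper's Section~\ref{NODALDOMAINSECT1} lemma rather than the $m$ appearing in the statement of the Proposition.
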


It follows that the topology of the nodal set is entirely determined by the combinatorics of gluing
the sheets along the singular fibers. In fact, the gluing is rather simple and easily yields the following

\begin{theo}\label{NODALCON} For all $m\neq 0$, the nodal set  $\ncal_{\Re \phi_{m,j}}$
is connected. \end{theo}

To count nodal domains, we need to make the assumption that there are just a finite number of zeros of $f_{m,j}$ and that at least one of them is regular.

When the zero set is transverse to the zero section, then the sum of the indices of the zeros
is the first Chern class of $K^m$, and in particular is non-empty when the genus of $X$ is $\not= 0$, i.e., when $X$ is not a torus.

For metrics satisfying Theorem \ref{GENERIC} we prove Theorem \ref{MAIN} by using Proposition \ref{COVER} together with
some geometric observations on how the sheets fit together at the singular fibers. This is done using a Bers type local analysis
of the eigensections (Section \ref{LOCALSECT}) and some geometric/combinatorial arguments in Section \ref{NODALDOMAINSECT1}.

To put the nodal results into context,  it is proved in varying degrees of generality in   \cite{GRS13,JZ13,JZ14, Z17, GRS15,JS,JY,MM}
that in dimension $2$, the number of nodal domains of an orthonormal
basis $\{u_j\}$ of Laplace  eigenfunctions on certain surfaces with ergodic geodesic flow tends to infinity with the eigenvalue along almost the entire sequence
of eigenvalues. By the first item of Theorem \ref{MAIN}, the same is true
for their lifts to the unit  tangent bundle $SX$ as  invariant eigenfunctions of the Kaluza-Klein metric. But for higher weight eigenfunctions, the situation is
virtually the opposite and the number of nodal domains is bounded.
\begin{rema}
Note that the geodesic flow on $P$ with a Kaluza--Klein metric never is ergodic. To see this, observe that the hypersurface
\[
\{(x,v)\in SP ~:~ v\perp \partial_\theta \} \subset SP
\]
is invariant under the geodesic flow, and it divides $SP$ into two subsets of positive measure:
\[
\{(x,v)\in SP ~:~ \langle v, \partial_\theta \rangle_G >0 \} ~\text{ and }~ \{(x,v)\in SP ~:~ \langle v, \partial_\theta \rangle_G <0 \}.
\]
\end{rema}

\subsubsection{Surfaces of constant curvature}
Metrics $g$ of constant curvature with their associated Hermitian metrics
and connections on $K \to X$ are not generic. But they are of special interest, so we comment on what we are able to prove about them. Note
that the standard metric on $\T^3$ is Kaluza-Klein, as is the standard
metric on $S^3$ or $SO(3) = U(S^2)$. The standard metric on the unit
tangent bundle $PSL(2,\R)/\Gamma$ over a hyperbolic surface is Lorentzian, but if one changes the sign of the vertical Laplacian it is also Kaluza-Klein.

Perhaps surprisingly, the results of Theorem \ref{MAIN} are valid for some orthonormal bases of eigenfunctions  on
flat $3$-tori.
\begin{theo}\label{explicit}
On the flat $3$ torus $\mathbb{T}^3$, one can find an orthonormal eigenbasis for which all nonconstant eigenfunctions have two nodal domains.
\end{theo}

Next we turn to hyperbolic metrics on a surface $X$ of genus $g \geq 2$.
 Then
the total space $P_h = PSL(2, \R)/\Gamma$ and the equivariant eigenfunctions
of
the Kaluza-Klein Laplacian $\Delta$ are the same as joint eigenfunctions of the generator $W$ of $K$ and of the Casimir operator
$\Omega$.
When the weight $m$ is fixed, one may separate variables and obtain a Maass Laplacian $D_m$ on smooth sections of a complex line bundle $\pi: K^m \to X$, namely the bundle of $m$-differentials of
type $(dz)^m$, and are the usual weight $m$ automorphic Maass eigendifferentials $f_{m,j}(z)(dz)^m$,
$$D_m f_{m,j}(z) = s(1-s) f_{m,j}(z),$$
of  the Maass Laplacians
$$D_m =  y^2 (\frac{\partial^2}{\partial x^2} + \frac{\partial^2}{\partial y^2}) - 2 i m y\frac{\partial }{\partial x}.$$
Unfortunately, we are not able to verify that (any) Maass eigendifferentials
have $0$ as a regular value, i.e.  the  generic  conditions needed for Theorem \ref{MAIN}. Indeed, we do not know how to prove that (any) eigenfunctions $\phi_{0, j}$  of the hyperbolic Laplacian have a discrete set of critical points, much less that $0$ is a regular value of $d \phi_{0, j}$.

Regarding spherical harmonics on $S^3$ we will be brief, because
we study random equivariant spherical harmonics  in detail in a forthcoming article \cite{JZ2019}. The round metric is a Kaluza-Klein metric, but of course not
a generic one. In Section \ref{SPHERE} we show that the joint eigenfunctions $Y_N^{m_1, m_2}$ of $\Delta$ and of two commuting $S^1$ actions have very different nodal sets from the ones in Theorem \ref{MAIN}. On the other hand, in \cite{JZ2019} we show that `random' linear combinations of such
$Y_N^{m_1, m_2}$  with $m_1 $ fixed do satisfy the results of Theorem
\ref{MAIN} and the nodal sets of their real, resp. imaginary parts, have
just one nodal component. We also find their expected Euler characteristic.
These results were motivated by  the  numerical discovery of Barnett et. al. that  3D  random spherical harmonics on $S^3$ of fixed degree, the nodal set contains one `giant component' and many small components \cite{B18}.
We are fixing the weight $m_1$ and therefore do not work with general random spherical harmonics. But in the $N$ dimensional subspaces where
$m_1 $ is fixed with $|m_1| \leq N, 2 | N-m_1$,  the nodal sets of the real
and imaginary parts are connected and divide $S^3$ into just two components. For further discussion we refer to \cite{JZ2019}.

\section{\label{GEOMSECT} Geometric background}

In this section we discuss the geometric data that goes into the
construction of Kaluza-Klein metrics, which are defined in Definition
\ref{SASAKIDEF}. They are also the data needed to define Bochner
Laplacians $\nabla^* \nabla$ and Kaluza-Klein Laplacians $\Delta_G$.
We plan to vary the data and study perturbation theory of eigenvalues
and eigensections in Section \ref{PERTURBATIONSECT}.

\subsection{Riemannian metrics on $X$ and Hermitian metrics on $L$}

Let $(X, J, g)$ denote a Riemann surface with complex structure $J$ and
Riemannian metric $g$. We  write $g_{1 \bar{1}} = g(\frac{\partial}{\partial z}, \frac{\partial}{\partial z})$ and $g^{1 \bar{1}} = g^*(dz, dz)$, where $g^*$ is the dual metric. The complex structure gives a decomposition of $T^*X \otimes \C = T^{*(1,0)} \oplus T^{*(0,1)}$ into $(1,0)$ resp. $(0,1)$ parts. We denote the
area form of $g$ by $$dA_g = \omega = i g_{1 \bar{1}} dz \wedge d \bar{z}, $$  where the \kahler form $\omega$ is the
$(1,1)$ form defined by $g_J(X, Y) = \omega(J X, Y)$.  Locally there exists
a \kahler potential $\psi$ defined up to a constant by $dd^c \phi = \omega$.
Here $d^c = \frac{1}{2i}(\partial - \dbar)$. Then, $\omega_h = i \ddbar \log h = dd^c \phi = \Delta_g \phi L(dz)$, where $L(dz) = i dz \wedge d\bar{z}$.

\subsubsection{The space of isometry classes of metrics on surfaces}
It is well-known that the space of Riemannian metric tensors on a manifold $X$ splits as a product $\mathrm{Vol}(X) \times \mathrm{Met}_{\mu}(X)$ of volume forms times metrics with a fixed volume form.
Thus, one may separately  consider metrics with a fixed volume form and
conformal classes of metrics.

Choice of a complex structure $J$ on $X$ is equivalent to choice of a conformal class $\mathrm{Conf}(g_0) $
of metrics.
The moduli
space of conformal classes  is the same as the $(3g-3)$-dimensional  moduli space $\mathfrak M_g$ of complex structures
on $X$. In each conformal class, we may pick a background metric $g_0$ and represent other
metrics in the form $$ \mathrm{Conf}(g_0) = \{ e^{2 \sigma} g_0: \sigma \in C^{\infty}(X)\}.$$

 If we  fix  a complex structure $J$, then the Riemannian metrics in the corresponding conformal
 class are \kahler metrics, and may be parameterized  by their K\"ahler potentials $\phi$, where the
area form $\omega_{\phi}$  of the K\"ahler metric is related to that of the reference metric by
$$\omega_{\phi} = \omega_0 + i \ddbar \phi. $$ We let
$$\kcalomega : = \{\omega_{\phi}: = \omega_0 + i \ddbar \phi >0\} $$
which may be identified with an open set in $C^{\infty}(X)$.  The Liouville field and K\"ahler potential
are related by
\begin{equation*}  e^{2 \sigma} = 1 - \half \Delta_0 \phi, \end{equation*}
where $\Delta_0$ is the Laplacian of $g_0$.  The only difference in the two parameterizations
of conformal metrics is that the area of metrics in $\kcalomega$ is fixed while it may vary
in $\mathrm{Conf}(g_0)$. Thus $\mathrm{Conf}(g_0) \cong \kcalomega \times \R$.

In the case of Riemann surfaces, the area form is the symplectic
form associated to a \kahler metric.  Given a complex structure $J$, the  \kahler metric $g_J$ can be recovered from its area form $\omega$
by the formula $g_J(X, Y) = \omega(X, J Y)$. Hence isometry classes of \kahler metrics with a fixed
area form are parameterized by $\mathfrak M_g$.

\subsection{Complex line bundles $L \to X$, connections and curvature}

 If we fix a complex structure $J$ and holomorphic line bundle $L \to X$,
then a Hermitian metric $h$ on $L$ is determined by the length of a local holomorphic frame $e_L$ (i.e., a local holomorphic
 nonvanishing section) of $L$ over an open set $U\subset M$)
by $e^{-\psi} = \|e_L\|_h^2$, where
$\|e_L\|_h=h(e_L,e_L)^{1/2}$ denotes the $h$-norm of $e_L$.

\subsubsection{Connections}
 In the real setting, a connection on a vector bundle $E$ defines a covariant derivative
$$\nabla: C^{\infty}(X, E) \to C^{\infty}(X, E \otimes T^*X). $$

In our complex setting, we assume $L$ is a holomorphic Hermitian  line bundle, i.e., we equip $L$ with a
  complex structure $J_L$, a connection $\nabla$,  and a  Hermitian metric $h$.  In a local frame $e_L$ it is defined by $\nabla e_L = \alpha \otimes e_L$. We consider  several types of compatibility conditions between this data:

  \begin{itemize}

\item An $h$-connection $\nabla_h$ is one compatible with $h$. In a unitary
frame, the connection $1$-form is $i \R$ valued and is denoted by $i \alpha$.  We denote the space of $h$-compatible
connections by $\acal_h$.\bigskip

 \item Or a $J_L$-compatible connection. In a holomorphic frame $e_L$ the connection $1$-form $\alpha$
 is of type $(1,0)$. We denote the space of $J_L$-compatible connections by $\acal_{\C}$. \bigskip

 \item The unique Chern connection $\alpha_{J_L, h}$ which is compatible
 with both $J_L, h$.

 \end{itemize}

 \bigskip

This data induces:
\begin{itemize}

\item The Hermitian metric $h$ induces the principal bundle of $h$-unitary frames $P_h = \{(z, \lambda) \in L^*:
|\lambda|_h =1\}$. \bigskip

\item An $h$-compatible   connection $\nabla \in \acal_h$ induces  a real  $1$-form $\alpha$ on $P_h$.
 \bigskip

\item Connections  $\acal_{\C}$ determine complex-valued $1$-forms on $L^*$.

\end{itemize}

 The connection
1-form in the frame $e_L$ is given by $$\nabla e_L = \alpha \otimes e_L.$$
We denote the $(1,0)$ resp. $(0,1)$ parts of $\nabla $ by $\nabla^{1,0}, $ resp. $\nabla^{0,1}$.

 Suppose that $\nabla \in \acal_{\C}$.  Then
if $s = f e$ with $e$ a local holomorphic frame,
$$\left\{ \begin{array}{l} \nabla^{(1,0)} (fe) = (\partial f + \alpha f) \otimes e, \\ \\
\nabla^{(0,1)} (fe) = \dbar f \otimes e. \end{array} \right.$$

The  holomorphic line bundle $L$ also has a natural Cauchy-Riemann operator,
$$\dbar_L: C^{\infty}(M, L) \to C^{\infty}_1(M, L). $$
In a local holomorphic frame $e$, we write a smooth  section $s =
fe$ and then
$$\dbar_L s = \dbar f \otimes e. $$
It is well-defined since if $e'$ is another holomorphic frame and
$e = g e'$, then $s = f g e'$ and $\dbar_L s = \dbar f \otimes g
e' =\dbar f \otimes e$.

 The Chern
connection $\nabla$ associated to
the Hermitian metric $h$ is the unique metric connection
$$\nabla: C^{\infty}(X, L) \to C_1^{\infty}(M, X \otimes T^*X)$$
whose connection
$1$-form in a holomorphic frame $e_L$ has type $(1,0)$.
 The connection
1-form is given by $\nabla e_L = \alpha \otimes e_L$ with $\alpha = \partial \log |h|$.

The metric $g^*$ is a Hermitian metric on $K$. Any Hermitian metric $h$ on a line bundle $L$ induces metrics $h^m = e^{-m \phi}$ on the
tensor powers $L^m$ in the local frame $e_L^m$.
The Hermitian metric and complex structure determine a Chern connection
$\partial \log h$ whose  curvature 2-form $\Theta_h$ is
given locally by
\begin{equation*}
\Theta_h=-\ddbar
\log\|e_L\|_h^2,
\end{equation*}
and we say that $(L,h)$ is positive if the (real) 2-form $\frac{\sqrt{-1}}{2}\Theta_h$ is positive.

\subsection{Curvature form}

Given a connection $\nabla$ on $L$ and a vector field $V$ on $X$,
the covariant derivative of a section $s$ is defined by $\nabla_V s =
\langle \nabla s, V \rangle$. The curvature is the 2-form $\Omega^{\nabla}$
defined by $\Omega^{\nabla}(V, W) = \lbrack\nabla_V, \nabla_W\rbrack - \nabla_{\lbrack V,W\rbrack }.$
If $e_L$ is a local frame and $\nabla e_L = \alpha \otimes e_L$ then
$\Omega^{\nabla} = d \alpha$.

\subsection{Examples}

\begin{itemize}
\item Let $F^*X$ be the unit co-frame bundle $X$, consisting of orthonormal frames of $T^*X$.
Then $S^m F^* X$ is the bundle of real $m$-differentials, i.e., homogeneous polynomials of degree $m$
in $dx, dy$ or $dz, d \bar{z}$.\bigskip

\item When $X$ is given a complex structure, we may decompose $T^*X \otimes \C = T^{*(1,0)} \oplus
T^{*(0,1)}$ into co-vectors $f dz$ of type $(1,0)$ and $g d\bar{z}$ of type $(0,1)$. The holomorphic
tangent bundle is usually denoted by $K = K_X = T^{*(1,0)}$ and is called the canonical bundle. Its
tensor powers $K^m$ are bundles of differentials of type $f (dz)^m$ with $f(z, \bar{z})$ a smooth function.\bigskip

\item When the genus is  $0$, i.e., $X  = S^2$, $K_X$ is a negative line bundle and has no holomorphic sections.
The associated circle bundle of frames is $SO(3) \cong \R {\mathbb P}^3 = S^3/\pm1$.\bigskip

\item When
the genus is $1$, then $K_X$ and $T^*X$ are trivial and $F_h \cong \T^2 \times S^1$. \bigskip

\item When the genus is $ \geq 1$ we may twist $L$ by a flat line bundle. This is not particularly relevant for this
article except that we usually ignore this additional degree of freedom. There is an ample line bundle $L \to \T^2$ whose holomorphic sections are
theta functions. The associated principal $S^1$ bundle is the reduced Heisenberg group, the quotient
of the simply connected Heisenberg group by the integer lattice \bigskip

\item When the genus is $ \geq 2$ then $X = \mathbb{H}^2/\Gamma$ where $\Gamma \subset PSL(2, \R)$. The associated $S^1$ bundle is $SL_2(\mathbb{R})/\Gamma$.  $K_X$ is ample and for $m$ large
there are many holomorphic sections of $K_X^m$. This is only significant in this article when we discuss
splitting eigenspaces.

\end{itemize}

\subsection{Canonical bundle and $m$-differentials}

Let $K \to X$ denote the canonical bundle $T^{*(1,0)} X$ of $(1,0)$ forms
$ f dz$.  Up to twisting by a flat line bundle, it is the unique ample line bundle
on $X$. Hence there exists a Hermitian metric $h$ on $K$ with curvature
form $i \ddbar \log h = \omega$. This should be distinguished from the
curvature of $g$, which is given by $$dd^c \log g^{1 \bar{1}} = K \omega_h, \; \mathrm{where\; K \; is\; the\; scalar\; curvature}.$$ In terms of the Hermitian metric  $h = e^{- \phi_0}$ on $K$,  $|dz|_g = e^{- \phi_0} = g^{1 \bar{1}}$. Also,
$$ \partial_z \log \omega_0 = \partial_z \log (1 - \Delta_0 \phi), \; \omega_{\phi} = (\omega_0 + dd^c \phi) =( (1 - \Delta_{0}) \phi) \omega_0.$$

We now regard $g(X, Y) = \omega(X, JY)$ as a \kahler metric.
The co-metric $^*$ defines  metric coefficients on $T^*X \otimes \C$ by extending $g^*$ by complex linearity
and induces the Hermitian metric,
$$\|d z\|_{g^*} = g^{1 \bar{1}},$$ on $K_X$.
The curvature $(1,1)$ form is therefore $\ddbar \log g^{1 \bar{1}}$.

The associated $(1,1)$ form $\omega_h$ is positive if the genus is $\geq 2$ and if $g$ is a metric of negative
curvature $K$. It is negative if the genus is $0$ and the metric $g$ is of positive curvature. When the genus is $0$ there
do not exist Hermitian metrics with strictly positive or negative curvature forms.

When $\dim X =2$ we write the area form as $dA_g = \sqrt{g} dx$ or as
$\omega_g$. The metric $g$ induces metrics $g$ on $TX, T^*X,  T^{*(1,0)}X = K_X$
and on powers such as $K^m$. On $K^m$, the Hermitian metric induced by
$g$ is $\|dz\|_g^2 = e^{-\phi} = g^{1 \bar{1}}$ so $\phi = - \log g^{1 \bar{1}}$.
The Chern connection on $K$ is the same as the Riemannian connection.

Consider the complex line vector bundles $T^{1,0}_X$ and $(TX, J)$. They
are isomorphic under the map
$$\xi: T_X \to T_X^{1,0}, \;\; v \to \half(v - i J v). $$

\begin{lemm} Let $(X, g)$ be a \kahler manifold. Under the isomorphism $\xi \in T_X \to T_X^{1,0}$, the Chern connection $D$ on the holomorphic tangent bundle $T^{1,0}$
is the Levi-Civita connection $\nabla$. \end{lemm}

\subsection{\label{SPECIALC} Orthonormal frame bundles and $m$-differentials}

If $(X,g)$ is a Riemannian surface, and $F $ is the $SO(2)$-bundle of orthonormal frames of $T^*X$, then
$g$ determines a Riemannian connection on $F$. Similarly, if we fix a complex structure
and define  the principal
$S^1$ bundle $F_h \subset TX$ associated to  $K_X = T^{*(1,0)}$, then $g$ determines a Hermitian
metric on $K_X$. We first discuss the real geometry and then the complex geometry.

The metric $g = \sum_{i,j =1}^2 g_{ij} dx_i \otimes dx_j$ on $TX$ induces a co-metric $g^*$ on $T^*X$, usually denoted by raised indices. It then induces metrics $g^{\otimes n}$ on powers $S^mT^* X$.

There always exists a basis of {\it basic}  or horizontal $1$-forms at a frame
$(\mu_1, \mu_2)$  such that
\[
(\omega_j)_{x, \mu_1, \mu_2} (v) = \mu_j(\pi_* v).
\]

The  Riemannian  connection $1$-form $\alpha$ is defined by the equations,
$$\left\{ \begin{array}{l} \langle \alpha, \frac{\partial}{\partial \theta} \rangle =1, \\ \\
d \omega_1 = \alpha \wedge \omega_2, \\ \\
d \omega_2 = - \alpha \wedge \omega_1, \\ \\
d \alpha = K \omega_1 \wedge \omega_2, \end{array}\right. $$
where $K$ is the scalar curvature. Dually, there exist vector fields $\xi_1, \xi_2$ so that
$$\left\{ \begin{array}{l}[\frac{\partial}{\partial \theta}, \xi_1]= \xi_2 \\ \\

[\frac{\partial}{\partial \theta}, \xi_2]= - \xi_1 , \\ \\

[\xi_1, \xi_2]=  K \frac{\partial}{\partial \theta}  \end{array}\right. $$

Then define
$$E^{\pm} = \half (\xi_1 \mp \xi_2). $$
Then,$$\left\{ \begin{array}{l}[\frac{\partial}{\partial \theta}, E^+]= E^+\\ \\

[\frac{\partial}{\partial \theta}, E^-]= - E^- , \\ \\

[E^+, E^-]= \frac{i}{2}  K \frac{\partial}{\partial \theta}  \end{array}\right. $$

In the frame $\{\xi_1, \xi_2, \frac{\partial}{\partial \theta}\}$ the volume form is $\omega_1 \wedge \omega_2 \wedge \alpha$.
The vector fields $\xi_1, \xi_2$ are also of unit length since they are defined to be horizontal lifts of a unit frame.

 \subsection{ Hilbert spaces of sections}

Let $(L,h) \to X$ be a Hermitian holomorphic line bundle. We thus have a pair
of metrics, $h$ resp. $g$ (with \kahler form $\omega_{\phi}$) on $L$ resp. $TX$.

To each pair $(h,g)$ of metrics we associate Hilbert space inner products $\mathrm{Hilb}_m(h,g)$ on
sections
$s \in L^2_{m \phi}(X, L^m)$ of the form
$$\|s\|_{h^m}^2 : = \int_X |s(z)|^2_{h^m} \omega_{g}, $$
where $ |s(z)|^2_{h^m}$ is the pointwise Hermitian norm-squared of
the section $s$ in the metric $h^m$. In a local holomorphic frame $e_L$, we write
$$\|e_L\|_h^2 = e^{- \psi}. $$
 In local coordinates $z$ and the local frame $e_L^m$
of $L^m$, we may write $s = f e_L^m$ and then
\[
|s(z)|^2_{h^m} = |f(z)|^2 e^{- m \psi(z)} \|e_L\|_{h_0}^{2m}.
\]
Henceforth we write
$$\| f e_L^m\|_{h^m}^2 : = \int_X |f(z)|^2e^{-m \psi} \|e_L\|^{2m}_{h_0}\omega_{\phi}. $$
Locally we may also write $ \|e_L\|^2_{h_0} = e^{-\psi_0}$.

In the special case where $L = K_X$, we may use the frame $dz$ in a  local holomorphic
coordinate $z$. In the  local frames $(dz)^m$
of $K^m$ we may write sections as $s = f (dz)^m$ and then $ |s(z)|^2_{h^m} = |f(z)|^2 e^{- m \psi(z)} \|dz\|_{h_0}^{2m}$ and then,
$$\| f (dz)^m\|_{h^m}^2 : = \int_X |f(z)|^2e^{-m (\psi_0 + \psi)}d A_g, $$
where $dA_g = \omega_{\phi}$ is the area form of $g$.

\section{\label{BOCHNERSECT} Bochner Laplacians on line bundles}

In this section, we give explicit local formulae for Bochner Laplacians
 Bochner Laplacians $\nabla^*_{h,g} \nabla$ on $L^2(X, L)$
equipped with the data
\[
(g, h, J, J_L, \nabla),
\]
where $(L, h) \to X$ is a Hermitian holomorphic  line bundle, $g$ is a metric
on $X$, $\nabla$ is a connection on $L$. In a local frame $e_L$ of $L$, with $s = f e_L$, the inner product  $\mathrm{Hilb}(g,h)$ on $L^2(X, L)$ takes the
form,
$$\|s \|^2_{\mathrm{Hilb}(g,h)} = \int_X |f|^2 e^{-\psi} dV_g, \;\; (\mathrm{where}\; \|e_L(z)\|_h^2 = e^{-\psi(z)}).$$
The inner product on $L^2(X, L \otimes T^* X)$ has the form,
$$\|s\otimes \eta \|^2_{\mathrm{Hilb}(g,h)} = \int_X |f|^2 \|\eta\|_g^2 e^{-\psi} dV_g, \;\; (\mathrm{where}\; \|e_L(z)\|_h^2 = e^{-\psi(z)}).$$
With no loss of generality, we fix $J$ on $X$ and assume that $(g, J, \omega)$ is a \kahler metric with $g(X, Y) = \omega(X, J Y)$.
Then  $g(\frac{\partial}{\partial z}, \frac{\partial}{\partial z} ) = 0 = g(\frac{\partial}{\partial \bar{z}}, \frac{\partial}{\partial \bar{z}})= 0$. There is only one metric coefficient, $g^{1 \bar{1}} = G(dz, d \bar{z})$. It is a Hermitian metric on $T^{1,0} X$
    and is compatible with $J$. \footnote{Although $\dim M =2$, we use
 the notation $dV_g$ and the term `volume form' to avoid clashing with the notation $A$ for connections and area form.} We also
 denote the Riemannian volume form by $dV_g = \omega = dd^c \log g^{1 \bar{1}}$.

\begin{rema} Notational remark: We use $G$ rather than $g^{-1}$ or $g^*$ for the dual co-metric on $1$-forms, because it is a convenient
notation for later variations.  \end{rema}

   The Bochner Laplacian is the Laplacian on $L^2(X, L)$
determined by the quadratic form,
\begin{equation}\label{q} q_{g, h, \nabla}(s) = \int_X |\nabla  s|^2_{h \otimes g} dV_g =\langle \nabla_{g,h}^* \nabla s,s \rangle_{\mathrm{Hilb}(g,h) }.  \end{equation}
Throughout we assume that $g$ is $J$-compatible. In a local frame $e_L$ of $L$, with $s = f e_L$, $\nabla (f e_L) = (df + f \alpha)\otimes e_L$
 and with $\|e_L(z)\|_h^2 = e^{- \psi(x)}$,
 and the quadratic form is given by
 \begin{equation*} q_{g, h, \nabla}(f e_L) = \int_X |df + f \alpha|^2_{ g} e^{-\psi} dV_g.  \end{equation*}
 The adjoints are  taken with respect to the volume form $e^{-\psi} dV_g$.

 We give local formulae for  $\nabla^*_{h,g} \nabla$ under several assumptions on $\nabla$ and in correspondingly adapted
 frames (equivalently, choosing a gauge for $\nabla$):

  \begin{itemize}
  \item[(i)]  $\nabla$ is $h$-compatible (see Section \ref{UNITSECT}); in this case, we compute in  a local unitary frame.
  Fixing $h$ is equivalent to fixing the principal $S^1$ bundle $P_h \to X$, and varying the connection $1$-forms
  $\alpha \in \acal_h$  on $P_h$ (with fixed $g$). \bigskip

  \item[(ii)] $\nabla \in \acal_{\C}$ is compatible with a  fixed complex structure $J_L$ on $L$ (see Section \ref{HOLOSECT}); in this
  case we compute in  a local holomorphic frame. In the next section we fix $J_L$ and vary $h$ (with fixed $g$).    \bigskip

  \item[(iii)] $\nabla$ is compatible with both $(h, J_L)$, hence is the Chern connection; see Section \ref{CHERNSECT}.
  The $(1,0)$-part of the connection has the form $\alpha =\partial \log h$ and is parameterized by the Hermitian metric $h$ on $L$;  in the next section we consider its
  variation with $h$ (with fixed $g$).\bigskip

  \item[(iv)] When $L = K_X $ is the canonical line bundle, we let $h$ be the Hermitian metric induced by $g$ and
  let $\nabla$ be the Levi-Civita connection. This is a special case of an $h$-compatible connection but is special
  because $h$ is induced by $g$. Moreover, the Riemannian connection w.r.t. $g$ is the Chern connection for the Hermitian metric $g$.
    In a holomorphic frame $dz$ the connection form is $\partial \log g^{1 \bar{1}} = \partial \log G(dz, d \bar{z})$. Also,
 $dV_g = g^{1 \bar{1}} dz \wedge d \bar{z}$.   In the next section, we vary this connection by varying $g$ on $X$.

   \end{itemize}
 \bigskip


There exist many formulae for Bochner Laplacians in the literature (see for instance \cite{D}), but they often make assumptions on the compatibility of the connection
with other data (the Hermitian metric or complex structure) and  we need explicit dependence on the compatibility
conditions so that  we can perturb some of the data while holding others fixed. We therefore go through the calculations with explicit assumptions on the compatibility
of $\nabla$ with the data  $(g, h, \nabla, J, J_L). $

We also recall the general identities,
 $d^*(f \alpha) = - * d * (f \alpha) = - * d (f* \alpha) = - * df \wedge * \alpha + f d^*\alpha.$ Note that $G (\eta, \zeta)\omega = \eta \wedge * \zeta$.
Hence, $- * df \wedge * \alpha = -* G(df, \alpha) \omega= - G(df, \alpha)$ since $* \omega = 1$.

\subsection{\label{UNITSECT}Calculation in a unitary frame}
In this section we assume that $\nabla$ is compatible with $h$.
  We recall from Section \ref{GEOMSECT} that on  a Hermitian  line bundle $(L,h)$, the set  $\acal_h$ of connections on $L $
which are compatible with the Hermitian metric is the affine space $\{A_{\alpha} = A_0 + \alpha: \alpha \in \Omega^1(X)\}$ where $A_0$ is a fixed background
connection and $\Omega^1(X)$ are the real $1$-forms on $X$.  The Hermitian metric determines the principal $U(1)$
bundle $P_h$ of unitary frames of $L$ and as before $A_1$ determines a connection $1$-form $\alpha_1$ on $P_h$. On the base $X$, the connection $1$-form is $i \R$-valued
in a unitary frame and we write it as $i \alpha$ with a real-valued $\alpha$.

\begin{prop} \label{BOCHUNIT}Let $(X, g)$ be a Riemannian manifold and let $(L, h)$ be a
 Hermitian line bundle with $h$-compatible connection $\nabla$. Let
$\nabla (f e_L) =(d f + i f\alpha)\otimes e_L$ with $\alpha \in  \R$ in a unitary frame $e_L$. Then,
  $$  \nabla^* \nabla (f e_L)  = \left(- \Delta_g f   - 2 i  G(df, \alpha)   + if d_g^*\alpha + G(\alpha, \alpha)  f\right) e_L.$$
  where $\Delta_g $ is the scalar Laplace operator.
\end{prop}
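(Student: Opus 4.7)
The plan is to derive the formula from the defining quadratic form \eqref{q}, using that in a unitary frame $\|e_L\|_h^2 \equiv 1$ (so $\psi\equiv 0$ and the reference volume is simply $dV_g$), and then to integrate by parts and read off $\nabla^*\nabla$ from the resulting pointwise expression. Concretely, I would first compute
\[
|\nabla(fe_L)|^2_{h\otimes g} \;=\; |df + if\alpha|^2_g \;=\; G(df + if\alpha,\, d\bar f - i\bar f\alpha),
\]
using that $\alpha$ is real and extending $G$ complex-bilinearly. Expanding yields four pointwise terms: $|df|^2_g$, the two cross terms $-i\bar f\,G(df,\alpha)$ and $if\,G(\alpha,d\bar f)$, and $|f|^2 G(\alpha,\alpha)$.

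The next step is to integrate each term against $dV_g$ and rewrite the result as $\int_X \bar f \cdot (\text{operator applied to } f)\, dV_g$, which by the definition of $\nabla^*\nabla$ identifies the operator. The quadratic term $\int_X |df|^2_g\,dV_g$ is the Dirichlet form of $\Delta_g$ and contributes $-\Delta_g f$ (with the sign convention of the statement). The potential term $|f|^2 G(\alpha,\alpha)$ requires no work. The only real calculation is on the cross terms: using the standard identity $d^*(f\alpha) = f\,d_g^*\alpha - G(df,\alpha)$, one rewrites
\[
\int_X if\,G(\alpha, d\bar f)\,dV_g \;=\; i\int_X |f|^2 d_g^*\alpha\, dV_g \;-\; i\int_X \bar f\, G(df,\alpha)\, dV_g,
\]
and adding the other cross term produces $-2i\int_X \bar f\, G(df,\alpha)\, dV_g + i\int_X |f|^2 d_g^*\alpha\, dV_g$. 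Collecting everything exhibits the integrand as $\bar f$ times precisely the expression in the statement, which identifies $\nabla^*\nabla(fe_L)$.

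The main obstacle, as usual with this kind of calculation, is bookkeeping: keeping the complex conjugates, the factor of $i$, and the real versus Hermitian extension of $G$ straight. I would handle this by fixing once and for all the convention that $G$ is the complex-bilinear extension of the dual Riemannian metric to $T^*X\otimes\C$, so that $|\omega|^2 = G(\omega,\bar\omega)$ for complex $1$-forms $\omega$, and that $\int_X |df|^2_g\, dV_g = -\int_X \bar f\,\Delta_g f\, dV_g$. With those conventions nailed down, the computation above is purely mechanical, and the fact that the connection is $h$-compatible enters only through the reality of $\alpha$ and the triviality of the weight $e^{-\psi}$ in the unitary frame.
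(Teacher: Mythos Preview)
Your proposal is correct and follows essentially the same approach as the paper: start from the quadratic form \eqref{q}, expand $|df+if\alpha|^2_g$ Hermitian-bilinearly, integrate by parts using the identity $d_g^*(\bar f\alpha)=\bar f\,d_g^*\alpha-G(d\bar f,\alpha)$, and read off the operator. The only cosmetic difference is that the paper keeps the weight $e^{-\psi}$ through the calculation before setting $\psi=0$, whereas you drop it at the outset; mathematically the two are identical.
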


\begin{proof}  In a unitary frame,  $|e_L|_h^2 = e^{-\psi}=1$ and this
factor drops out. We leave it in until the last step for purposes of later comparison to other frames. Since
$\nabla (f e_L) =  df \otimes e_L + i f \alpha e_L$,  and by  \eqref{q},
\[
 q_{g, h, \nabla}(s)  = \int_X |d f + i f\alpha |_g^2 e^{-\psi}   dV_g.
\]
 Note that
\[
|d f + i f\alpha|^2_{ g} = G(df + i f \alpha, \overline{df + i f \alpha}) =  |d f|^2_g  +  2 \Re \bar{f }  G(d f, -i \alpha) + G(\alpha, \alpha) |f|^2
\]
is the Hermitian norm-squared, so
\begin{align*}
&q_{g, h, \nabla}(s)
 =&  \int_X \left( |d f|^2_g  +  2 \Re \bar{f }  G(d f,- i \alpha) +   G(\alpha, \alpha) |f|^2\right) e^{-\psi}   dV_g\\
  =&   \int_X \left( -2 \Re \bar{f } G(d f, i\alpha) +  G(\alpha, \alpha) |f|^2\right) e^{-\psi}   dV_g - \int_X \bar{f} d_g^*(e^{-\psi}df) dV_g\\
 = &  \int_X \left( -2 \Re \bar{f }  G(d f, i \alpha) + G(\alpha, \alpha) |f|^2\right)  dV_g  - \int_X \bar{f} (d_g^* df)  dV_g,
\end{align*}
   where in the last line we use that $\psi = 0$ in a unitary frame. Since $\alpha$ is real-valued,
\[
-2\Re \bar{f }  G(df, i \alpha) = -i( \bar{f }  G(df, \alpha) -  f G(d \bar{f}, \alpha) ).
\]
   Recall that $d_g^*(f \alpha) = - * d * (f \alpha) = -  G(df, \alpha) + f d_g^*\alpha$. Replacing
   $ i G(d \bar{f}, \alpha)$ by $-i \left(d_g^*(\bar{f} \alpha)  - \bar{f} d_g^*\alpha\right)$ and integrating the $d_g^*$ by parts gives
\begin{multline*}
\int_X \left( - i  \bar{f }  G(df, \alpha) -i  \bar{f} G(df,  \alpha)   + i |f|^2 d_g^*\alpha + G(\alpha, \alpha) |f|^2\right)\;  \;  dV_g\\
 - \int_X \bar{f} (d_g^* df)  dV_g,
\end{multline*}
   Thus, we get
\[
\nabla^* \nabla f = - \Delta_g f   - 2 i  G(df, \alpha)   + if d_g^*\alpha + G(\alpha, \alpha)  f. \qedhere
\]

 \end{proof}

\subsection{\label{HOLOSECT} Holomorphic line bundles: $J_L$-compatible connections. }

In this section we give a local formula for the Bochner Laplacian when $L \to X$ is a holomorphic line bundle and $\nabla$ is compatible
with the complex structure. Thus, complex
structures $J$ on $X$ and $J_L$ on $L$ are fixed. In a holomorphic frame,   $\|e_L \|= e^{-\psi} \not=1$ and $\nabla e_L = \alpha \otimes e_L$,
where  $\alpha $ is
of type $(1,0)$. We write $\nabla = \partial^{\nabla} + \dbar^{\nabla}$ for the decomposition of a connection into its $(1,0)$ resp. $(0,1)$ parts,
with $ \partial^{\nabla}  = \nabla^{(1,0)}, \dbar^{\nabla} = \nabla^{(0,1)}.$

 The Bochner-Kodaira identity relates $\nabla^* \nabla$ to $\dbar_L^* \dbar_L$, where
$$\left\{ \begin{array}{l} \partial_L (fe_L) : =\nabla^{(1,0)} (fe_L) = (\partial f +\alpha  f) \otimes e_L, \\ \\\dbar_L(fe_L): =
\nabla^{(0,1)} (fe_L) = \dbar f \otimes e_L. \end{array} \right.$$

The analogue of Proposition \ref{BOCHUNIT} is

    \begin{prop} If $\nabla$ is compatible with $J_L$ with connection $1$-form $\nabla e_L = \alpha \otimes e_L$ with
    $\alpha$  of type $(1,0)$ in the holomorphic frame $e_L$, then
     $$  \nabla^* \nabla (f e_L) = \left(  - \Delta_g f   + G(d \psi  + \alpha, df)
  - f G(d  \psi, \alpha)    + f d^*\alpha + G(\alpha, \bar{\alpha}) f
     \right) e_L.$$
     \end{prop}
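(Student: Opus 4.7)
The plan is to mimic the derivation of Proposition~\ref{BOCHUNIT}, adapted to a holomorphic frame where $\|e_L\|_h^2 = e^{-\psi} \not\equiv 1$ and the connection $1$-form $\alpha$ is complex of type $(1,0)$ rather than purely imaginary. Starting from
\[
q_{g,h,\nabla}(fe_L) = \langle \nabla^*\nabla(fe_L),\, fe_L\rangle_{\mathrm{Hilb}(g,h)} = \int_X |df + f\alpha|_g^2\, e^{-\psi}\, dV_g,
\]
I would expand the pointwise norm via the Hermitian extension $|\eta|_g^2 = G(\eta,\bar\eta)$ of the complex-bilinear co-metric:
\[
|df + f\alpha|_g^2 = G(df, d\bar f) + \bar f\, G(df, \bar\alpha) + f\, G(\alpha, d\bar f) + |f|^2\, G(\alpha, \bar\alpha),
\]
and integrate by parts term by term to put the integrand into the canonical form $\bar f \cdot (Lf)\, e^{-\psi}$, which identifies $L = \nabla^*\nabla$ by the polarization identity for the quadratic form.

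The key input is the $e^{-\psi} dV_g$-adjoint of $d$ on $1$-forms, namely $d^*_\psi \eta = d^*\eta + G(d\psi,\eta)$, which arises from $d^*(e^{-\psi}\eta) = e^{-\psi}(d^*\eta + G(d\psi,\eta))$ and produces every $G(d\psi,\cdot)$ contribution in the final formula. Using this, the four summands transform as follows. The first summand gives the principal part $\bar f\,(-\Delta_g f + G(d\psi, df))$ after moving $d$ off $d\bar f$ and using $d^* df = -\Delta_g f$. The second summand is already of the form $\bar f\cdot(\text{op})\,f$ and contributes $G(df,\bar\alpha)$ directly. The third summand $\int G(f\alpha, d\bar f)\, e^{-\psi}\, dV_g$, after applying $d^*_\psi$ together with the product rule $d^*(f\alpha) = f\, d^*\alpha - G(df,\alpha)$, contributes $f\, d^*\alpha - G(df,\alpha) + f\, G(d\psi,\alpha)$. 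The fourth summand is algebraic and contributes $G(\alpha,\bar\alpha)\, f$. Summing and collecting the coefficients of $df$, of $f$, and of $f\, d^*\alpha$ then yields the stated formula.

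The main obstacle is purely bookkeeping: unlike the unitary case where the connection form $i\alpha$ gave a clean cancellation between complex-conjugate pairs, here conjugation of a $(1,0)$-form $\alpha$ produces the distinct $(0,1)$-form $\bar\alpha$, so the cross terms do not pair as neatly and one must keep careful track of the symmetry of $G$ versus the placement of conjugation. As a sanity check I would verify that (a) specializing to a gauge transformation back to a unitary frame recovers Proposition~\ref{BOCHUNIT}, and (b) the resulting operator is symmetric on $L^2(X,L)$ with principal symbol $|\xi|_g^2$.
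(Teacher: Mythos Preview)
Your approach is essentially identical to the paper's: both start from the quadratic form $q_{g,h,\nabla}(fe_L)=\int_X |df+f\alpha|_g^2\,e^{-\psi}\,dV_g$, expand $|df+f\alpha|_g^2=G(df+f\alpha,\overline{df+f\alpha})$ into the same four summands, and integrate by parts term-by-term using the product rule $d^*(u\alpha)=u\,d^*\alpha-G(du,\alpha)$ together with the $e^{-\psi}$ weight; your packaging of the latter as a single weighted adjoint $d^*_\psi$ is purely cosmetic. One caveat worth flagging: your correctly computed third-term contribution $f\,d^*\alpha-G(df,\alpha)+fG(d\psi,\alpha)$, when summed with the others, produces $G(d\psi+\bar\alpha-\alpha,df)$ and $+fG(d\psi,\alpha)$ rather than the $G(d\psi+\alpha,df)$ and $-fG(d\psi,\alpha)$ in the displayed statement---the paper's own proof in fact arrives at $G(d\psi+\bar\alpha-\alpha,df)$ as well, so these discrepancies are typos in the statement rather than a flaw in your method.
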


\begin{proof} The proof is similar to that of Proposition \ref{BOCHUNIT}, with two differences: (i) We use a holomorphic frame
rather than a unitary frame and $|e_L|_{h}^2 = e^{-\psi}$ is not equal to $1$; (ii)  $\alpha$ is of type $(1,0)$ rather being
$i \R$-valued.  Note that $$ |d f +  f\alpha|^2_{ g} = G(df +  f \alpha, \overline{df +  f \alpha}) =  |d f|^2_g  +  2 \Re \bar{f }  G(d f, \bar{ \alpha}) + G(\alpha, \bar{\alpha}) |f|^2$$

  By \eqref{q}, and integrating
by parts the $|df|^2_g$ term, and with $|\cdot|^2$ denoting the Hermitian metric,  we get
\begin{align*}
 &q_{g, h, \nabla}(s) \\
 = &
 \int_X |d f + f\alpha |_g^2\; e^{-\psi} \;  dV_g \\
  = & \int_X \left( |d f|^2_g  +  2 \Re \bar{f }  G(d f,  \bar{\alpha}) + G(\alpha, \bar{\alpha})  |f|^2\right)\; e^{-\psi} \;  dV_g\\
  = &  \int_X \left( 2 \Re \bar{f } G(d f, \bar{\alpha}) + G(\alpha, \bar{\alpha}) |f|^2\right)\; e^{-\psi} \;  dV_g
 - \int_X \bar{f} d_g^*(e^{-\psi}df) dV_g \\
  = &  \int_X \left( 2 \Re \bar{f }  G(d f,  \bar{\alpha}) + G(\alpha, \bar{\alpha}) |f|^2\right)\; e^{-\psi} \;  dV_g
 - \int_X \bar{f} (d_g^* df - G(d \psi, df)) e^{-\psi} dV_g
\end{align*}

Further,
\[
2\Re \bar{f }  G(df,  \bar{\alpha}) =   \bar{f }  G(df, \bar{\alpha}) +  f G(d \bar{f}, \alpha).
\]
   We simplify the $f G(d \bar{f}, \alpha) $ term using
   that $d^*(\bar{f} \alpha) = - * d * (\bar{f} \alpha) = -  G(d\bar{f}, \alpha) + \bar{f} d^*\alpha$ so that
   $ G(d \bar{f}, \alpha) =- d^*(\bar{f} \alpha)  +  \bar{f} d^*\alpha.$
Integrating the $d^*$ by parts gives
\begin{align*}
  &\int_X \left( 2 \Re \bar{f }  G(d f,  \bar{\alpha}) + G(\alpha, \bar{\alpha}) |f|^2\right)\; e^{-\psi}   dV_g\\
    = & \int_X \left(   \bar{f }  G(df, \bar{\alpha}) +  f G(d \bar{f}, \alpha) + G(\alpha, \bar{\alpha}) |f|^2\right) e^{-\psi} \\
   =&  \int_X \left(  \bar{f }  G(df, \bar{\alpha}) - f d^*(\bar{f}\alpha)   + |f|^2 d^*\alpha +G(\alpha, \bar{\alpha}) |f|^2\right)   e^{-\psi}dV_g\\
    =&  \int_X \left( \bar{f }  G(df, \bar{\alpha}) - \bar{f} G(df, \alpha) - |f|^2 (G(d \psi, \alpha)       + |f|^2 d^*\alpha +G(\alpha, \bar{\alpha}) |f|^2\right)   e^{-\psi}dV_g.
\end{align*}

   Combining with the term $ - \int_X \bar{f} (d_g^* df - G(d \psi, df)) e^{-\psi} dV_g $,  we get
 \[
 \nabla^* \nabla f = - \Delta_g f   + G(d \psi  + \bar{\alpha} - \alpha, df) - f G(d  \psi, \alpha)    + f d^*\alpha + G(\alpha, \bar{\alpha}) f \qedhere
 \]
 \end{proof}

 \subsection{\label{CHERNSECT} Chern connection}
 In this section we assume $\nabla$ is both $h$-compatible and $J_L$-compatible, i.e., that
 it is the Chern connection with connection $1$-form $\partial \psi$.
 One can then compute   $\nabla^*\nabla$ using the  relation
 \begin{equation}\label{BOCHKOD} \nabla^* \nabla = 2 \dbar^{\nabla *} \dbar^{\nabla} + i *d \alpha \end{equation} between
the Kodaira and Bochner Laplacians.

Note that $d \psi$ is real and $\partial \psi = \alpha$, so $d \psi = \alpha + \bar{\alpha}$ and
$G(d \psi + \alpha - \bar{\alpha}, df) = G(\alpha, \dbar f)$ above. Also, $G(d \psi, \alpha) =G(\bar{\alpha}, \alpha)$,
so the terms $- f G(d\psi, \alpha) + G(\alpha, \bar{\alpha}) f$ cancel and from the preceding Proposition we
get $$\nabla^* \nabla (f e_L) = -\Delta_g f + G(\alpha, \dbar f) + f d_g^* \alpha. $$

We now prove this directly.

\begin{prop} \label{CHERN} Let $\nabla$ be the Chern connection for $(L,h)$. Then,
$$\nabla^*\nabla (f e_L)=  (-\Delta_g f +  G(\partial \psi, \dbar f) +  f (i * \Omega^{\nabla} ))e_L. $$

\end{prop}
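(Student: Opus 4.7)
The plan is to derive the claimed identity in either of two complementary ways; I would present the second as the ``direct'' proof (since it avoids invoking the Bochner--Kodaira identity) but sketch the first as a sanity check.

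Route 1: apply the Bochner--Kodaira identity \eqref{BOCHKOD}, namely $\nabla^* \nabla = 2\dbar^{\nabla*}\dbar^{\nabla} + i * d\alpha$. Because $\nabla$ is the Chern connection, $\nabla^{(0,1)} = \dbar^{\nabla}$ kills the holomorphic frame, so $\dbar^{\nabla}(f e_L) = \dbar f \otimes e_L$ in the holomorphic gauge. Pairing against a test section and integrating by parts in $\mathrm{Hilb}(g,h)$ (whose density is $e^{-\psi} dV_g$) produces two contributions: the scalar Laplacian $-\Delta_g f$ from differentiating $f$, and a $G(\partial \psi, \dbar f)$ correction coming from the derivative of the weight $e^{-\psi}$, since $\partial \psi = \alpha$ is of type $(1,0)$ and only the $(0,1)$-part $\dbar f$ of $df$ pairs non-trivially with it under $G$. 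The remaining term $i * d\alpha$ is the curvature term $i * \Omega^{\nabla}$ by definition.

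Route 2 (``direct''): start from the previous proposition applied in the holomorphic frame with $\alpha = \partial \psi$, so that $\bar\alpha = \dbar\psi$ and $d\psi = \alpha + \bar\alpha$. The first-derivative term $G(d\psi + \bar\alpha - \alpha, df)$ then collapses to $2 G(\bar\alpha, df) = G(\alpha, \dbar f)$ after using the type decomposition of $df$ and the bilinearity of $G$ (since $G$ vanishes on pairs of $1$-forms of the same type). Next, the identity $G(d\psi, \alpha) = G(\bar\alpha, \alpha) = G(\alpha, \bar\alpha)$ forces the two zeroth-order terms $-fG(d\psi,\alpha)$ and $G(\alpha, \bar\alpha) f$ to cancel. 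What remains is $-\Delta_g f + G(\partial\psi, \dbar f) + f d_g^* \alpha$.

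The final step is the identification $d_g^* \alpha = i * \Omega^{\nabla}$, which is where the real dimension $2$ enters. On a Riemann surface the Hodge star acts on $(1,0)$-forms by multiplication by $-i$ (since $*dz = -i\, dz$), so for the $(1,0)$-form $\alpha$ one computes
\begin{equation*}
d_g^* \alpha \;=\; -\!*\!d \!*\! \alpha \;=\; -\!*\!d(-i\alpha) \;=\; i\!*\!d\alpha \;=\; i\!*\!\Omega^{\nabla}.
\end{equation*}
Substituting this back yields the stated formula. The main obstacle, and the only place care is required, is bookkeeping with the $G$-pairing on complexified $1$-forms (i.e., the vanishing of $G$ on same-type pairs) and the sign of the Hodge star on $(1,0)$-forms in two real dimensions; everything else is a routine specialization of the holomorphic-compatible formula already established.
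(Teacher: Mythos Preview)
Your proposal is correct and mirrors the paper's own treatment: the paper likewise presents both routes—first specializing the preceding holomorphic-frame proposition (your Route~2, appearing as the pre-proposition paragraph) and then computing $\dbar^{\nabla*}\dbar^{\nabla}$ via the Bochner--Kodaira identity (your Route~1, the formal proof)—with the only cosmetic difference that the paper labels the Bochner--Kodaira argument the ``direct'' one rather than the sanity check. Your explicit Hodge-star verification that $d_g^*\alpha = i*\Omega^{\nabla}$ via $*dz = -i\,dz$ is a nice touch; in the paper this identification is deferred to a post-proposition consistency remark.
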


\begin{proof}

  Using \eqref{BOCHKOD} and $\dbar^{\nabla}(f e_L) = (\dbar f \otimes e_L)$, we have

 $$\begin{array}{lll} \langle \dbar_L^* \dbar_L s, s \rangle_h &&:= \langle \dbar_L s, \dbar_L s \rangle_{h \otimes g}. \end{array} $$

We rewrite $ G(\dbar f, \partial f)  $ term using
   that $d^*(\bar{f} \partial f) = - * d * (\bar{f} \partial f) = -  G(d\bar{f}, \partial f) + \bar{f} d^*\partial f$ so that
   $ G(\dbar f, \partial f) =- d^*(\bar{f} \partial f)  +  \bar{f} d^*\partial f.$
Integrating the $d^*$ by parts gives
\begin{align*}
\int_X G(\dbar f, \partial f) e^{-\psi} \omega_g  &=   \int_X ( - d^*(\bar{f} \partial f)  +  \bar{f} d^*\partial f) e^{-\psi} \omega_g\\
 &=  \int_X \left( G( \partial f, d \psi) - \Delta f) \right) \bar{f} e^{-\psi} \omega_g  .
\end{align*}
Adding the curvature term adds $ f(i * \Omega^{\nabla})$.\
\end{proof}

\begin{rema} Proposition \ref{CHERN} and \eqref{BOCHKOD} are consistent by the following
calculation: If $\alpha = \partial \psi$ is a Chern connection $1$-form, then $$(d^*_g \alpha) \omega_g
= \Omega^{\nabla}= i\ddbar \psi = (\Delta_g \psi) \omega_g,\;\; d^*_g \alpha = \Delta_g \psi. $$
Indeed, in terms of the Hermitian inner product,
\begin{align*}
 \langle d_g^* \alpha, f \rangle_{L^2} =
 \langle \alpha, df \rangle_{L^2} =
 \langle \alpha, \partial f \rangle_{L^2}=
 \int_X \partial \alpha \cdot \dbar \bar{f} \omega_g  &=
 i \int_X \frac{\partial \psi}{\partial z} \overline{\frac{\partial f}{\partial z}} dz
 d\bar{z}\\
   &=  - i \int_X \frac{\partial^2 \psi}{\partial z \partial \bar{z}} \overline{f} dz
 d\bar{z} \\
& = -\langle \Delta \psi, f \rangle_{L^2}.
\end{align*}
\end{rema}

 \subsection{ Canonical bundle:  $L = K$ and $\nabla $ is the Riemannian connection}

Let $z$ be a local holomorphic coordinate and let $dz$ be the associated section of $K$.
Differentials of type $(dz)^m$ are sections of $K^m$, the $m$-th power of the canonical bundle. The Riemannian metric on $X$ induces a Hermitian metric $h^m$ on $K^m$, namely
$|dz|_h = |dz|_g $ where $g$ is the co-metric. $dz = dx + i dy$ and at $x + i y$, $|dz| = y$.


The metric $g$ on $TX$ endows a  Hermitian metric $g^*$ on $K$ and the associated Riemannian  connection $\nabla_{g}$
is the Chern connection with connection $1$-form $\alpha = -  \partial \log g^{1 \bar{1}}$ in the frame $dz$. For
simplicity of notation we write $\phi= - \log g^{1 \bar{1}}$.    It induces
connections and Hermitian metrics on $K^m$ with connection $1$-forms $m \alpha$. The associated  Bochner
Laplacian $\nabla_{m,g}^* \nabla_{m,g}$ on $K^m$ corresponds to
the quadratic form
$$\begin{array}{lll} q_{m, g}(s) & = &  \int_X |\nabla_{m, g} s|^2_{m,g} \omega_g =
 \int_X |d f + m f \partial \phi |^2  \|(dz)^m\|^{2}_{g^m} \omega_g \\&&\\
 & = &  \int_X |d f + m f \partial \phi |^2 e^{- m \phi}  \omega_g. \end{array}$$
 Note that $\omega_g = \frac{i}{2} g_{1 \bar{1}} dz \wedge d \bar{z}$ and
 the Laplacian on scalar functions  is given by
$\Delta_0 f =  g^{1 \bar{1}} \frac{\partial^2 f}{\partial z \partial \bar{z}}. $

\begin{prop}  Let $\nabla_m$ be the Chern connection for $(K^m, g^{*m})$. Then,
$$\nabla_m^*\nabla_m (f (dz)^m)=  (-\Delta_g f +  m G(\partial \phi , \dbar f) + m K f ) (dz)^m. $$
\end{prop}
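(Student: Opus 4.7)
This proposition is a direct specialization of Proposition \ref{CHERN} to the holomorphic line bundle $L = K^m$ equipped with the Hermitian metric induced by $g^*$. The plan is to substitute the explicit data into the general formula and then identify the curvature term with the scalar/Gaussian curvature.

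First, I would set up the identifications. With $\phi = -\log g^{1\bar{1}}$ as stipulated just before the statement, we have $\|(dz)^m\|^2_{g^{*m}} = e^{-m\phi}$, so the Hermitian weight appearing as $\psi$ in Proposition \ref{CHERN} becomes $m\phi$. Consequently, the Chern connection $1$-form on $K^m$ in the holomorphic frame $(dz)^m$ is
\[
\alpha_m = \partial(m\phi) = m\,\partial\phi,
\]
and the term $G(\partial\psi,\dbar f)$ in Proposition \ref{CHERN} becomes $m\,G(\partial\phi,\dbar f)$.

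Next, I would identify the curvature contribution $i*\Omega^{\nabla_m}$. Since the Chern connection on $K^m$ is the $m$-th tensor power of the Chern connection on $K$, its curvature satisfies
\[
\Omega^{\nabla_m} = m\,\Omega^{\nabla_K} = -m\,\ddbar\log\|(dz)\|^2_{g^*} = m\,\ddbar\phi.
\]
The identity $dd^c\log g^{1\bar{1}} = K\,\omega_g$ recalled in the excerpt (with $K$ the Gaussian curvature) together with $dd^c = i\,\ddbar$ then gives $i\,\ddbar\phi = K\,\omega_g$ (up to the sign convention fixed by the scalar Laplacian $\Delta_g = g^{1\bar{1}}\partial_z\partial_{\bar z}$ in this paper). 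Applying the Hodge star and using $*\,\omega_g = 1$ yields $i*\Omega^{\nabla_m} = mK$.

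Substituting these two identifications into the conclusion of Proposition \ref{CHERN} gives
\[
\nabla_m^*\nabla_m(f(dz)^m) = \bigl(-\Delta_g f + m\,G(\partial\phi,\dbar f) + mK f\bigr)(dz)^m,
\]
as claimed. The only potential obstacle is bookkeeping of sign conventions in passing between $\ddbar$, $dd^c$, and the Hodge star in the presence of the paper's particular sign convention for $\Delta_g$; this is already fixed by the consistency remark following Proposition \ref{CHERN} (the identity $d^*_g\alpha = \Delta_g\psi$ for a Chern $1$-form $\alpha = \partial\psi$), so I would simply quote that remark to lock down the sign of the curvature term.
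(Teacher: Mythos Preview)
Your proposal is correct. The paper itself begins its proof with ``This follows from Proposition \ref{CHERN},'' which is precisely the specialization you carry out; it then gives an independent direct argument via the Bochner--Kodaira identity \eqref{BOCHKOD}, computing $\dbar_m^*\dbar_m(f(dz)^m)$ explicitly (equation \eqref{ddbarprop}) and adding the curvature term. So your route is the one the paper asserts but does not write out, while the paper's displayed proof is an alternative verification by hand; both are valid and short.
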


\begin{proof} This follows from Proposition \ref{CHERN}. We give a direct proof. By the Bochner-Kodaira formula \eqref{BOCHKOD}, it suffices to prove

\begin{equation} \label{ddbarprop}
\dbar_m^* \dbar_m (f (dz)^m)  =  \left(g^{1 \bar{1}} \frac{\partial^2 f}{\partial z \partial \bar{z}} - m \left(\frac{\partial f}{\partial \bar{z}} g^{1 \bar{1}}\right)  \frac{\partial \phi}{\partial z} \right) (dz)^m,
\end{equation}
where $\phi(z) = - \log |dz|_g= - \log g^{1, \bar{1}}.$

As above, we calculate the adjoint to be
\begin{multline*}
\dbar_m^*(f (dz)^m \otimes d\bar{z}) = \left(e^{m\phi} \omega_{h}^{-1}  \frac{\partial}{\partial z}\left(f(z) g^{1 \bar{1}} e^{-m \phi} \omega_h\right) \right) \\
= g^{1 \bar{1}}   \frac{\partial f}{\partial z}  - m f(z) g^{1 \bar{1}}   \frac{\partial \phi}{\partial z}.
\end{multline*}

It follows that
\begin{align*}
&\dbar_m^* \dbar_m (f (dz)^m)  \\
=&  \dbar_m^*(\frac{\partial f}{\partial \bar{z}} (dz)^m \otimes (d \bar{z})\\
  =&  \left(e^{m\phi} \omega_{g}^{-1}  \frac{\partial}{\partial z} \left(\frac{\partial f}{\partial \bar{z}} g^{1 \bar{1}} e^{-m \phi} \omega_g\right) \right) \\
  =&  g^{1 \bar{1}} \frac{\partial^2 f}{\partial z \partial \bar{z}} - m \left(\frac{\partial f}{\partial \bar{z}} g^{1 \bar{1}}\right)  \frac{\partial \phi}{\partial z} + \frac{\partial f}{\partial \bar{z}} g^{1 \bar{1}} \frac{\partial \log  g^{1 \bar{1}}}{\partial z}
+  \left(\frac{\partial f}{\partial \bar{z}} g^{1 \bar{1}}\right)  \frac{\partial \log \omega_g}{\partial z} \\
  = & g^{1 \bar{1}} \frac{\partial^2 f}{\partial z \partial \bar{z}} - m \left(\frac{\partial f}{\partial \bar{z}} g^{1 \bar{1}}\right)  \frac{\partial \phi}{\partial z} ,
\end{align*}
where we used $ \frac{\partial \log  g^{1 \bar{1}}}{\partial z} +  \frac{\partial \log \omega_h}{\partial z} = 0. $
\end{proof}

\section{\label{PERTURBATIONSECT} Perturbation theory and genericity}

In this section we prove generic properties of the eigenvalues and
eigensections of Bochner Laplacians $\nabla_{g,h}^* \nabla$
on complex holomorphic Hermitian  line bundles $(L, h) \to X$.  Our ultimate goal
is to deduce generic properties of Kaluza-Klein Laplacians on
the principal $U(1)$ frame bundles $P_h \to X$ associated to $h$. First
we discuss generic properties of Bochner Laplacians on the line bundles
and then we draw conclusions for the  Kaluza-Klein Laplacians. We prove
that for generic data $(g, h, \nabla)$ (with fixed $(J_L, J)$), eigenvalues
of Bochner Laplacians $\nabla_{g,h}^* \nabla$ are simple (multiplicity one)
and all eigensections intersect the zero section transversally (i.e., have
$0$ as a regular value). This immediately implies that for the associated Kaluza-Klein Laplacians $\Delta_G$  on $P_h$, all joint eigenfunctions of the $U(1)$
action and $\Delta_G$ have simple joint spectrum and have $0$ as a regular value. In Section \ref{simple}, we discuss the multiplicity of the spectrum of $\Delta_G$, hence proving a part of Theorem \ref{MAIN}.

The main result of this section is:

\begin{theo}\label{GENERIC1}

For generic `admissible data' described below, and for every $m$,  the spectrum of
each Bochner Laplacians $\nabla_{g,h}^* \nabla$ on $C^k(X, L^m)$ is simple  and  all of its eigensections have zero as a regular value.
 Moreover, if we lift sections to equivariant eigenfunctions $\phi$,
then $\Re \phi$ and $\Im \phi$ have zero as a regular value.

 The generic admissible data is of the following kinds:

  \begin{itemize}
  \item[(i)]  We fix $h,g$ and vary the connection  $\nabla$ in $\acal_h$.
  Fixing $h$ is equivalent to fixing the principal $U(1)$ bundle $P_h \to X$, and varying the connection $1$-forms. \bigskip

  \item[(ii)] We fix $(J, J_L, g)$ and vary both  $h$ and $\nabla$, assuming that $\nabla \in \acal_{\C}$ is compatible with  $J_L$ on $L$ but not necessarily with $h$.    \bigskip

  \item[(iii)] We fix $(g, J_L, J)$ and vary $(h, \nabla)$ assuming that $\nabla$ is compatible with both $(h, J_L)$, hence is the Chern connection
  of $(L, J_L, h)$.\bigskip

  \item[(iv)] We fix $L = K^m$ and also fix $J$ and vary $g$ in the conformal class  associated to $J$. We assume that $h$ is the Hermitian metric induced by $g$ and
that  $\nabla$ is the Levi-Civita connection.

   \end{itemize}

   \end{theo}

   The proofs in each of the cases are given in separate sections.

   Note that
the functions relevant to this article are smooth sections of a complex line bundle $L$,
and may locally be represented as complex valued functions $u$. We will
prove that
 $u: M \to \C$ has zero as a regular value, i.e., that  $d u_p
= d \Re u + i d \Im u$
is surjective. It follows that $\Re u, \Im u$ are independent and nowhere
vanishing on their zero sets, and that
each has zero as a regular value

\subsection{The Uhlenbeck framework}

To study generic properties of the spectrum, we follow \cite{U} and work with $C^r$ spaces of metrics and connections. We use the following notation:

\begin{itemize}
\item We denote by $\gcal^r(X)$ the Banach space of $C^r$ metrics on $X$.
Since $X$ is a surface and we usually fix the complex structure $J$, we
only work with $C^r$ metrics in the associated conformal class $\mathrm{Conf}(J)$ and represent them in the usual Weyl gauge $g = e^{\rho} g_0$
relative to a fixed background metric $g_0 \in \mathrm{Conf}(J)$. Thus,
we may identify $\gcal^r(X) \cong C^r(X)$. We may also fix the
area of the metrics with no loss of generality and then $\mathrm{Conf}(J)$
may be identified with the space $\kcalomega$ of \kahler metrics
on $X$ in a fixed cohomology class. This is simply a different choice
of gauge in which we write the \kahler forms as $\omega_{\phi} = \omega_0 + i \ddbar \phi$ and use the potentials $\phi$ rather than the Weyl gauge $u$ to parameterize metrics. \bigskip

\item We denote by $\hcal^r(L)$ the Banach space of $C^r$ Hermitian
metrics on $L$. Once we fix a local frame $e_L$ we may identify
$h \in \hcal^r(L)$ with the function $\psi$ such that $\|e_L(z)\|^2_h = e^{-\psi(z)}$, and $\hcal^r(L) $ is then equivalent to $C^r(X)$ except of
course that the identification is frame dependent and the frame is only local (defined on the complement of a smooth closed  curve in $X$, e.g.).

\bigskip

\item We denote by $\acal^r(L)$ the space of connections with
$C^r$ connection forms. As before, we also denote by $\acal_h^r$, resp.
$\acal_{\C}^r$,  the
$h$-compatible (resp. $J_L$-compatible) $C^r$ connections. \bigskip

\item We denote by $C^r(X, L)$ the $C^r$ sections of $L$. We also denote by $H^s(X, L)$ the Sobolev space of sections with $s$ derivatives
in $L^2$.
\bigskip

\end{itemize}

We  define
\begin{equation*} \Phi_L: \gcal^r(X) \times \hcal^r(L) \times \acal^r(L) \times H^2(X, L) \times \C  \to
L^2(X, L), \;\; \end{equation*}
by
\begin{equation*}
\Phi_L(g, h, \nabla, s, \lambda) = (\nabla^*_{g, h} \nabla -\lambda) s.
\end{equation*}
Here, the  eigenvalue parameter $\lambda$ in the domain is allowed to be complex even though at
zeros of $\Phi_L$ it is always real. This does not change the arguments in \cite{U} but is needed so
that $\lambda s$ spans the eigenspace when $s$ is an eigensection. In \cite{U} the eigenfunctions were
real-valued, so this issue did not arise.

Recall that a linear map between Banach spaces is {\it Fredholm} if it has closed
image and finite dimensional kernel and cokernel. The index of a Fredholm operator is the difference of the dimensions
of its kernel and cokernel. A nonlinear map $\Phi: N \to Y$ of Banach manifolds is Fredholm if its derivative $D \Phi_n$ is Fredholm for every
$n \in N$.

Our first goal, roughly speaking, is to prove that $\Phi$ is a Fredholm map of index $0$,
i.e., to prove  surjectivity of the differentials $D_2 \Phi$
from tangent spaces of
$$Q = \{(g, h, \nabla, s, \lambda): \Phi_L(g, h, \nabla, \lambda) = 0\} $$
to $L^2(X, L)$.  It is sufficient to pick the relevant types of frames
and calculate the Bochner Laplacians in the frame as in Section \ref{BOCHNERSECT}.

  Regarding the surjectivity, we need to prove density of the
image and that the image is closed.  Some care needs to be taken because sections of complex line bundles are `vector-valued', i.e., have two real components. As explained in \cite{EP12}, there are pitfalls to avoid
  when generalizing the arguments of \cite{U} to the vector-valued case.
  But sections of line bundles are locally complex-valued functions and are essentially
  scalar functions, albeit with scalars in $\C$.

\subsection{Uhlenbeck's argument} We briefly
review Uhlenbeck's
 proof  that for generic metrics on compact $C^r$Riemannian manifolds,
 all eigenvalues are simple and  all eigenfunctions have $0$ as a regular value.

 Her framework is quite general and therefore uses the notation $B$
 for the relevant space of metrics or other geometric data, and $L_b$
 for the Laplacian associated to $b$. The relevant  functions
 are denoted by $u$ and the space of such functions on a manifold
 $M$ is denoted by $C^k(M)$,
 even though they could be sections of a bundle over $M$.
 Then define
$$\Phi(u, \lambda, b) = (L_b + \lambda) u, $$
and put

\begin{itemize}

\item $Q := \{(u, \lambda, g) \in C^k(X) \times \R_+ \times B: \Phi(u, \lambda, b) = 0\}$.
\bigskip

\item $\alpha: Q \times M \to \C: \alpha(u, \lambda, b, x) = u(x). $\bigskip

\item  $\beta: Q \times M \to T^*M: \beta (u, \lambda, b, x) = \nabla u(x). $\bigskip

\end{itemize}
Then,
\begin{multline*}
T_{u, \lambda, b} Q  = \\
 \{(v, \eta, s) \in H^{1,0}(X) \times \mathbb{C} \times T_b B: \int_X u v dV_g  = 0, \; (L_b + \lambda) v + \eta u + D_2 \phi s = 0\}.
\end{multline*}
We often write $$v = \dot{u}, \eta = \dot{\lambda}, D_2(\Phi) s = \lambda \dot{\Delta} u, \;\; (\Delta + \lambda) \dot{u} + (\dot{\Delta} + \dot{\lambda}) u = 0.$$ Further, let $D_1 \alpha$ denote the derivative of $\alpha$ along $Q$. Then,
$$ D_{(u, \lambda, b)} \alpha(v, 0, c, 0) = v(x) = \dot{u}(x). $$
Also define $J$ to be the image of $D_2 \Phi$,  $$J = \mathrm{Im} D_2 \Phi_{(u, \lambda, b)}  = \{  \dot{\Delta} u:  \dot{\Delta} \; {\mathrm is\;a\;variation\; of} \; \Delta\; {\mathrm along\;a\;curve\;of\;metrics}\}.  $$

We use the following `abstract genericity'  result of \cite[Theorem 1]{U}- \cite[Lemmas 2.7-2.8]{U}.

\begin{theo} \label{U} Assume that $\Phi$ is $C^k$ and has zero as a regular value.  Then the eigenspaces
of $L_b$ are one-dimensional.
If additionally, $\alpha: Q \times M \to \mathbb{C}$ has zero as a regular value,  then additionally
$$\{b \in B: {\mathrm the \; eigenfunctions\; of \;} L_b\; {\mathrm have\; 0\; as\; a \; regular \; value} \} $$
 is  residual in  $B$.
\end{theo}

The key proposition is the following procedure for verifying the first  hypothesis of Theorem \ref{U}.
  (see  \cite[Proposition 2.10]{U}). \begin{prop}\label{PROP}
Let $J = \mathrm{im} D_2 \Phi$ and assume that for $W \in L^1(M)$ and
$W \in C^2(M - \{y\})$, the property $\int_M W(x) j(x) d\mu_x = 0$ for all $j \in J$
implies $W = 0$. Then  $\phi$ is $C^k$ and has zero as a regular value. \end{prop}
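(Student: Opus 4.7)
The plan is to follow Uhlenbeck's argument from \cite{U} essentially verbatim, adapted to our line-bundle setting. We wish to show that at every point $(u, \lambda, b) \in Q = \Phi^{-1}(0)$, the derivative
\[
D\Phi_{(u,\lambda,b)}(v, \eta, c) = (L_b + \lambda) v + \eta u + D_2 \Phi(c)
\]
is surjective onto the target $L^2(M, L)$. Closed range is automatic from ellipticity: $L_b + \lambda$ is a self-adjoint elliptic operator of order two on a compact surface, hence Fredholm of index zero with closed range in $L^2$, and the additions $\eta u + D_2 \Phi(c)$ contribute only a continuous perturbation, so the full image remains a closed subspace. Therefore surjectivity reduces to density, equivalently to showing that the $L^2$-annihilator of the image is trivial.

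Suppose $W \in L^2(M, L)$ pairs to zero with every element of the image of $D\Phi$. Testing against the three independent factors $v$, $\eta$, and $c$ in turn yields, first, $(L_b + \lambda) W = 0$ in the distributional sense (varying $v$ and using self-adjointness of $L_b$); second, the scalar constraint $\langle W, u\rangle_{L^2} = 0$ (varying $\eta \in \C$); and third, $\langle W, j\rangle_{L^2} = 0$ for every $j \in J = \mathrm{im}\, D_2 \Phi$ (varying $c$). The first equation, combined with standard interior elliptic regularity for operators with $C^r$ coefficients, promotes $W$ to a $C^2$ section on all of $M$ provided $r$ is chosen sufficiently large; in particular $W \in L^1(M) \cap C^2(M)$, which is the setting of the hypothesis (with no exceptional point $y$ needed). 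The hypothesis then forces $W \equiv 0$. Hence $D\Phi$ is surjective and $0$ is a regular value of $\Phi$. The $C^k$ regularity of $\Phi$ itself is immediate from the explicit local expressions derived in Section \ref{BOCHNERSECT}, which display $\nabla^*_{g,h} \nabla s$ as a smooth polynomial expression in $(g, h, \nabla, s)$ and their first and second derivatives.

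The main obstacle is not the abstract deduction above, which is essentially a transcription of Uhlenbeck's Lemmas 2.7--2.10, but rather the task of \emph{verifying the hypothesis on $J$ in concrete settings}, which must be carried out separately in each of the four cases (i)--(iv) of Theorem \ref{GENERIC1}. That verification requires explicit knowledge of $D_2 \Phi$ in each parametrization---perturbations of the base metric $g$, of the Hermitian metric $h$, of the connection $1$-form in $\acal_h$, or of $h$ alone with $\nabla$ forced to be the Chern connection---and in each case a unique continuation / non-vanishing argument is needed to show that any eigensection $W$ satisfying $\langle W, u\rangle = 0$ and orthogonal to all of $J$ must vanish. The permission in the hypothesis that $W$ be merely $C^2$ away from a single point $y$ is there for a different reason: it is the form needed when the same proposition is applied to the evaluation map $\alpha(u, \lambda, b, x) = u(x)$, where one tests against a perturbation constructed from the Green's function of $L_b + \lambda$, which is singular precisely at $y = x$.
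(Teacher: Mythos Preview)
Your proof is correct and faithfully reconstructs Uhlenbeck's argument. The paper itself does not prove this proposition but simply cites it as \cite[Proposition 2.10]{U}, so there is no ``paper's own proof'' to compare against beyond the citation; your write-up fills in exactly what Uhlenbeck does.

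One small imprecision worth tightening: the phrase ``the additions $\eta u + D_2\Phi(c)$ contribute only a continuous perturbation, so the full image remains a closed subspace'' is not the right justification---adding a continuous map to a closed-range operator does not preserve closed range in general. The correct reason is the one implicit in your Fredholm remark: the image of $D\Phi$ already \emph{contains} $\mathrm{im}(L_b+\lambda)$, a closed subspace of finite codimension in $L^2$, and any linear subspace containing a closed subspace of finite codimension is itself closed. With that adjustment the argument is clean. Your closing remark about why the hypothesis allows a singularity at a single point $y$ (the Green's function application to the evaluation map $\alpha$) is also accurate and matches how the proposition is deployed later in the paper.
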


For the sake of completeness, we  briefly review the main steps in proving Theorem \ref{U}:
The main input are two transversality theorems. The first is:  Let $\phi: H \times B \to E$
be a $C^k$ map where $H, B, E$ are Banach manifolds. If $0$ is a
regular value of $\phi$ and $\phi_b(\cdot): = \phi(\cdot, b)$ is a Fredholm
map of index $< k$, then the set $\{b \in B: 0 \;{\mathrm is\;a\; regular\;value\;of}  \; \phi_b\}$ is residual in $B$.

The second statement follows from  \cite[Lemma 2.7]{U}:  Let
 $\pi: Q \to B$ be a $C^k$ Fredholm map of index $0$. Then if
$f: Q \times X \to Y$ is a $C^k$ map for $k$ sufficiently large and if $f$
is transverse to $Y'$ then $\{b \in B: f_b: = f|_{\pi^{-1}(b)} \; {\mathrm is\; transverse\; to\;} Y'\}$ is residual in $B$.
Let $$\alpha: f^{-1}(Y') \to B \; {\mathrm be}\; \alpha: f^{-1}(Y') \subset Q \to B.$$

\begin{lemm} The eigenfunctions of $L_b$ have zero as a regular value
if  $b$ is a regular value of $\pi$ and if $0$ is a regular value of
$\alpha |_{\pi^{-1}(b)} \times M : = \alpha_b$.  \end{lemm}

 Eigenfunctions and eigenvalues move continuously under perturbations of the operator. So it is easy to show that the set of metrics with for which the $j$th  eigenvalue is simple
is open. The difficulty is to prove that this set is dense.

To prove the first statement in Theorem \ref{GENERIC1} we need to verify the hypotheses of Theorem \ref{U} and therefore
need to prove Proposition \ref{PROP}, i.e., to determine the range  of $D_2 \phi$.

\begin{prop} \label{SURJ2} For each of the admissible types of perturbation,
$D_2 \Phi_m$ is surjective from $T_{(u, \lambda, \phi)} Q_m \to C^{k-2}.$
\end{prop}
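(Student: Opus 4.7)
The plan is to apply the criterion of Proposition \ref{PROP} in each of the four admissible cases. Thus it suffices, for a fixed nonzero eigenpair $(s,\lambda)$ with $s = f e_L^m$, to show that if $W \in L^1(X, L^m)$ is smooth off a point and satisfies
\begin{equation*}
\int_X \langle W,\; D_2\Phi_m(\dot g,\dot h,\dot\nabla)\,s\rangle_{h^m}\, dV_g \;=\; 0
\end{equation*}
for every admissible variation $(\dot g,\dot h,\dot\nabla)$ of the geometric data, then $W\equiv 0$. I would compute $D_2\Phi_m \,s$ in each case directly from the local formulas of Section \ref{BOCHNERSECT}, integrate by parts so that the arbitrary variation appears as a test function, and read off a pointwise relation that $W$ and $f$ must satisfy.

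In case (i), a tangent vector is a real $1$-form $\dot\alpha\in\Omega^1(X;\R)$, and differentiating Proposition \ref{BOCHUNIT} gives
$D_2\Phi_m(\dot\alpha)\,s = \bigl(-2i\,G(df,\dot\alpha) + i f\,d_g^*\dot\alpha + 2\,G(\alpha,\dot\alpha)f\bigr)e_L^m$. Integrating the $d_g^*\dot\alpha$ term by parts against $w:=\overline{W}f$ yields an identity of the form
\begin{equation*}
\int_X \bigl\langle A(f,\alpha,W),\; \dot\alpha \bigr\rangle_g\, dV_g \;=\; 0
\end{equation*}
for every smooth real $1$-form $\dot\alpha$, so the $1$-form $A(f,\alpha,W)$ vanishes identically. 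Splitting into real and imaginary components produces two pointwise scalar identities that, on the open set where $f\neq 0$, are equivalent to a first-order elliptic system for $W$ with smooth coefficients. Combined with the elliptic equation $(\nabla_{g,h}^*\nabla-\lambda)^*W = 0$ (which is built into the fact that $W\perp\mathrm{Im}\,D_1\Phi_m$ in the usual Uhlenbeck setup) and Aronszajn's unique continuation theorem, this forces $W\equiv 0$. Cases (ii) and (iii) subsume case (i): in addition to the connection variation one may vary $h$ (equivalently $\psi$), whose contribution produces a further independent scalar relation, so density of the image is \emph{a fortiori}.

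Case (iv) is the most delicate and is where I expect the main obstacle to lie. Here the only freedom is a scalar conformal factor $\dot\rho\in C^r(X)$ (with $g=e^\rho g_0$), which forces a one-parameter variation of $g^{1\bar 1}$, of $dV_g$, and of the induced Hermitian and Chern data on $K_X^m$ simultaneously. Differentiating the formula
$\nabla_m^*\nabla_m(f(dz)^m) = (-\Delta_g f + m\,G(\partial\phi,\dbar f) + mKf)(dz)^m$
in $\dot\rho$ and integrating by parts produces an identity
\begin{equation*}
\int_X \mathcal{B}(f,W,\nabla f,\nabla W)\,\dot\rho \, dV_g \;=\; 0 \qquad \text{for all } \dot\rho\in C^r(X),
\end{equation*}
whence the scalar expression $\mathcal{B}$ vanishes pointwise. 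The difficulty is that $\mathcal{B}$ is a single scalar constraint whereas $W$ carries two real components; to close the argument I would combine this identity with the complex-conjugate relation coming from reality of $\dot\rho$ (which gives the analogous equation with roles of $\partial/\dbar$ exchanged) to obtain an independent second relation, and then use the Bers-type local form of eigendifferentials from Section \ref{LOCALSECT} to ensure that the coefficients of $W$ and $\overline{W}$ in the resulting $2\times 2$ linear algebraic system are nondegenerate on the open set where $f\neq 0$. Unique continuation then extends $W\equiv 0$ to all of $X$. If the conformal-only variation turned out to be insufficient, the fallback is to enlarge the class of perturbations to all of $\gcal^r(X)$ (not just within $\mathrm{Conf}(J)$), which restores the scalar $+$ trace-free splitting and provides enough degrees of freedom to mimic the argument of case (i).

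Finally, once $D_2\Phi_m$ is surjective, Uhlenbeck's transversality machinery (Theorem \ref{U} together with the second map $\alpha$) is applied verbatim to conclude both simplicity of the spectrum and that every eigensection has $0$ as a regular value for a residual set of admissible data; the statement about the lift $\phi$ follows from \eqref{ubab}, since regularity of $f=a+ib$ as a map $X\to\C$ at its zeros ensures the differential of $\Re\phi=a\cos m\theta+b\sin m\theta$ (and likewise $\Im\phi$) is nonzero along the full $S^1$-fiber over each zero.
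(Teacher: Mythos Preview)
Your overall framework---reduce surjectivity of $D_2\Phi_m$ to the orthogonality criterion of Proposition \ref{PROP} and compute the variation of $\nabla_{g,h}^*\nabla$ from the local formulas of Section \ref{BOCHNERSECT}---is exactly what the paper does. The discrepancy is in how the individual cases are finished.

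For the connection variation (your case (i)) the paper's computation agrees with yours through the pointwise identity $(-i\,df + 2\alpha f)\bar F + i f\,d\bar F = 0$, but then finishes in one line without any elliptic machinery: on the open set where $f\bar F\neq 0$ this reads $d\log(\bar F/f) = -2i\alpha$, hence $d\alpha = 0$, contradicting nontriviality of the line bundle. No Aronszajn is needed; the topological obstruction does all the work.

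The real divergence is your case (iv). You flag it as the most delicate and propose differentiating the Chern-connection formula, extracting a scalar identity $\mathcal{B}=0$, pairing it with its conjugate to get a $2\times 2$ system, invoking Bers local structure, etc.---and you hedge with a fallback to non-conformal variations. The paper's argument is dramatically simpler and you have missed its key observation: in real dimension two, with $h$ and $\nabla$ held fixed, \emph{every} term in Proposition \ref{BOCHUNIT} carries exactly one factor of $G=g^{-1}$, so under a conformal variation $g\mapsto e^{\rho}g_0$ the Bochner Laplacian scales pointwise and
\[
\delta\bigl(\nabla^*\nabla\bigr)(f e_L) \;=\; \rho\,\nabla^*\nabla(f e_L).
\]
Evaluated on an eigensection with eigenvalue $\lambda$ this gives $D_2\Phi_m(\dot\rho)\,s = -\lambda\,\dot\rho\,s$, so the image of $D_2\Phi_m$ contains $\{\rho\,s : \rho\in C^\infty(X)\}$. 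Orthogonality of $W=F e_L$ then reads $\int_X \rho\,f\bar F\,e^{-\psi}\,dV_g = 0$ for all $\rho$, forcing $f\bar F\equiv 0$ and hence $F\equiv 0$ since $f$ vanishes only on a null set. That is the entire argument; no integration by parts, no $2\times 2$ system, no unique continuation, and no need to enlarge beyond conformal variations.
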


\subsection{Base metric variations } In this section we fix $(h, \nabla, J, J_L)$
and vary only $g = e^{\rho}$. Equivalently, we consider Kaluza-Klein metrics
on a fixed $U(1)$ bundle$P_h \to M$ with a fixed connection $\alpha$ and
vary the base metric $g$.

\begin{prop}

Suppose that     $(L, h, J) \to X$ is a
 Hermitian holomorphic  line bundle with $h$-compatible connection $\nabla$. Let
$\nabla (f e_L) =(d f + if \alpha) \otimes e_L$ with $\alpha \in  \R$ in a unitary frame $e_L$.
Then for generic Riemannian metrics $g = e^{\rho} g_0$ in the conformal
class of $J$, all of the eigenvalues of $\nabla_{g,h}^* \nabla_h$ are
simple and all of the eigensections have $0$ as a regular value. \end{prop}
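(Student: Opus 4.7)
The plan is to apply the Uhlenbeck abstract genericity machinery laid out just before the Proposition. With $(h,\nabla,J,J_L)$ fixed, I parametrize conformal metrics by $g = e^{\rho} g_0$ for a background $g_0 \in \mathrm{Conf}(J)$ and set
\begin{equation*}
\Phi(s,\lambda,\rho) \;=\; (\nabla^*_{g,h}\nabla - \lambda)\,s,
\end{equation*}
so that $Q = \Phi^{-1}(0)$ is the space of eigendata over the conformal class. According to Proposition \ref{PROP}, simplicity of the spectrum will follow once I identify the range $J = \mathrm{im}\, D_2\Phi$ and verify that any $W \in L^1(X,L)$ orthogonal to $J$ must vanish. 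The transversality of eigensections to the zero section will follow from the second half of Theorem \ref{U} applied to the evaluation map $\alpha(s,\lambda,\rho,x) = s(x)$ on $Q \times X$.

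The decisive calculation is the variation of the Bochner Laplacian under a conformal change of $g$. Starting from Proposition \ref{BOCHUNIT} in a unitary frame,
\begin{equation*}
L(fe_L) = \bigl(-\Delta_g f - 2i\,G(df,\alpha) + if\,d_g^*\alpha + G(\alpha,\alpha)\,f\bigr) e_L,
\end{equation*}
I would check term by term how each ingredient transforms under $g \mapsto e^{\rho} g_0$ in dimension two: the scalar Laplacian obeys $\Delta_g = e^{-\rho}\Delta_{g_0}$; the co-metric scales as $G_g = e^{-\rho} G_{g_0}$ on $1$-forms; and since $*_g$ is conformally invariant on $\Omega^1$ while it scales as $e^{-\rho}$ on $\Omega^2$, a short Hodge-star computation yields $d_g^*\alpha = e^{-\rho} d_{g_0}^*\alpha$. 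Consequently every term scales by $e^{-\rho}$, and
\begin{equation*}
\nabla^*_{g,h}\nabla \;=\; e^{-\rho}\nabla^*_{g_0,h}\nabla, \qquad \dot L \;=\; -\dot\rho\,L, \qquad D_2\Phi(\dot\rho) \;=\; -\lambda\,\dot\rho\,s.
\end{equation*}

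To verify the hypothesis of Proposition \ref{PROP}, I observe that on a nontrivial Hermitian line bundle no nonzero section can be parallel, so $\ker \nabla^*\nabla = 0$ and $\lambda > 0$ on all eigensections; hence $J = \{-\lambda\dot\rho\, s : \dot\rho \in C^r(X,\R)\}$. The orthogonality condition $\int_X W \cdot (-\lambda\dot\rho\,s)\,dV_g = 0$ for every real $\dot\rho$ forces the real and imaginary parts of $\lambda Ws$ to integrate to zero against every real test function, i.e.\ $Ws = 0$ almost everywhere. Unique continuation for the elliptic equation $(\nabla^*\nabla - \lambda) s = 0$ then implies that the zero set of $s$ has measure zero, so $W \equiv 0$. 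For the regular-value statement I would run the second half of Theorem \ref{U} on $\alpha$, using bump-localized conformal factors $\dot\rho$ supported near a putative zero $x_0$ of $s$ to achieve surjectivity of $D\alpha$ onto $\C$, and then lift the transversality from eigensections on $X$ to the equivariant eigenfunctions $\phi = u + iv$ on $P_h$ via \eqref{ubab}.

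The main obstacle, as I see it, is keeping the complex-valued nature of the sections straight. Because the admissible variation $\dot\rho$ is real while $\dot L s = -\lambda\dot\rho\,s$ is complex-proportional to $s$, the range $J$ fills only a $1$-real-dimensional subspace of $\C \cdot s(x)$ at each point, and the remaining real direction must be supplied by the complex eigenvalue variation $\mu$ built into $D_1\Phi$ together with the projection onto $\ker(L-\lambda)^{\perp}$ by $(L-\lambda)v$. Matching Uhlenbeck's original real-valued cokernel argument to the present complex line-bundle setting (the pitfall flagged in \cite{EP12}) is the bookkeeping step that requires the most care; once that is in place, everything else follows routinely from the conformal covariance identity $\dot L = -\dot\rho L$ and unique continuation.
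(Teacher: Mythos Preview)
Your proposal is correct and follows essentially the same route as the paper. The heart of both arguments is the conformal covariance identity $\nabla^*_{g,h}\nabla = e^{-\rho}\nabla^*_{g_0,h}\nabla$, yielding $D_2\Phi(\dot\rho) = -\lambda\dot\rho\,s$ on an eigensection; the paper then runs exactly your orthogonality argument (writing $W = F e_L$ and concluding $f\bar F e^{-\psi}\equiv 0$, hence $F\equiv 0$ since $f$ vanishes only on a null set), and notes that the image is closed by Fredholmness.

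One small point: your sketch of the regular-value step via ``bump-localized $\dot\rho$ supported near a zero $x_0$ of $s$'' is not quite the mechanism that works, since $\dot\rho\,s$ is then small precisely where you want leverage. The paper (in the subsequent subsection completing Theorem~\ref{GENERIC1}) instead writes the induced eigenfunction variation as $v(x_0) = -\int G_{m,\lambda}(x_0,y)\,\Pi_\lambda^\perp[(\dot D_m+\dot\lambda)u](y)\,dV(y)$ and uses the already-established surjectivity of $D_2\Phi$ to let the source range over all of $\ker(D_m+\lambda)^\perp$; surjectivity of $v(x_0)$ onto $\C$ then follows from nondegeneracy of the Green's function. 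Your identification of the complex-versus-real bookkeeping issue (and its resolution via the complex $\lambda$-variation and the $(L-\lambda)v$ term) is exactly what the paper builds in.
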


\begin{proof}
By Proposition \ref{BOCHUNIT},
  $$  \nabla^* \nabla (f e_L)  = \left(- \Delta_g f   - 2 i  G(df, \alpha)   + if d_g^*\alpha + G(\alpha, \alpha)  f\right) e_L.$$
  where $\Delta_g f$ is the scalar Laplace operator, where $g = e^{\rho}$.

 Taking the variation $\delta$ with respect to $\rho$ (and designating
 the variation with a dot),
   $$  \delta \nabla^* \nabla (f e_L)  = \left(- \dot{\Delta}_g f   - 2 i  \dot{G}(df, \alpha)   + if \dot{d}_g^*\alpha + \dot{G}(\alpha, \alpha)  f\right) e_L.$$
   But each term is conformal to that of $g$ with conformal factor $e^{-\rho}$.
Hence
\begin{multline*}
\delta \nabla^* \nabla (f e_L)  \\
 =   - \half \rho \Delta f (x) -2 i \rho G( d f, \alpha) +
\left(i  \rho d_g^* \alpha + \rho G (\theta, \theta) \right) f e_L \\
= \rho \nabla^* \nabla (f e_L).
\end{multline*}

If $\nabla^* \nabla (f e_L)  u = -\lambda u$ then
\[
 \delta_g \nabla^* \nabla (f e_L) u(x)  =  - \lambda \rho u.
\]

To prove that the image of $D_2 \Phi_L$ is dense we argue by contradiction
and suppose that there exists $W \in L^2(X, L)$ such that
$$\int_X \delta_g \nabla^* \nabla h(f e_L, W(z))_h  dV_g = 0$$ for all $\rho$.
But this implies that $\int_X \rho (f e_L, W)_h dV_g = 0$ for all $\rho$,.
then $W = 0$. Write $W = F e_L$ so that the integral becomes,
$\int_X \rho f \bar{F} e^{-\psi} dV_g = 0$ for all $\rho$. This is only possible
if $ f \bar{F} e^{-\psi}  = 0$. But $f$ and $e^{-\psi}$ can only vanish on a set
of measure zero, so $F \equiv 0$ almost everywhere.

The image is closed because $\nabla^* \nabla$ is a Fredholm operator.
\end{proof}
\subsection{Varying the Hermitian connection}

In this section we fix $g, h$ and vary $\nabla \in \acal_h$. In the application to
Kaluza-Klein metrics, $P_h$ is fixed and the base and vertical metrics
are fixed and only the splitting into horizontal and vertical is varied.

We recall from Section \ref{GEOMSECT} that some of the variations are `trivial',
i.e., are within a gauge equivalence class. Bochner Laplacians with gauge-equivalent connections are unitary equivalent by a gauge transformation, i.e., they have the same spectrum and their eigensections are related by
a gauge transformations.  Viewed in terms of line bundles over $X$,
gauge equivalent connection forms are connection forms of a single connection
in two different unitary frames, hence  differ by a gauge transformation  $e^{i \theta} \in Map(X, S^1)$ taking $e_L \to  e^{i \theta} e_L$. The connection $1$-form then
changes by $id \theta \in \Omega^1(X, \R)$.  A unique representative
of a gauge equivalence class  is defined by the Coulomb gauge  $d^* a= 0.$

\begin{prop}

Suppose that     $(L, h, J) \to X$ is a
 Hermitian holomorphic  line bundle and let $\nabla \in \acal_h$ be given by
$\nabla (f e_L) =(d f + i f\alpha)\otimes e_L$ with $\alpha \in  \R$ in a unitary frame $e_L$.
Suppose that $L$ is non-flat, or if it is flat, that $d \alpha \not= 0$.
Then for generic gauge equivalence classes  $\alpha \in \acal_h \cong \Omega^1(X)$,  all of the eigenvalues of $\nabla_{g,h}^* \nabla_h$ are
simple and all of the eigensections have $0$ as a regular value. \end{prop}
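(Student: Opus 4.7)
The plan is to adapt the Uhlenbeck framework of Theorem \ref{U} and Proposition \ref{PROP}, now with the varying parameter being $\alpha \in \acal_h^r \cong \Omega^1(X, \R)$ (with $g$ and $h$ fixed, hence $P_h$ fixed), working in a local unitary frame $e_L$. The starting point is Proposition \ref{BOCHUNIT}, which gives
\[
\nabla^*\nabla (f e_L) = \left(-\Delta_g f - 2iG(df, \alpha) + i f d_g^*\alpha + G(\alpha, \alpha) f\right) e_L.
\]
Linearizing in $\alpha \mapsto \alpha + t\beta$ at $t=0$ and polarizing, a direct computation with one integration by parts yields
\[
\langle (\delta_\beta \nabla^*\nabla) s,\, W\rangle_{L^2} = \int_X G(\omega_{f, F}, \beta)\, dV_g,\qquad s = fe_L,\ W = Fe_L,
\]
where
\[
\omega_{f, F} := 2 f \bar F\, \alpha + i\bigl(f\, d\bar F - \bar F\, df\bigr)
\]
is a complex-valued one-form on $X$, analogous to Uhlenbeck's test expression in the scalar case.

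Following the argument for the base-metric perturbation, it suffices by Proposition \ref{PROP} to show that if $W = Fe_L \in L^2(X, L)$ is orthogonal to the image of $D\Phi_L$ at a zero $(s, \lambda, \alpha)$ with $s \not\equiv 0$, then $W = 0$. Varying $\dot s$ and $\dot\lambda$ separately forces $W$ to be an eigensection of $\nabla^*\nabla$ with the same eigenvalue $\lambda$ and $\langle W, s\rangle_{L^2} = 0$. Varying $\beta$ over all real one-forms in $\Omega^1(X)$ and separating real and imaginary parts of the resulting integral then forces $\omega_{f, F} \equiv 0$ pointwise on $X$.

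The key step is to extract a contradiction from $\omega_{f,F} \equiv 0$. On the open set $U := \{f \neq 0\} \cap \{F \neq 0\}$ we may solve
\[
\alpha = \frac{i}{2}\left(\frac{df}{f} - \frac{d\bar F}{\bar F}\right) = \frac{i}{2}\, d\log(f/\bar F),
\]
so $\alpha$ is locally exact and in particular $d\alpha \equiv 0$ on $U$. If $F \equiv 0$ we are done; otherwise Aronszajn's unique continuation theorem for the second-order elliptic operator $\nabla^*\nabla$ shows that both $\{f \neq 0\}$ and $\{F \neq 0\}$ are open and dense, so $U$ is open and dense in $X$. By continuity of the two-form $d\alpha$, it then follows that $d\alpha \equiv 0$ on all of $X$. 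This contradicts the standing hypothesis: if $L$ is non-flat then $c_1(L) \neq 0$ forces $d\alpha \not\equiv 0$, while in the flat bundle case we assumed $d\alpha \not\equiv 0$ outright. Closedness of the image of $D\Phi_L$ follows, as before, from the Fredholm property of $\nabla^*\nabla - \lambda$.

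The final step---that for generic $\alpha$, $0$ is also a regular value of every eigensection, and hence (by the correspondence between sections of $L^m$ and $U(1)$-equivariant functions on $P_h$) of the equivariant lift $\phi$ on $P_h$---is obtained by applying the same transversality machinery of \cite[Lemmas 2.7--2.8]{U} to the evaluation map on $\Phi_L^{-1}(0) \times X$; the computation of $\omega_{f, F}$ above supplies the needed variational flexibility near any would-be non-regular zero. The main obstacle is the third paragraph above: one must deploy unique continuation to ensure that the zero loci of eigensections cannot be large enough to ``absorb'' the non-flatness of $\alpha$, and carry this out at the appropriate $C^r$ regularity. A related minor point is that exact variations $\beta = d\theta$ (infinitesimal gauge transformations) give no new information---a direct calculation using the eigenvalue equations of Proposition \ref{BOCHUNIT} yields $d^*\omega_{f,F} = 0$ automatically when $s, W$ share an eigenvalue---but this is harmless, since variations over the full $\Omega^1(X)$ still detect $\omega_{f, F} = 0$.
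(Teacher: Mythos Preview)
Your proof is correct and follows essentially the same route as the paper: vary the connection form $\alpha$, integrate by parts to reduce orthogonality to the vanishing of the one-form $\omega_{f,F}=2f\bar F\,\alpha+i(f\,d\bar F-\bar F\,df)$, then divide by $f\bar F$ on the set where both are nonzero to force $\alpha$ to be locally exact, hence $d\alpha=0$, contradicting non-flatness. Your version is in fact slightly more careful than the paper's in two respects: you explicitly invoke unique continuation (Aronszajn) to justify density of $\{f\ne0\}\cap\{F\ne0\}$, whereas the paper simply asserts it; and you separate out the roles of the $\dot s,\dot\lambda$ variations before turning to the $\beta$-variation, which makes the Fredholm structure clearer.
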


\begin{proof}
Again by Proposition \ref{BOCHUNIT},
  $$  \nabla^* \nabla (f e_L)  = \left(- \Delta_g f   - 2 i  G(df, \alpha)   + if d_g^*\alpha + G(\alpha, \alpha)  f\right) e_L,$$
  where $\Delta_g f$ is the scalar Laplace operator. Taking the variation
  with respect to $\alpha$ gives,
    $$\delta  \nabla^* \nabla (f e_L)  = \left(   - 2 i  G(df, \dot{\alpha})   + if d_g^* \dot{\alpha} + 2G(\dot{\alpha}, \alpha)  f\right) e_L.$$


    If the image is not dense, there exists $W = F e_L$ so that
    $$\int_X \left(   - 2 i  G(df, \dot{\alpha})   + if d_g^* \dot{\alpha} + 2G(\dot{\alpha}, \alpha)  f\right) \bar{F} e^{-\psi} dV_g = 0,$$
    for all $\dot{\alpha} \in \Omega^1(X).$ We integrate $d_g^*$ by parts to get,
     $$\int_X \left(  ( - 2 i  G(df, \dot{\alpha})   + 2G(\dot{\alpha}, \alpha)  f) \bar{F}
     + i G(\dot{\alpha},  d (f \bar{F}))  \right) e^{-\psi} dV_g = 0.$$
         We may assume that the frame $e_L$ is unitary so that $\psi =0$.
     If $\beta\in \Omega^1(M, \C)$ and $\int_X G(\beta, \nu) d V_g = 0$ for all $\nu
     \in \Omega^1(M,\R)$, then $\beta= 0$. Indeed, we  may consider  $\nu$
     of the types $\nu = \nu_1 dx$, $\nu_2 dy$ separately to get orthogonality
     of the components $\beta_j$ with $\nu_j$. This reduces matters to the fact that
     if $u, v$ are complex-valued and $\int u v d V_g = 0$ for all $v$, then $u \equiv 0$.
     We conclude that
     $$   ( - 2 i  df   + 2 \alpha  f) \bar{F}
     + i   d (f \bar{F})   = 0 \iff ( -i df + 2 \alpha f)\bar{F} + i f d\bar{F} = 0. $$
     On any open set $U$ where  $f, F \not= 0$ we may divide by $if \bar{F}$ and write the solution as,
     $$\frac{d \bar{F}}{\bar{F}} = - ( - \frac{df }{f}+ 2 i \alpha ).$$
This implies that
     $$d \log \frac{\bar{F}}{f} = - 2 i \alpha \implies d \alpha = 0$$
     on a dense open set and since $\alpha \in C^{\infty}$, it is everywhere closed
     and hence the curvature of $(L,h)$ is zero.
    This is impossible unless $L$ is a topologically trivial line bundle, and the contradiction implies that $F \equiv 0$ except when $d \alpha = 0$.
 \end{proof}

\subsection{Proof of Theorem \ref{GENERIC1}}

 Eigenfunctions move continuously under perturbations of the operator. So it is easy to show that the set of metrics with for which the $j$th  eigenvalue is simple
is open. The difficulty is to prove that this set is dense.

To prove the first statement in Theorem \ref{GENERIC1} we need to verify the hypotheses of Theorem \ref{U} and therefore
need to prove Proposition \ref{PROP}, i.e., to determine the range $J$ of $D_2 \phi$.

To complete the proof of Theorem \ref{GENERIC1} it  suffices to prove:
\begin{prop} For each $m$,  $D_1 \alpha_m$ is surjective to $\C$.  \end{prop}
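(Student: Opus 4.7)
The plan is to identify the tangent space of $Q_m$ at $(u, \lambda, b)$, read off $D_1 \alpha_m$ as a linear functional of the parameter $s$, and then reduce surjectivity onto $\C$ to the non-vanishing of a Green's kernel, the latter being accessible because $\mathrm{Range}(D_2 \Phi_m)$ has already been shown to be dense in $L^2(X, L^m)$ in Proposition \ref{SURJ2}. Tangent vectors $(v, \eta, s) \in T_{(u, \lambda, b)} Q_m$ satisfy
\[
(\nabla^*_{g, h}\nabla_m + \lambda)\, v + \eta\, u + D_2 \Phi_m(s) = 0, \qquad \textstyle\int_X u\, v\, dV_g = 0,
\]
and $D_1 \alpha_m$ sends $(v, \eta, s)$ to $v(x)$.

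I would first restrict to the dense stratum on which $\lambda$ is a simple eigenvalue, which is furnished by Theorem \ref{U} together with the already-established surjectivity of $D_2 \Phi_m$. On this stratum, pairing the linearized equation with $u$ determines $\eta$ as a linear function of $s$, and inverting $\nabla^*\nabla + \lambda$ on $u^\perp$ via the Green's operator $G$ gives the unique tangent $v$ as $v = -G\,\pi_{u^\perp}\!\bigl(D_2 \Phi_m(s)\bigr)$. Evaluating at $x$ and using $G(x, \cdot) \in u^\perp$, the projection cross terms vanish and one obtains the explicit formula
\[
v(x) \;=\; -\bigl\langle G(x, \cdot),\, D_2 \Phi_m(s)\bigr\rangle_{L^2},
\]
where $G(x, y)$ is the Schwartz kernel of $G$. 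By the density statement in Proposition \ref{SURJ2}, the image $\{v(x) : (v, \eta, s) \in T_{(u,\lambda,b)} Q_m\}$ is a complex-linear subspace of $\C$ that is dense in the range of the bounded functional $f \mapsto -\langle G(x, \cdot), f\rangle_{L^2}$ on all of $L^2(X, L^m)$, and therefore equal to it.

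The main obstacle is thus to show $G(x, \cdot) \not\equiv 0$ in $L^2$. I would settle this directly from the defining distributional identity
\[
(\nabla^*\nabla + \lambda)_y\, G(x, y) \;=\; \delta_x(y) \,-\, \frac{u(y)\, \overline{u(x)}}{\|u\|^2},
\]
noting that any identically vanishing $G(x, \cdot)$ would reduce $\delta_x$ to a smooth multiple of $u$, which is impossible. This argument is uniform in $x$, handling the easy case $u(x) \neq 0$ together with the essential case $u(x) = 0$ where the normalization $v \perp u$ prevents one from simply scaling within the eigenline, and yields the required surjectivity $D_1 \alpha_m \colon T Q_m \twoheadrightarrow \C$ that completes the hypotheses of Theorem \ref{U}.
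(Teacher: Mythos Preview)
Your proof is correct and follows essentially the same Green's-function strategy as the paper: solve the linearized constraint for $v$ via the Green's operator on $u^\perp$, invoke the density of $\mathrm{Range}(D_2\Phi_m)$ from Proposition \ref{SURJ2}, and reduce surjectivity onto $\C$ to the non-vanishing of $G(x,\cdot)$. Your closing step---ruling out $G(x,\cdot)\equiv 0$ directly from the distributional equation $(\nabla^*\nabla+\lambda)_y G(x,y)=\delta_x(y)-u(y)\overline{u(x)}/\|u\|^2$---is in fact a cleaner version of the paper's somewhat elliptical ``$G_{m,\lambda}(x,y)+u_\lambda(x)=0$ is not possible.''
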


\begin{proof}

Let $G_{m, \lambda}(z,w)$ be the kernel of the Green's function
$G_{m, \lambda}: [\ker (D_m + \lambda)]^{\perp} \to  [\ker (D_m + \lambda)]^{\perp}$ for $D_m(g) + \lambda$
for a given background metric $g $. As above, one may use
the Hermitian metric $h$ on $K$ or the associated \kahler metric $g =\omega_J$ as the parameter space of metrics.

We  need to show that for each $x \in M$,
\[
\alpha_m: Q \times \{x\} \to \C: \alpha(u, \lambda, g, x) = u(x)
\]
has $0 \in \C$ as a regular value, i.e., that
$$D_1 \alpha (\cdot, x): T_{u, \lambda, b}(Q) \to \C,\;\; D_1 \alpha(\cdot, x)_{(u, \lambda, g)} (\delta u(x), 0, c, 0) = \delta u(x)$$
is surjective to $\C$, where $D_1$ is the differential along $Q$ with $x \in M$ held fixed. Since $x$ is fixed we may use a local coordinate $z$  and
frame $(dz)^m$ as above and identify local sections of $K^m$ with complex-valued functions  $u: U \to \C$, where $U$ is an open set containing $x$.

The constraint equation for $(v, 0, c, 0) \in T^*_{(u, \lambda, b)} Q$
 is
$$(D_m(g) + \lambda) v + (\dot{D}_m(g) + \dot{\lambda})u = 0,$$
and we can solve for $v \bot \ker(D_m(g) + \lambda)$ as
$$v(x) = - \int_M G_{m, \lambda} (x,y) \Pi_{\lambda}^{\perp} [  (\dot{D}_m + \dot{\lambda})u ](y) dV(y). $$
By Proposition \ref{SURJ2},  the range of $D_2 \Phi$, i.e., the set of functions $ [  (\dot{D}_m + \dot{\lambda})u ] $, spans $L^2_0$. Therefore,
the image  $\Pi_{\lambda}^{\perp} [  (\dot{D}_m + \dot{\lambda})u ]$   spans
$[\ker (D_m + \lambda)]^{\perp}$. It follows that the possible values
of $v$ are all functions of the form,
\begin{equation*}
v(x) = \int_M G_{m, \lambda} (x,y) f(y) dV(y),
\end{equation*}
where  $f \bot \ker (D_m(g) + \lambda)$. Thus,  $D_1 \alpha $
is  surjective to $\C$ unless for all $j \bot \ker (D_m(g) + \lambda)$,
either the real or imaginary parts of
$$G_{m, \lambda} (j)(x) =\int_M G_{m, \lambda}(x,y) j(y) dV(y) $$
vanish (or both) for every such $j$.

Since $j = [D_m(g) + \lambda] f$ where $\int f = 0$ we would  get  the absurd conclusion that
$$f(x) = 0, \;\; \forall f \bot \ker (D_m(g) + \lambda). $$
Equivalently,
$$G_{m, \lambda} (x,y) +  u_{\lambda}(x)  = 0.  $$
This is not possible and the contradiction ends the proof.
\end{proof}

\subsection{Multiplicity of the spectrum of $\Delta_g$}\label{simple}
We begin by observing that $\lambda_{m,j} = \lambda_{-m,j}$ and that $\phi_{-m,j} = \overline{\phi_{m,j}}$. Then any real eigenfunction which is a linear combination of $\phi_{m,j}$ and $\phi_{-m,j}$ is
\[
\Re \left(e^{i\theta_0}\phi_{m,j}\right)
\]
for some constant $\theta_0$. In local coordinates, $\phi_{m,j}$ is $\tilde{\phi}(z)e^{im\theta}$, and therefore we see that
\[
\Re \left(e^{i\theta_0}\phi_{m,j}\right) = \Re \left(T_{\theta_0}\phi_{m,j}\right) = T_{\theta_0}\Re \left(\phi_{m,j}\right).
\]
For $m_1$ and $m_2$ such that $|m_1| \neq |m_2|$, we argue that $\lambda_{m_1,j_1} \neq \lambda_{m_2,j_2}$ is satisfied for an open dense subset of metric $G$. This immediately implies the first, third, and the fourth statement of Theorem \ref{MAIN}. Note that the eigenvalue moves continuously with respect to $G$. So it is sufficient to prove that
\begin{lemm}\label{simplesimple}
Let $P \to X$ be a non-trivial principal $S^1$ bundle. Fix integers $m_1$ and $m_2$ such that $|m_1| \neq |m_2|$.  Among all $S^1$-invariant metric $G$ on $P$, $G$ satisfying $\lambda_{m_1,j_1} \neq \lambda_{m_2,j_2}$ is dense.
\end{lemm}
\begin{proof}
The deformation of the base of the Kaluza-Klein  metric
  does not touch the vertical operator $\frac{\partial}{\partial \theta}$
  and therefore the first order perturbation equations for infinitesimal
  deformations of the base metric $g$ gives,
    \begin{equation} \label{EIG4}
      (\dot{\Delta}_H +   \dot{\lambda}_{m,j}) \phi_{m,j} =  (\Delta_H+\lambda_{m,j} + m^2)  \dot{\phi}_{m,j}
    \end{equation}
   Taking the inner product  with $\phi_{m,j}$  gives
    \begin{equation} \label{EIG5}
      -  \dot{\lambda}_{m,j} =  \langle \dot{\Delta}_H  \phi_{m,j},  \phi_{m,j}\rangle.
   \end{equation}
If there exist weights $m_1 \neq m_2$ for which we cannot split the eigenvalue
$\lambda_{m_1,j_1} = \lambda_{m_2, j_2}$ then for all infinitesimal base perturbations
$\rho$ we get
   \begin{equation} \label{EIG6} \begin{array}{l}    \dot{\lambda}_{m_1,j_1} =   \dot{\lambda}_{m_2,j_2} \\ \\
   \iff \langle \dot{\Delta}_H  \phi_{m_1,j_1},  \phi_{m_1,j_1}\rangle = \langle \dot{\Delta}_H \phi_{m_2,j_2}, \phi_{m_2,j_2} \rangle.
   \end{array} \end{equation}
   Write $\phi_{m,j} = f_{m,j} (dz)^m$. Differentiation of the eigenvalue equation therefore gives the well-known formula
    $$\langle \dot{D}_{m_1} f_{m_1, j_1}, f_{m_1, j_1} \rangle
    = \langle \dot{D}_{m_2} g_{m_2, j_2}, g_{m_2, j_2} \rangle$$
    for every variation of $g$, where the inner product is that of $g_0$.

    Recall from previous section that $\dot{\Delta}_H = \rho \Delta_H$. Because $-\Delta_H \phi_{m,j} = (\lambda_{m,j} - m^2) \phi_{m,j}$ we have for any $\rho \in C^{\infty}(X)$,
\begin{multline*}
(\lambda_{m_1,j_1} - m_1^2)  \int_X \rho | f_{m_1, j_1}|^2 e^{-m_1 \phi} dA_0 \\
= (\lambda_{m_2,j_2} - m_2^2)  \int_X \rho | f_{m_2, j_2}|^2 e^{-m_2 \phi} dA_0.
\end{multline*}
   Thus,
   $$(\lambda_{m_1,j_1} - m_1^2)   | f_{m_1, j_1}|^2 e^{-m_1 \phi}= (\lambda_{m_2,j_2} - m_2^2)   | f_{m_2, j_2}|^2 e^{-m_2 \phi}. $$

   Integrating both sides against $dV_g$ and using that both eigenfunctions
   are $L^2$ normalized gives
      $$(\lambda_{m_1,j_1} - m_1^2) =    (\lambda_{m_2,j_2} - m_2^2),$$
  i.e., $|m_1|=|m_2|$.
\end{proof}

\section{\label{LOCALSECT} Local structure of eigensections at zeros }

To study the nodal sets of real and imaginary parts of Kaluza-Klein Laplacians, we first
study the zeros of the associated sections of the line bundles. For simplicity of exposition,
we assume that $L = K$ and describe the zero sets of eigen-$m$-differentials. Essentially
the same discussion is valid for other line bundles.

We follow the notation and terminology in the theory of holomorphic quadratic differentials,
even though our eigendifferentials are $C^{\infty}$, usually not holomorphic and of
general weight $m$.
Following a standard terminology for quadratic differentials, we call
a point $z$ such that $f_{m,r_j}(z)\not= 0$ a ``regular point'' and
a point where $f_{m,r_j}(z) = 0$ a ``critical point'' or a ``singular point''.

 After the first version of this article was written, we  located some recent articles generalizing the geometric properties of quadratic
 differentials on Riemann surfaces to $C^{\infty}$  higher order differentials \cite{FN12, AM17}
 and to other line bundles. We now use the terminology and results of these articles but have retained some
 from our first version since it is important for us to lift to $P_h$.

\subsection{Trajectories of eigen-differentials. }

The real and imaginary parts of the  eigendifferentials $\omega_{m,j} =f_{m,j}(z) (dz)^m$ are called binary differentials of degree $m$
and the equation for the  zero set of $\Im \omega_{m,j}$  is called a  binary differential equation of degree $m$ \cite{FN12}. It is traditional to consider the nodal set $\Im f_{m,j}(z) (dz)^m = 0$. If there exist exactly $m$
solutions at a regular point where $\omega_{m,j}(z) \not=0$ then $\omega_{m,j}$ is called {\it totally real} in \cite{FN12}.
Our $m$-differentials are of a special type since they are real and imaginary parts of $f_{m,j}(z) (dz)^m$ and therefore only
have terms of the form $(dz)^m$ or $(d\bar{z})^m$.
The following is the key input into Proposition \ref{COVER}.

\begin{lemm} \label{REG} $\Im f_{m,j} (dz)^m$ is a totally real $m$-differential.  At a regular point $z$, there exist
 $m$ distinct  solutions $v$ of  $\Im f_{m,j} (dz)^m(v)= 0$ in $T_z X$.

 \end{lemm}

  \begin{proof}
It f $v = (\cos \phi, \sin \phi)$,
then in the notation of \eqref{ubab}, the equation is
$$ \left(a_{m,j} \mathfrak{c}_m -  b_{m,j} \mathfrak{s}_m\right)(\cos \theta, \sin \theta) = 0. $$
Here $\mathfrak{c}_m = \Re (\cos \theta + i \sin \theta)^m = \cos m \theta,$ and the equation is
\[
a_{m,j}(z) \cos m \theta -  b_{m,j}(z) \sin m \theta = 0 \iff \tan m \theta = \frac{a_{m,j}}{b_{m,j}},
\]
where $a_{m,j}, b_{m,j} \in \R$ and where we assume with no loss of generality that $b_{m,j} \not= 0$.
Since the principal branch of $\tan^{-1}: \R \to (-\pi/2, \pi/2)$ is one-to-one, there exists precisely one solution
$\theta_0$ of $\tan m \theta = \frac{a_{m,j}}{b_{m,j}}$ with $m \theta \in (-\pi/2, \pi/2)$, namely the principal branch of
$\tan^{-1} (\frac{1}{m} \frac{a_{m,j}}{b_{m,j}}).$ Since $\tan \theta$ is $\pi$-periodic,  $\tan m \theta$ is $\frac{\pi}{m}$-periodic, and the full set
of solutions is $\theta_0 + k \frac{ \pi}{m}$ with $k = 0, \dots, m-1$.
\end{proof}

 The kernel of $\Im f_{m,j} (dz)^m$ defines a  smooth $m$-valued distribution on $X$ with singularities where $\omega_{m,j} = f_{m,j} (dz)^m= 0$.
The $m$ line fields defines a {\it web} of m transverse singular foliations, whose leaves are called  the {\it  trajectories}:

\begin{defi}

The trajectories of the $m$ differential $f_{m,j}(dz)^m$ are the  integral curves of the  kernel of $\Im f_{m,j} (dz)^m$, i.e.
the trajectories are the (smooth) curves $\gamma(t) $  in $X$ along which $\Im \phi_{m,j}(\gamma(t), \gamma'(t)) = 0$.  \end{defi}

\begin{rema}
A trajectory in this sense of this article is called a `horizontal trajectory' in \cite[Definition 5.5.3]{Str}.
They are illustrated in
\cite[Section 7]{Str} for holomorphic quadratic differentials. Illustrations of webs for higher order real differentials can
be found in \cite{FN12}.

\end{rema}

Trajectories downstairs on $X$  lift to $P_h$ by their tangent vectors.
  A trajectory $\gamma_{z_0, \theta_0}(t)$
downstairs is a smooth curve along which  $$\Im (\phi_{m,j} (\gamma_{z_0, \theta_0}(t), \dot{\gamma}_{z_0, \theta_0}(t)) = 0.$$
It lifts to a smooth curve $ (\gamma_{z_0, \theta_0}(t), \dot{\gamma}_{z_0, \theta_0}(t))$  in the nodal set upstairs.   Since $d \pi$ is an isomorphism, the trajectories
are special curves on the nodal set $\Im \phi_{m,j} = 0$.

\subsection{Non-degenerate singular points}

The structure of the trajectories through a singular (zero) may be complicated in general if no conditions
are placed on the degeneracy of the zeros. The  purpose
of Theorem \ref{GENERIC1} is to allow us to assume that the zeros are of first order, so that they are
isolated and non-degenerate.

The structure of the trajectories of a totally real $m$-differential near an isolated singular point is discussed  in \cite{FN12}.
As with vector fields, the key topological invariant of the singular point is its {\it index}


\begin{defi} The index of a singular point $z_0$ where
\[
f_{m,j}(z_0) (dz)^m = 0
\]
is related to the  degree of the circle map defined
by
\[
\delta(t) = z_0 + r e^{it} \to \frac{f_{m,j}(\delta(t))}{|f_{m,j}(\delta(t))|}
\]
on a small circle around $z_0$ to $S^1$ by
\[
\mathrm{ind}(z_0) = \frac{\pm 1}{m}  \mathrm{deg} \frac{f_{m,j}(\delta(t))}{|f_{m,j}(\delta(t))|}.
\]
Equivalently, in a small circle $C$ around $z_0$,  choose a unit vector $X(0) \in \ker \Im f_{m,j} (dz)^m |_{C(0)}$
where $C(t): [0,2\pi] \to X$ is a constant speed  parametrization of $C$ and let $\ell = L(C)$ be its length.  Let  $X(t)$ be a smooth extension of $X(0)$ along $C(t)$.
After a complete turn, $X(2\pi)$ must be one of the $2m$ solutions of $\omega(X) =0$. After $2m$ turns $X(2m \ell) = 0.$ Let $\theta(t)$
be a smooth determination of the angle between the tangent line to $C$ and $X(t)$. Then $\theta(2m \ell)$ and $\theta(0)$ differ
by an integer multiple of $2 \pi$. The index of $z_0$ is defined by
$$\mathrm{ind} (\omega, z_0) = \frac{\theta(2 \pi m \ell) - \theta(0)}{4 \pi m}. $$
 \end{defi}
 Thus, the index has the form $\frac{s}{2m}$ with $s \in \Z$.
The following Lemma shows that singular points must exist when the genus of $X$ is non-zero.

\begin{lemm} If $f_{m,j}(dz)^m$ has isolated non-degenerate  zeros, then the sum of the indices of the zeros is
 the Chern class of $K_X^m$. \end{lemm}




\begin{lemm}  If $z_0$ is a non-degenerate singular point (zero of order $1$) of $f_{m,j}(dz)^m$,
then $\mathrm{ind}(\omega_{m,j}, z_0) = \frac{\pm 1}{m}$.  \end{lemm}

\begin{proof} This follows from the fact that $f_{m,j}$ is linear in this case and hence the degree of the
associated circle map is $\pm 1$.\end{proof}

\begin{prop} \label{GENINDEX} For a generic Riemannian metric $g$ on $X$, all singular points of all eigendifferentials of $\nabla^*\nabla$
on $K^m$ have index $\frac{\pm 1}{m}$ for all $m \not= 0$. \end{prop}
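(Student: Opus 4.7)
The plan is to deduce Proposition \ref{GENINDEX} as an essentially immediate consequence of Theorem \ref{GENERIC1} combined with the preceding lemma on the index of a non-degenerate zero. First I would observe that the statement "all singular points have index $\pm 1/m$" is equivalent, by the lemma immediately preceding the proposition, to the statement "all zeros of $f_{m,j}$ are of order one." So the content of the proposition reduces to showing that generic metrics $g$ render all zeros of all eigensections $f_{m,j}(dz)^m$ simple.

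Next I would invoke case (iv) of Theorem \ref{GENERIC1}: for the family of Bochner Laplacians $\nabla_m^*\nabla_m$ on $K_X^m$ parameterized by $g$ in the conformal class of $J$, the set
\[
\mathcal{G}_m := \{g \in \gcal^r(X) : \text{every eigensection of } \nabla_m^*\nabla_m \text{ has } 0 \text{ as a regular value}\}
\]
is residual. Regularity of $0$ as a value of a section $f_{m,j}(dz)^m$ means that at each zero $z_0$ the differential $df_{m,j}(z_0) : T_{z_0}X \to \C$ is surjective, which is precisely the non-degeneracy condition (i.e., the zero is of first order). By the preceding lemma, each such zero has index $\pm 1/m$.

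To conclude for all $m$ simultaneously, I would take the countable intersection
\[
\mathcal{G} := \bigcap_{m \in \Z \setminus \{0\}} \mathcal{G}_m.
\]
Each $\mathcal{G}_m$ is residual in $\gcal^r(X)$, and a countable intersection of residual sets in a Baire space is residual. Hence $\mathcal{G}$ is a residual subset of $\gcal^r(X)$ and for any $g \in \mathcal{G}$ every eigendifferential $f_{m,j}(dz)^m$ (for all $m \neq 0$ and all $j$) has only simple zeros, all of index $\pm 1/m$.

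The only mildly subtle point is the countable-intersection step: it requires that within each fixed weight $m$ the spectrum is discrete and the eigensections countable, so that the "all eigensections" condition really is a countable conjunction of open-dense conditions inside $\mathcal{G}_m$. This is built into Uhlenbeck's framework used to prove Theorem \ref{GENERIC1}, and indexing by the pair $(m,j)$ remains countable. No new analytic input beyond Theorem \ref{GENERIC1} is needed.
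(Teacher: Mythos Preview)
Your proposal is correct and follows the same logical skeleton as the paper: invoke Theorem \ref{GENERIC1} (case (iv)) to obtain that generically every eigensection has $0$ as a regular value, identify this with all zeros being of first order, and then cite the preceding lemma to convert first-order zeros into index $\pm 1/m$. Your explicit countable-intersection step over $m$ is a clean detail the paper leaves implicit.

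The one difference worth noting is that the paper embeds, inside its proof of this proposition, a Bers-type local expansion lemma showing that the leading homogeneous term of an eigensection at a zero is a harmonic polynomial (hence, at a non-degenerate zero, linear of the form $a\,\Re z + ib\,\Im z$). This lemma is not logically required to prove Proposition \ref{GENINDEX} as stated --- the implication ``$0$ regular value $\Rightarrow$ zero of order one $\Rightarrow$ index $\pm 1/m$'' goes through without it, exactly as you argue. The paper includes the Bers analysis here because it needs the explicit local normal form later, when analyzing how the sheets of the nodal cover glue along singular fibers in Section \ref{NODALDOMAINSECT1}. So your proof is a legitimate and more economical route to the proposition itself; you simply haven't reproduced the auxiliary local-structure lemma that the paper chose to house inside this proof.
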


\begin{proof} It  is part of  Theorem \ref{GENERIC1}, all singular points are non-degenerate.
To prove this it suffices to show that the coefficients $f_{m,j}$ are linear near each singular point. This follows
from the   Bers local formula for eigensections around a zero. We use Proposition \ref{ddbarprop} to Taylor expand the operator
 $$\begin{array}{lll} D_m =  \nabla_m^* \nabla_m &=&  2 g^{1 \bar{1}} \frac{\partial^2}{\partial z \partial \bar{z}}  f - 2 m [\frac{\partial f}{\partial \bar{z}} g^{1 \bar{1}}]  \frac{\partial \phi}{\partial z} + K f, \end{array} $$
 around a nodal point.

Let $p$ be a nodal point of $f_{m,j}$.   We  Taylor expand the coefficients in \kahler normal coordinates for $(J, g)$
 in a disc $z \in D(p, r)$  to get
 \begin{itemize}
 \item $g^{1 \bar{1}} = 1 + K(p) |z|^2 + \cdots$; \bigskip

 \item $ \frac{\partial \phi}{\partial z} =  \frac{\partial \phi}{\partial z}(p) + \omega_p \bar{z} + \cdots = \bar{z}+ \cdots  . $

 \end{itemize}
 Thus, the {\it osculating} constant coefficient operator is
 $$D_m^p f = 2 \frac{\partial^2}{\partial z \partial \bar{z}}  f
  + K(p) f. $$

    Let $\pcal_k$ denote homogeneous polynomials of degree $k$ in $z, \bar{z}$.  It is better to arrange the terms of the Taylor expansion of $D_m$ at $p$ into
 terms
 $$D_m = L_{-2} + L_{-1} + L_0 + L_1 + \cdots \cdots $$
 where $L_j : \pcal_k \to \pcal_{k + j}.$

 Thus, $L_{-2} = \frac{\partial^2}{\partial z \bar{z}}$,
 $L_{-1} = 0$, $L_0 = K_{[2]} \frac{\partial^2}{\partial z \bar{z}}   - 2 m [\frac{\partial f}{\partial \bar{z}}] \bar{z} + K(p)$ etc. Note that $L_{-1} = 0$ because $d g^{1 \bar{1}}(p) = 0$ and $\partial \phi(p) = 0$, so neither
 the second or first derivative terms contribute at this order.

 Also expand
 \begin{multline*}
 f(z) =\\
  f_1(p) z  + f_{\bar{1}}(p) \bar{z} + f_{11} (p) z^2 + f_{1 \bar{1}}(p) |z|^2 + f_{\bar{1} \bar{1}} (\bar{z})^2 + \cdots + f_{[k]} + \cdots,
  \end{multline*}
 where $f_{[k]} \in \pcal_k$ is homogeneous of order $k$.

The following is the generalization of the Bers local expansion theorem
to complex line bundles.
\begin{lemm} Let $z_0$ be a zero of $f_{m,j} (dz)^m$. The first non-zero homogeneous term $f_{[n]}$ of the Taylor expansion of an eigenfunction is a harmonic homogeneous polynomial. If the order of vanishing is $n$, $f_{[n]}(z) = a \Re z^n + i b \Im z^n$.
In particular, at a non-degenerate zero, the first homogeneous term is $f_{m,j} = a \Re z +i b \Im z.$
\end{lemm}

\begin{proof}

 It is evident that
 $L_{-2} =  \frac{\partial^2}{\partial z \partial \bar{z}}: \pcal_k \to \pcal_{k-2}$. If $f_{[k]}$ is the term of lowest
 degree in the expansion of $f$ then $\frac{\partial^2}{\partial z \partial \bar{z}} f_{[k]}= 0$, i.e., $f_{[k]} $ is a homogeneous harmonic polynomial.
 In real dimension $2$ the only possibilities are linear combinations of the  real and imaginary
 parts of $z^k$. By a well-known argument, the nodal set of the real and
 imaginary parts of $f$ are topologically equivalent to those of the leading
 order homogeneous term.


 \end{proof}

This completes the proof of the Proposition.

 \end{proof}

\section{Adapted Kaluza-Klein metrics}

All of the Kaluza-Klein  metrics are Riemannian metrics on principal $S^1$ bundles
$P_h \to X$ associated to $C^{\infty} $ complex Hermitian line bundles $L \to X$.  Given $P_h$ we recover $L$ as an associated line bundle.
 Let $(X,g)$ be any Riemannian
surface. We denote the genus of $X$ by $g$.  Let $(L, h) \to X$ be any complex line bundle with Hermitian metric $h$. Associated to $L$
is the $U(1)$  bundle $P_h$ of orthonormal frames.
 Let $T= \frac{\partial}{\partial \theta} $
generate the $S^1 \cong U(1)$ action. We endow $P_h$
with a connection $\alpha$, that is, an $S^1$ invariant $1$-form on $P_h$ such that $\alpha(T)=1$.

The connection defines a splitting $$T_p P_h = H_p \oplus V_p$$
into horizontal and vertical spaces. The vertical space is given by orbits
of the $S^1$ action. The horizontal space is defined by $H_p = \ker \alpha$ and is
isomorphic under $d\pi_p$ to $T_z X$ where $\pi(p) = z$.

\begin{defi} \label{SASAKIDEF2}The Kaluza-Klein metric on
 $P_h$ is the  $S^1$-invariant metric $G$ such that the horizontal
space $H_p  : = \ker d\alpha$ is isometric to $T_{\pi(p)} X$, so that $V = \R \frac{\partial}{\partial \theta}$ is
orthogonal to $H$ and is invariant under the natural $S^1$ action and so that the fiber  is a unit speed geodesic. \end{defi}

A Kaluza-Klein  metric on the principal $S^1$ bundle $P_h$  is thus determined by the pair $(g,\alpha)$ where
$g$ is a metric on $X$ and where $\alpha$ is a connection $1$-form on $F$. In general, the metric
and connection are chosen independently. In Section \ref{SPECIALC} we discuss the orthonormal
frame bundle,  where
$\alpha$ is the Riemannian connection of  $g$.

Given $P_h$ and any character $\chi_m =e^{im \theta}$ of $S^1$ we obtain associated line bundles (resp, real rank 2
bundles) by
$$L^m = P_h \times_{\chi_m} \C. $$
For purposes of this paper it may be assumed that $m \geq 0$.

We often assume
that $X$ is equipped with a complex structure $J$ and that $L$ is a holomorphic line bundle.
Let $D_h^* \subset L^*$ be the unit co-disc bundle with respect to $h$ and let
$P_h = \partial D_h^* $ be its boundary, an $S^1$ bundle $\pi: P_h \to X$. \bigskip

\begin{rema}
Not all $S^1$ invariant metrics on  $P_h$ are adapted Kaluza-Klein  metrics. It would be interesting to consider
more general $S^1$-invariant metrics on $SX$ or  on other  manifolds (of all dimensions),
as well as invariant metrics under more general compact Lie groups. \end{rema}

\subsection{\label{ADAPTEDSECT} Geometry and analysis of  Kaluza-Klein metrics}

We use the term Kaluza-Klein metric or Kaluza-Klein metric in the sense of Definition \ref{SASAKIDEF2}
to denote metrics $G$ on the unit tangent bundles $\pi: P_h = S X \to X$ over surfaces for which
the vertical and horizontal spaces are orthogonal and which is invariant under the  free  $S^1= SO(2)$
action \footnote{A free action is one for which all isotropy groups are trivial.}.  They
are special cases of Riemannian submersions with totally geodesic fibers isometric to
$\R/2 \pi \Z$.

\begin{defi}\label{ADAPTEDDEF}
If $H$ is a compact Lie group, and if  $\pi: M \to B$ is a  principal $H$-bundle with fiber $F$,
then one says that a connection $\theta$ is an $H$-connection for $\pi$ if the $H$ action preserves
the horizontal spaces and preserves the connection $1$-form. One says that a metric $G$
is {\it adapted} to $H$ if the fibers $\pi^{-1}(b)$ are totally geodesic and isometric to $F$ and
such that the horizontal distribution of $\theta$ is the orthogonal complement to the vertical.
\end{defi}

The following Lemma
gives details on the equivalences of the various conditions and  is implicitly contained
in \cite[Example 2.1]{BBB} and is proved in  \cite[Theorem 3.5]{V}. Hence we only sketch
the proof.
\begin{lemm} Suppose that $S^1$ acts freely on $M$ and that $G$ is an  $S^1$ invariant metric
for which all orbits are geodesics isometric to $\R/2 \pi \Z$. Then $G$ is a Kaluza-Klein metric and $\pi: M \to M/S^1$ is a Riemannian submersion with totally geodesic fibers
isometric to $\R/2 \pi \Z$. \end{lemm}

\begin{proof}  Under the freeness assumption, we have an $S^1$ bundle $\pi: M \to X : = M/S^1$.
Let $V = \R \frac{\partial}{\partial \theta}$ be the vertical space, i.e., the  tangent space to the orbits.
Let $H_x = V_x^{\perp}$.
The metric $G$ determines a quotient Riemannian metric on $X$ using the isomorphisms
$d\pi_x:  H_x \to T_{\pi(x)} X. $  By assumption, $|\frac{\partial}{\partial \theta}|_G = 1$ if we identify
$S^1 = \R /(2 \pi \Z$. The only non-trivial statement is that the orbits are geodesics. Let $\alpha(t)$
denote the orbit of a point $x$ under the $S^1$ action. Let $v(0)$ be
a horizontal vector at $\alpha(0)$. Let $\sigma(t,s)$ be the parallel translation of $\alpha(t)$ along
a curve in $X$ with initial tangent vector $d \pi(v(0))$. Then $\frac{\partial}{\partial s} \sigma(t, 0)$
is a horizontal vector field along $\alpha(t)$. The arclength of the curve $t \to \sigma(s, t)$ is constant
in $s$.  The first variation formula implies that $\alpha(t)$ is geodesic.

\end{proof}

These adapted metrics are special cases of invariant metrics on an $S^1$-manifold $M$. For instance, in the case of the  non-free action of $S^1$ on the standard  $S^2$ by rotations around the $x_3$-axis,
$|\frac{\partial}{\partial \theta}|_{g_0}$ varies with the orbit, and almost no orbits are geodesics. It would
be interesting to consider generalizations to all $S^1$ invariant metrics.

Sections of $K^m$, i.e., differentials of type $(dz)^m$, lift to the
dual line bundle $K^* = T^{(1,0)}$ as equivariant scalar functions
$F: K^* \to \C$ transforming by $e^{im \theta}$ under the $S^1$ action
of rotating a frame. Using the metric $g$ we form the unit tangent
bundle $\pi: SX \to X$.  In this section, we review the relevant formulae for
lifted operators and review the fact that the Bochner Laplacians are
Fourier components of the horizontal Laplacian on $SX$.

\subsection{\label{LIFTSECT} Lifts to $P_h$}
The natural inner product on $L^2(P_h, dV_G)$ is given by
$$\langle f, f \rangle = \int_{P_h} |f|^2 dV_G.$$

Sections $s$ of $L^m$ naturally lift to $L^*$ and $F_h$ by
$$\hat{s}(z, \lambda):= \lambda(s(z)).$$
It is straightforward to check that the lift of $s \in C(X, L^m)$ satisfies
$\hat{s}(r_{\theta} x) = e^{im \theta} \hat{s}(x)$ and that
$$\int_{F_h} |\hat{s}(x)|^2 dV_G = \int_X \|s(z)\|_{h^m}^2 dA_g.$$
Indeed, if $x = r_{\theta} \frac{e_{L^*}(z)}{\|e_{L^*}(z)\|}$ then
$\hat{s}(x) = e^{i m \theta}\|e_L(z)\|_{h^m}^{m} $.

In the case of $L= K_X$,
the lift has the form,
\begin{equation*} \widehat{f (dz)^m} (Y) = f (dz(Y))^m. \end{equation*}
We define a orthonormal frame of $T^*X$ by $\omega_1 = e^{-\phi} dz: =  \frac{dz}{|dz|_h}$ as above, and let $\|\frac{\partial}{\partial z} \|^{-1} \frac{\partial}{\partial z}= e^{\phi} \frac{\partial}{\partial z}$ be the dual frame. In local coordinates
$z,\bar{z}$ on $X$ and in this local frame we define local coordinates $(z, \bar{z}, \theta)$ on $S X$ corresponding
to the point $e^{i \theta}   e^{\phi} \frac{\partial}{\partial z}.$

Then $(dz)^m$ lifts to the function,
$$e_m(z, \bar{z}, \theta) = \widehat{(dz)^m}(e^{i \theta}   e^{\phi} \frac{\partial}{\partial z}) = e^{i m\theta} e^{m \phi(z)}. $$
Consequently,  the eigendifferential $f_{m,j} (dz)^m$ lifts to
$$\phi_{m,j}(z, \bar{z}, \theta) =  f_{m,j}(z) e^{i m\theta} e^{m \phi(z)}. $$
In \eqref{uv} we decomposed the lift into real and imaginary parts. We
now relate them to the real and imaginary parts of $f_{m,j}$.

If we take the inner product of $u_{m,j}$ and $ v_{m,j}$ just along the fiber
and use orthogonality of $\cos m \theta, \sin m \theta$ and that $\int_0^{2 \pi}  (\cos^2 m \theta - \sin^2 m \theta) d \theta = 0$, and then integrate in $dA(z)$
we get

\begin{lemm}
$\begin{array}{l} \langle u_{m,j}, v_{m,j} \rangle = 0.\end{array}. $
\end{lemm}

\subsection{Eigenspace decompositions}

 The Kaluza-Klein Laplacian has the form
\begin{equation*}
\Delta_G = \Delta_H +  \frac{\partial^2}{\partial \theta^2},\;\;\;  \mathrm{where}\; \Delta_H = \xi_1^2  + \xi_2^2
\end{equation*}
is the horizontal Laplacian. The fact that the fiber Laplacian is $ \frac{\partial^2}{\partial \theta^2}$ reflects
the fact that $S^1$ orbits are geodesics isometric to $\R/2 \pi \Z$.

The weight spaces
are $\Delta_H$-invariant, i.e., as an unbounded self-adjoint operator,
$$\Delta_H: \hcal_m \to \hcal_m. $$
Under the  canonical identification
$$\hcal_m \cong L^2(X, L^m) $$
using the lifting map and $\Delta_H |_{\hcal_m} \cong D_m - m^2 I$ under the lifting map.

We then consider joint eigenfunctions $\phi_{m, j}$ of the Kaluza-Klein Laplacian $\Delta_G$ and of $\frac{\partial}{\partial \theta}$.
 The commutation relations show that $[\Delta_G,  \frac{\partial^2}{\partial \theta^2}] = 0.$

\begin{lemm}\label{BOCHNERLEM} The Bochner Laplacian agrees with the horizontal Laplacian $\Delta_H$. In the above local coordinates and frame,
$$ \widehat{\nabla_m^* \nabla_m (f (dz)^m)} =
\Delta_H   \widehat{(f (dz)^m)}.$$
\end{lemm}


Note that  except for the last identity, these statements are true for any isometric $S^1$ action, not just for adapted Kaluza-Klein metrics.

\subsection{Equivariant decomposition}

Since $S^1$ acts isometrically on $(M, G)$ we may decompose into its weight spaces,
\begin{equation*}
L^2(M, dV_G) = \bigoplus_{m \in \Z} \hcal_m,
\end{equation*}
where $$\hcal_m = \{f: M \to \C: f(e^{i \theta} x) = e^{i m \theta} f(x)\}. $$ The weight spaces
are $\Delta_H$-invariant, i.e., as an unbounded self-adjoint operator,
$$\Delta_H: \hcal_m \to \hcal_m. $$
The lifting map gives  a canonical identification
$$\hcal_m \cong L^2(X, L^m). $$

\section{Connectivity of nodal sets of Kaluza-Klein eigenfunctions}


Given the preparations in Section \ref{LOCALSECT}, it is now a simple matter to prove Theorem \ref{NODALCON}.
The following is an immediate consequence of Lemma \ref{REG}:

\begin{lemm} \label{COVERLEM}  If $0$ is a regular value, then $\ncal_{u_{m,j}} \subset S X$ is a singular $2m$-fold cover of $X$ with blow-down singularities over  points where $f_{m,j}(z) (dz)^m = 0$.
\end{lemm}

Indeed, the $2m$ zeros of $\Im \omega_{m,j}(v) = 0$ in $S_z X$ give $2m$ points on the fiber $\pi^{-1}(z)$ in $P_h$. Since
locally there exist $2m$ smooth determinations of the zeros, the nodal set is a covering map away from the singular points.

We have separated Lemma \ref{COVERLEM} from further geometric results on the map  $\pi : \ncal_{u_{m,j}}  \to X$  in the next section since it
was stated separately from those results in Theorem \ref{NODALCON}.

\begin{rema} In the literature,  $\pi : \ncal_{u_{m,j}}  \to X$  is sometimes
called a branched cover \cite[Section 4]{AM17}, but as J. Y. Welschinger explained to us, the terminology is misleading since smooth branched covers are supposed to
have $z^{1/m}$ singularities over the branch points,  just as for holomorphic branched covers, while the inverse image of a zero of $f_{m,j}$ is an $S^1$ orbit and the singularity is blown up. In some sense, $\pi$ is locally like the projection of a vertical helicoid onto the horizontal plane.

\end{rema}





\section{\label{NODALDOMAINSECT1} Nodal domains of real and imaginary parts}

We now give a sketch of the proof of Theorem \ref{MAIN}.

 By Proposition \ref{COVER} (and \eqref{SIGMADEF}), $\ncal_{\Im \phi_{m,j}}   \backslash (\ncal_{\Im \phi_{m,j}} \cap \Sigma) \to X \backslash \zcal_{f_{m,j}}$
is a $2m$-sheeted cover. Moreover, $P_h \backslash \Sigma \to X$ is an $S^1$ bundle and
\begin{equation*}i
(P_h \backslash \Sigma ) \backslash \ncal_{\Im \phi_{m,j}} \to X
\end{equation*}
is a fiber bundle whose fibers consist of the punctured fibers $\pi^{-1}(z) \backslash \ncal_{\Im \phi_{m,j}}$. The connected components
of each punctured fiber consist of `arcs' along which $\Im \phi_{m,j}$ has a constant sign. We therefore express it as
\begin{equation*}
(P_h \backslash \Sigma ) \backslash \ncal_{\Im \phi_{m,j}} = \pcal_+ \bigcup \pcal_-
\end{equation*}
where $\mathrm{sign} \Im \phi_{m,j} = \pm$ in $\pcal_{\pm}$. Each $\pi: \pcal_{\pm} \to X$ is a fiber bundle whose fiber consists
of $m$ arcs of the fibers of $\pi: P_h \to X$.  Since the number of zeros in each regular fiber is $2m$, the
number of connected components of $\pcal_{\pm}$ is $\leq m$. When we take the closure of these sets (i.e., add in the
singular fibers, on which $\Im \phi_{m.j} = 0$, the connected components of the closure are the nodal domains. It follows
that there are $\leq 2m$ nodal domains. We now argue that the closure of $\pcal_{\pm}$ is connected, so that there
exist exactly $2$ nodal domains.

We now use the local analysis in Section \ref{LOCALSECT} of eigendifferentials of generic Bochner Laplacians around their zeros to determine
how the sheets are connected at the singular fibers $\ccal_j = \pi^{-1}(z_j)$,
corresponding to singular points (i.e., zeros) of $f_{m,j}(dz)^m$ i.e., we consider the maximal components $\pcal_{\pm, j}$ of
$$  \pcal_{\pm}  \backslash \bigcup_{j =1}^m \ccal_j = \bigcup_{j=1}^m \pcal_{\pm, j}, $$
in which $\Im \phi_{m,j}$ has a single sign. When we union the left side with $ \bigcup_{j =1}^m \ccal_j $
we glue together some of these domains along intervals of the singular fibers.

 The gluing rule for the nodal domains  is determined by the gluing rule for the nodal set, since the boundary of the
 the each nodal domain is the nodal set. From the downstairs point of view, the gluing rule is the monodromy of
 the cover  $\ncal_{u_{jm,j}} \to X \backslash Z(\omega_{m,j}) $ If we fix a singular point $z_0$, then we get a monodromy representation
$$\rho: \pi_1( X \backslash Z(\omega_{m,j})) \to Aut(\pi^{-1}(z_0)), $$
determining how  the sheets of the nodal set are changed as the point circles around $z_0$.

By Proposition \ref{GENINDEX}, the index of the singular points $z_0$ is $\frac{\pm 1}{m}$. In terms of the monodromy, this
means precisely that each turn around a circle $C$ enclosing $z_0$ lifts to an arc from one vector in the fiber to its nearest
neighbor with the same sign of $\Re \phi_{m,j}$ (i.e., skipping the neighboring vector of the opposite sign).

 It follows
  that both  the $+$ region and $-$ region  is connected in $P_h$. Hence there are just two nodal domains.

\subsection{ Counting the number of nodal domains}
We now give a more detailed presentation.

Let $D$ be an open disc. We first study connectivity of a certain graph that arise from a pair of partitions of $D$.

Let $P$ and $Q$ be partitions of $D$, i.e., $P$ (resp. $Q$) is a collection of disjoint open-sets $\Omega_P(1),\ldots, \Omega_P(n_P)\subseteq D$ (resp. $\Omega_Q(1),\ldots, \Omega_Q(n_Q)\subseteq D$) such that
\[
\overline{\cup_{k=1}^{n_P} \Omega_P(k)} =D ~(\text{resp. }\overline{\cup_{k=1}^{n_Q} \Omega_Q(k)} =D).
\]
Let $c_P: P \to \{0,1\}$ and $c_Q: P \to \{0,1\}$ be colorings of $P$ and $Q$, and define the inversions of $c_P$ and $c_Q$ by $c_P' = 1-c_P$ and $c_Q' = 1-c_Q$.

We now define a graph $G_m(P,Q,c_P,c_Q)$ as follows:

The vertex set is
\begin{align*}
\left\{\begin{array}{cccc}
v_{1,1}, & v_{1,2}, & \cdots & v_{1,n_P},\\
v_{2,1}, & v_{2,2}, & \cdots & v_{2,n_Q},\\
v_{3,1}, & v_{3,2}, & \cdots & v_{3,n_P},\\
v_{4,1}, & v_{4,2}, & \cdots & v_{4,n_Q},\\
&\vdots & &\\
v_{4m,1}, & v_{4m,2}, & \cdots & v_{4m,n_Q}
\end{array}\right\}
\end{align*}
and edges are
\begin{align*}
\{v_{4j, a},v_{4j+1, b}\} &\text{ such that } \Omega_Q(a)\cap \Omega_P(b) \neq \emptyset, \text{ and } c_Q'(\Omega_Q(a)) = c_P(\Omega_P(b)),\\
\{v_{4j+1, a},v_{4j+2, b}\}&\text{ such that } \Omega_P(a)\cap \Omega_Q(b) \neq \emptyset, \text{ and } c_P(\Omega_P(a)) = c_Q(\Omega_Q(b)),\\
\{v_{4j+2, a},v_{4j+3, b}\} &\text{ such that } \Omega_Q(a)\cap \Omega_P(b) \neq \emptyset, \text{ and } c_Q(\Omega_Q(a)) = c_P'(\Omega_P(b)),\text{ and}\\
\{v_{4j+3, a},v_{4j+4, b}\} &\text{ such that } \Omega_P(a)\cap \Omega_Q(b) \neq \emptyset, \text{ and } c_P'(\Omega_P(a)) = c_Q'(\Omega_Q(b))
\end{align*}
for $j=0,1,\ldots, m-1$ with the identification $v_{0,a} = v_{4m,a}$.
\begin{defi}
We say a pair of partitions $(P,Q)$ generic, if
\[
D-\left(\cup_{k=1}^{n_P} \Omega_P(k) \cup \cup_{k=1}^{n_Q} \Omega_Q(k)\right)
\]
does not contain a closed curve.
\end{defi}

\begin{lemm}\label{lem1}
For a generic pair of partitions $(P,Q)$ with any given colorings $c_P$ and $c_Q$, any connected component of $G_m(P,Q,c_P,c_Q)$  contains at least one of the following $2m$ vertices:
\[
v_{1,1},~ v_{3,1}, \ldots,~ v_{4m-3},~ v_{4m-1}.
\]
In particular, $G_m(P,Q,c_P,c_Q)$ has at most $2m$ connected components.
\end{lemm}
\begin{proof}
We first consider the case $m=1$. To claim $G_1(P,Q,c_P,c_Q)$ has only $2$ connected components, it is sufficient to prove that if $c_P(\Omega_P(a_1))=c_P(\Omega_P(a_2))$, then $v_{1,a_1}$ and $v_{1,a_2}$ are path-connected.

Because $(P,Q)$ is a generic pair, one can find a chain of open-sets
\[
\Omega_P(a_1)=\Omega_P(c_1),~\Omega_Q(b_1),~\Omega_P(c_2),~\Omega_Q(b_2),~\ldots, \Omega_P(c_k) = \Omega_P(a_2)
\]
such that two adjacent open-sets have non-trivial intersection.

Observe that if $\Omega_P(c) \cap \Omega_Q(b) \neq \emptyset$, then either
\[
\{v_{1,c}, v_{2,b}\}, ~ \text{or} ~\{v_{1,c}, v_{4,b}\}
\]
is an edge, and likewise either
\[
\{v_{3,c}, v_{2,b}\}, ~ \text{or} ~\{v_{3,c}, v_{4,b}\}
\]
is an edge.

Therefore the above chain of open-sets corresponds to a path connecting $v_{1,a_1}$ with either $v_{1,a_2}$ or $v_{3,a_2}$. However, from the assumption $c_P(\Omega_P(a_1))=c_P(\Omega_P(a_2))$, and from the construction of $G_1(P,Q,c_P,c_Q)$, $v_{1,a_1}$ cannot be connected to $v_{3,a_2}$, hence is connected to $v_{1,a_2}$.

Now for the rest, note that $G_m$ is an $m$-covering of $G_1$, and because $v_{1,1}$ and $v_{1,3}$ belongs to the different connected components of $G_1$, any connected components of $G_m$ must contain at least one vertex of the fiber of $v_{1,1}$ or $v_{1,3}$.
\end{proof}

For a large class of colorings, we can deduce a much stronger result.
\begin{lemm}\label{mainlem}
Let $(P,Q)$ be a generic pair of partitions. Assume that we are given with a pair of colorings $c_P$ and $c_Q$:

There exist four open sets $\Omega_P(a_1),\Omega_P(a_2), \Omega_Q(b_1),\Omega_Q(b_2)$ such that
\[
\Omega_P(a_i)\cap \Omega_Q(b_j) \neq \emptyset
\]
for $i=1,2$ and $j=1,2$, and that $c_P(\Omega_P(a_1))+c_P(\Omega_P(a_2))=c_Q(\Omega_Q(b_1))+c_Q(\Omega_Q(b_2))=1$.

Then the graph $G_m(P,Q,c_P,c_Q)$ has $2$ connected components.
\end{lemm}
\begin{proof}
Note that any connected component of $G_m$ must contain either one of $v_{4j,a_1}$ or one of $v_{4j,a_2}$ with $j=1,\ldots, m$, because $G_1$ has only two connected components.

Without loss of generality, assume that
\[
c_P(\Omega_P(a_1))=c_Q(\Omega_Q(b_1)).
\]
Then from the construction of the graph and from the assumption of the lemma
\begin{align*}
&\{v_{4j,a_1},v_{4j+1,b_1}\},\\
&\{v_{4j+1,b_1},v_{4j+2,a_2}\},\\
&\{v_{4j+2,a_2},v_{4j+3,b_2}\}, \\
\text{and} &\{v_{4j+3,b_2},v_{4j+4,a_1}\}
\end{align*}
are edges, hence $v_{4j, a_1}$  and $v_{4j+4, a_1}$ are connected. Likewise, $v_{4j, a_2}$ and $v_{4j+4, a_2}$ are connected. Therefore any connected component of $G_m$ must contain either $v_{4,a_1}$ or $v_{4,a_2}$.
\end{proof}
\subsection{The number of nodal domains of generic eigenfunctions}

Let $P$ be a principal $S^1$ bundle over a connected smooth compact Riemannian surface $X$ with the covering map $\pi: P \to X$. Let $m$ be a fixed integer, and assume that $\phi \in C^1(M)$ satisfies the following conditions:

\begin{condition}\label{cond}
For any small open $U\subset X$ such that $\pi^{-1}U \cong U \times S^1$, there exists a local coordinate $(x,\theta)$ of $\pi^{-1}U$ such that
\begin{enumerate}
\item[(i)]  $\phi(x,\theta) =  f(x) e^{i m \theta},$
\item[(ii)] the zero set of $\Re f$ (resp. $\Im f$) gives rise to a partition $P=P_U$ (resp. $Q=Q_U$) of $U$, and
\item[(iii)] $(P_U,Q_U)$ is a generic pair of partitions of $U$.
\end{enumerate}
\end{condition}

In this section, we prove the following theorem.
\begin{theo}\label{genthm}
Fix any point $x \in X$ such that $\phi (x,\theta) \neq 0$. Then any nodal domain of $\Re\phi $ has a nonempty intersection with $\pi^{-1}x$. In particular, the number of nodal domains of $\Re \phi$ is $\leq 2m$. Assume further that $\phi$ has a regular zero. Then the number of nodal domains of $\Re \phi$ is $2$.
\end{theo}

We begin with few observations in terms of fixed $U$ and a local coordinate $(x,\theta)$ of $\pi^{-1}U$.
\begin{prop}
If $\Re \phi$ is positive on two open sets $U_1 \subset \pi^{-1}U \cap \{\theta=\frac{k\pi}{2m}\}$ and $U_2 \subset \pi^{-1}U \cap \{\theta=\frac{(k+1)\pi}{2m}\}$ for some integer $k$, and if $\pi U_1 \cap \pi U_2 \neq \emptyset$, then $U_1$ and $U_2$ are contained in the same nodal domain of $\Re \phi$.
\end{prop}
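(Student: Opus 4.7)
The plan is to connect a chosen point of $U_1$ to a chosen point of $U_2$ by a vertical arc inside a single fiber of $\pi$, and to show by a direct trigonometric calculation that $\Re\phi$ remains strictly positive along that arc. Pick any $x_0\in \pi U_1 \cap \pi U_2$; since each slice $\pi^{-1}U\cap\{\theta=c\}$ is a graph over $U$, the points $p_1 = (x_0,\, k\pi/(2m))$ and $p_2 = (x_0,\,(k+1)\pi/(2m))$ lie in $U_1$ and $U_2$ respectively, and I would take the candidate connecting path to be
\[
\gamma(t) \;=\; \bigl(x_0,\ \tfrac{k\pi}{2m} + t\tfrac{\pi}{2m}\bigr),\qquad t\in[0,1].
\]

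Using $\phi(x,\theta) = f(x)e^{im\theta}$ from Condition~\ref{cond}(i), write $a = \Re f$ and $b = \Im f$ so that
\[
\Re\phi(x,\theta) \;=\; a(x)\cos(m\theta) \;-\; b(x)\sin(m\theta).
\]
At the two endpoints of $\gamma$ one of $\cos(m\theta)$, $\sin(m\theta)$ vanishes while the other equals $\pm 1$, so the hypothesis $\Re\phi(p_j)>0$ pins down the signs of $a(x_0)$ and $b(x_0)$. Substituting $\eta = m\theta - k\pi/2 \in [0,\pi/2]$ along $\gamma$ and expanding with the angle-addition formulas reduces $\Re\phi(\gamma(t))$ to an expression of the form $A\cos\eta + C\sin\eta$ with $A,C > 0$ in every case (the precise values of $A,C$ depend on $k \bmod 4$, but their positivity follows at once from the sign conditions just extracted). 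Since $\cos\eta$ and $\sin\eta$ are nonnegative on $[0,\pi/2]$ and never simultaneously zero, $\Re\phi(\gamma(t)) > 0$ throughout. Hence $p_1$ and $p_2$ lie in the same connected component of $\{\Re\phi > 0\}$, i.e.\ in a common nodal domain of $\Re\phi$.

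To upgrade this from $\{p_1,p_2\}$ to all of $U_1\cup U_2$, I would connect any point of $U_1$ to $p_1$ by a path inside the slice $\{\theta = k\pi/(2m)\}$ (working component by component if $U_1$ is disconnected), and similarly any point of $U_2$ to $p_2$ inside $\{\theta = (k+1)\pi/(2m)\}$; along these slice paths $\Re\phi > 0$ by hypothesis. Concatenating with $\gamma$ produces a single path in $\{\Re\phi > 0\}$ joining the two chosen points, completing the proof.

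No serious obstacle arises here: the whole content is a fiberwise trigonometric positivity check, and the rotational symmetry $\theta\mapsto\theta+c$ of the formula for $\Re\phi$ effectively reduces the problem to the base case $k=0$, where the sign conditions become $a(x_0)>0$ and $b(x_0)<0$ and positivity along $\gamma$ is immediate from $\cos\eta, \sin\eta \geq 0$ on $[0,\pi/2]$. The only real bookkeeping is the case split on $k\bmod 4$, which this rotation reduction removes.
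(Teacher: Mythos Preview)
Your proof is correct and follows essentially the same approach as the paper: pick $x_0\in\pi U_1\cap\pi U_2$, run along the vertical fiber arc from $(x_0,k\pi/(2m))$ to $(x_0,(k+1)\pi/(2m))$, and check that $\Re\phi$ stays positive on that arc. The paper simply asserts the positivity from the formula $\Re\phi(x_0,\theta)=\Re f(x_0)\cos(m\theta)+\Im f(x_0)\sin(m\theta)$, while you spell out the substitution $\eta=m\theta-k\pi/2$ and the $A\cos\eta+C\sin\eta$ form with $A,C>0$; this is the same argument with the trigonometry made explicit.
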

\begin{proof}
Let $x_0$ be a point in the intersection $\pi U_1 \cap \pi U_2$. Then from the equation
\[
\Re \phi (x_0, \theta) = \Re f(x_0) \cos (m\theta) + \Im  f(x_0) \sin(m\theta),
\]
we see that $\Re \phi$ is positive along the curve
\[
\{(x_0,\theta):\frac{k \pi }{2m}\leq \theta \leq \frac{(k+1)\pi }{2m}\},
\]
which connects $U_1$ and $U_2$. Therefore $U_1$ and $U_2$ are contained in the same nodal domain.
\end{proof}
\begin{prop}
Any nodal domain of $\Re \phi|_{\pi^{-1}U}$ must intersect $\pi^{-1}U \cap \{\theta=\frac{k\pi}{2m}\}$ nontrivially for some integer $k \in \mathbb{Z}$.
\end{prop}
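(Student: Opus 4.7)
The plan is direct. I would start by picking any point $(x_0,\theta_0) \in N$, which exists since $N$ is a nonempty open nodal domain. The condition $\Re\phi(x_0,\theta_0) \neq 0$ forces $f(x_0) \neq 0$, because otherwise $\phi(x_0,\theta) = f(x_0)e^{im\theta}$ would vanish identically along the entire fiber. Writing $f(x_0) = A e^{i\eta}$ with $A > 0$, the restriction of $\Re\phi$ to the fiber $\pi^{-1}(x_0) = \{x_0\} \times S^1$ takes the simple form
\[
\Re\phi(x_0,\theta) = A\cos(m\theta + \eta),
\]
so the zero set on this fiber consists of $2m$ points equally spaced at intervals of $\pi/m$, and between consecutive zeros $\Re\phi(x_0,\cdot)$ has constant sign. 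In particular $\theta_0$ lies in a unique open fiber arc $I = (\theta_-,\theta_+)$ of length $\pi/m$ on which $\Re\phi(x_0,\cdot)$ is nowhere zero.

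The heart of the argument is then the elementary observation that every open interval of length $\pi/m$ in $\mathbb{R}/2\pi\mathbb{Z}$ must contain at least one of the $4m$ special angles $\{k\pi/(2m) : k \in \mathbb{Z}\}$, since consecutive special angles are spaced $\pi/(2m) < \pi/m$ apart. Concretely, choose $l \in \mathbb{Z}$ with $l\pi/(2m) \leq \theta_- < (l+1)\pi/(2m)$; then $(l+1)\pi/(2m) \leq \theta_- + \pi/(2m) < \theta_- + \pi/m = \theta_+$, so $(l+1)\pi/(2m) \in I$.

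To close, I would use that $\{x_0\}\times I$ is a connected subset of $\{\Re\phi \neq 0\} \cap \pi^{-1}U$ meeting $N$ at $(x_0,\theta_0)$, so by maximality of the nodal domain $\{x_0\}\times I \subset N$; in particular $(x_0,(l+1)\pi/(2m)) \in N \cap \{\theta = (l+1)\pi/(2m)\}$, as required. There is essentially no serious obstacle once one recognizes that $\Re\phi$ restricted to any fiber is a single sinusoid of frequency $m$, forcing each sign-arc to have length exactly $\pi/m$; the proposition then reduces to the metric comparison $\pi/(2m) < \pi/m$, with only the mild observation that the whole fiber arc $\{x_0\}\times I$ lies inside $N$ by connectedness.
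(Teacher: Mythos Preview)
Your proof is correct and rests on the same core observation as the paper's: on each fiber $\{x_0\}\times S^1$ with $f(x_0)\neq 0$, the function $\Re\phi(x_0,\cdot)$ is a single sinusoid $A\cos(m\theta+\eta)$, so its sign intervals have length exactly $\pi/m$, while the hypersurfaces $\theta=k\pi/(2m)$ are spaced only $\pi/(2m)$ apart. The paper argues by contradiction (assuming a nodal domain sits in an open slab of width $\pi/(2m)$ and using that the sinusoid has at most one sign change there), whereas you argue directly by exhibiting a special angle inside the sign arc and pushing it into the nodal domain via connectedness; the two arguments are dual to one another and equally short.
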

\begin{proof}
Assume for contradiction that $\Omega$ is a nodal domain of $\Re \phi|_{\pi^{-1}U}$ that is contained in
\[
\pi^{-1}U \cap \{\frac{k\pi}{2m}<\theta<\frac{(k+1)\pi}{2m}\}.
\]
From the equation
\[
\Re \phi (x, \theta) = \Re f(x) \cos (m\theta) + \Im  f(x) \sin(m\theta),
\]
we see that for each fixed $x$, $\Re \phi (x, \theta)$ either vanishes identically or has at most one sign change along the curve
\[
\{(x, \theta):\frac{k\pi}{2m}<\theta<\frac{(k+1)\pi}{2m}\}.
\]
This implies that if $x \in \pi \Omega$, then
\[
\Re \phi (x, \frac{k\pi}{2m}) = \Re \phi (x, \frac{(k+1)\pi}{2m})=0,
\]
which contradicts the assumption that the zero set of $\Re f$ gives rise to a partition of $U$.
\end{proof}
From these two propositions, we see that the nodal domains of $\Re \phi|_{\pi^{-1}U}$ can be understood from the nodal domains of the restrictions of $\Re \phi|_{\pi^{-1}U}$ to the $4m$-hypersurfaces
\[
\pi^{-1}U \cap \{\theta=\frac{k\pi}{2m}\}, ~ k =0,1,2,\ldots, 4m-1.
\]
In particular, if we define $c_{P_U}$ and $c_{Q_U}$ in terms of the sign of $\Re f$ and $\Im f$, then the number of connected components of $G_m(P_U,Q_U, c_{P_U}, c_{Q_U})$ is equal to the number of nodal domains of $\Re\phi|_{\pi^{-1}U}$.

\begin{proof}[Proof of Theorem \ref{genthm}]
Let $x\in X$ be a point where $\phi(x,\theta)\neq 0$, and let $U$ be a sufficiently small neighborhood of $x$. We may assume without loss of generality that the vertices
\[
v_{1,1},~ v_{3,1}, \ldots,~ v_{4m-3},~ v_{4m-1}
\]
of $G_m(P_U,Q_U, c_{P_U}, c_{Q_U})$ correspond to the nodal domains of the restrictions of $\Re \phi|_{\pi^{-1}U}$ to the hypersurfaces
\[
\pi^{-1}U \cap \{\theta=\frac{k\pi}{2m}\}, ~ k =1,3,\ldots,4m-3, 4m-1,
\]
that intersect the fiber $\pi^{-1}x$. Then Lemma \ref{lem1} implies that any nodal domain of $\Re \phi|_{\pi^{-1}U}$ must intersect $\pi^{-1}x$.

Now assume that $x'$ is another point in $U$. Then we may restate this as ``any nodal domain of $\Re \phi|_{\pi^{-1}U}$ that intersect $\pi^{-1}x'$ must intersect $\pi^{-1}x$'', and equivalently, ``any nodal domain of $\Re \phi$ that intersect $\pi^{-1}x'$ must intersect $\pi^{-1}x$''. Because we assumed that $X$ is connected, by the freedom of choice of the pair of points $x$ and $x'$, any nodal domain of $\Re \phi$ must intersect $\pi^{-1}x$. This proves the first part of the theorem.

For the latter part of the theorem, let $p$ be a regular zero of $\phi$, i.e.,
\[
d\phi : T_p P \to \mathbb{C}
\]
is a surjection. Choose a sufficiently small neighborhood $U \subset X$ of $\pi p$, and let $f$ be the function that satisfies
\[
\phi(x,\theta) =  f(x) e^{i m \theta} = \Re f \cos (m\theta) + \Im f \sin (m\theta) + i(\Im f \cos (m\theta) - \Re f \sin (m\theta)) = \Re \phi + i \Im \phi.
\]
If $d\Re f$ and $d \Im f$ are linearly dependent, then a straightforward computation implies that $d\phi$ has rank $\leq 1$, so $d\Re f$ and $d \Im f$ are linearly independent.

This implies that $\pi p$ is a regular zero of both $\Re f$ and $\Im f$. Also, linear independency implies that locally around $\pi p$, $\Re f=0$ and $\Im f=0$ define two curves intersecting transversally at $\pi p$. From this, we may find four open sets near $p$ that are required for Lemma \ref{mainlem}, and we infer that the number of nodal domains of $\Re\phi|_{\pi^{-1}U}$ is two.

Now because any nodal domain of $\Re\phi$ must intersect with $\pi^{-1}x$ for some $x \in U$, any nodal domain of $\Re\phi$ must contain one of the nodal domains of $\Re\phi|_{\pi^{-1}U}$, from which we conclude that $\Re \phi$ has only two nodal domains.
\end{proof}

We are ready to prove our main theorem, Theorem \ref{MAIN}.
\begin{proof}
It is sufficient to verify the assumptions in Theorem \ref{genthm} is satisfied. The first condition is trivial to verify. For the other conditions, note from the assumption that $P \to X$ is non-trivial, $\zcal_{f_{m,j}}$ is non-empty, and Theorem \ref{GENERIC1} implies that it is discrete and consists only of regular zeros.
\end{proof}

\begin{rema}
If $\zcal_{f_{m,j}}$ contains a closed curve that divides $X$ into two connected components, then the number of nodal domain can be large. For instance, if $f_{m,j}$ vanishes on the boundary of small open disc $U \subset X$, and if it does not vanish on $U$, then $\Re\phi_{m,j}$ vanish identically on $\partial \left( \pi^{-1}U \right)$, and therein, $\Re\phi_{m,j}$ has $2m$-distinct nodal domains. In particular, Theorem \ref{genthm} fails even if $f_{m,j}$ has a regular zero elsewhere.
\end{rema}

\section{\label{SCCSECT} Surfaces of constant curvature}
In this section, we  illustrate the geometry of Kaluza-Klein metrics and
the Kaluza-Klein eigenvalue problem on unit tangent bundles of surfaces of constant curvature.

\subsection{\label{FLATTORISECT} Flat tori} Let $\T^2 = \R^2/\Z^2$. We use coordinates
$z = x_1 + i x_2$.  Its unit tangent bundle
is $S \T^2 = \T^2 \times S^1$. The connection is flat and
$\Delta_H = \Delta$ is simply the Laplacian of $\T^2$. The Kaluza-Klein Laplacian is that $\Delta_G = \Delta + \frac{\partial^2}{\partial \theta^2}$ on $\T^2 \times S^1$. The Kaluza-Klein eigenfunctions are linear combinations of the product
eigenfunctions,
$$\phi_{m, \vec{k}}(x_1, x_2, \theta) = e^{i \langle \vec k, \vec x \rangle} e^{i m \theta}, \;\; \Delta_G \phi_{m, \vec{k}} = - (|\vec k|^2 + m^2)  \phi_{m, \vec{k}}. $$ The multiplicity of the eigenvalue with fixed $m$ is the number of ways of representing $|\vec k|^2$ as a sum of two squares. They correspond to eigendifferentials
$$f_{m, \vec{k}}(z) (dz)^m =  e^{i \langle \vec k, \vec x \rangle} (dz)^m.$$

In the notation \eqref{uv},
$$\left\{ \begin{array}{l} \Re \phi_{m, \vec{k}}(x_1, x_2, \theta)  = u_{m, \vec{k}}(x_1, x_2, \theta) = \cos (\langle \vec k, \vec x \rangle + m\theta), \\ \\ \;\; \Im  \phi_{m, \vec{k}}(x_1, x_2, \theta)  =v_{m, \vec{k}}(x_1, x_2, \theta)  = \sin (\langle \vec k, \vec x \rangle +m \theta). \end{array} \right.$$

The nodal sets of the imaginary part are given by,
$$\zcal_{v_{m, \vec{k}} } = \{(x_1, x_2, \theta):
\langle \vec k, \vec x \rangle +m \theta \in \pi \Z \}. $$
$\zcal_{v_{m, \vec{k}} }$ contains the set  $$
 \{(x_1, x_2, \theta): \langle \vec k, \vec x \rangle \in \pi \Z, \theta = \ell \frac{ \pi}{m}, \ell =1, \dots, m\}. $$

 Note that  $\phi_{m, \vec{k}}(x_1, x_2, \theta) $ has no zeros on $\T^2 \times S^1$ and $f_{m, \vec{k}}(z) (dz)^m$ has no zeros as an $m$-differential
 on $\T^2$.

 If we change the lattice to a general lattice $L \subset \R^2$, the
 eigenfunctions of $\T^2$ change to $e_{\vec \lambda}(\vec x) = e^{2 \pi i \langle \vec \lambda, \vec x}$ where $\vec \lambda \in \Lambda = L^*$, the dual lattice. For generic $L$, the eigenvalues have multiplicity $2$ and
 the eigenspaces are spanned by the real and imaginary parts of
 $e_{\vec \lambda}$ or equivalently by $e_{\vec \lambda}$ and its complex conjugate
 $e_{-\vec \lambda}$. The same is true of the Kaluza-Klein eigenfunctions
 $\phi_{m, \vec \lambda} = e^{2 \pi i \langle \vec \lambda, \vec x  \rangle} e^{i m \theta}$. Again, $\phi_{m, \vec \lambda}$ has no zeros. Using the bifurcation of nodal sets of eigenfunctions under generic paths of metrics
 of \cite{U}, one can show that

  \begin{conjecture} for generic Kaluza-Klein metrics on $S \T^2$,
 the joint eigenfunctions $\phi_{m,j}$ have no zeros.
 \end{conjecture}

We now give an explicit orthonormal eigenbasis of $\mathbb{T}^3$ such that all of them have exactly two nodal domains, hence proving Theorem \ref{explicit}.

To begin with, let $f_1 (x) = \cos (2\pi x)$ and $f_0 (x) = \sin (2 \pi x)$. Then
\[
\{f_{j_1}(m_1 x_1)f_{j_2}(m_2 x_2)f_{j_3}(m_3 x_3)~:~ j_k =0 \text{ or } 1,~ m_k \in \mathbb{Z}_{\geq 0}\}
\]
is an orthogonal eigenbasis of $\mathbb{T}^3$. We consider four cases.

\paragraph{Case 1: $m_1 m_2 m_3 >0$} We first have
\begin{multline*}
\langle \{f_{j_1}(m_1 x_1)f_{j_2}(m_2 x_2)f_{j_3}(m_3 x_3), f_{1-j_1}(m_1 x_1)f_{1-j_2}(m_2 x_2)f_{1-j_3}(m_3 x_3)\} \rangle \\
=\langle \{f_{j_1}(m_1 x_1)f_{j_2}(m_2 x_2)f_{j_3}(m_3 x_3)\pm f_{1-j_1}(m_1 x_1)f_{1-j_2}(m_2 x_2)f_{1-j_3}(m_3 x_3)\} \rangle
\end{multline*}
Assume without loss of generality that $j_1 =0$. Then
\begin{multline*}
f_{j_1}(m_1 x_1)f_{j_2}(m_2 x_2)f_{j_3}(m_3 x_3)\pm f_{1-j_1}(m_1 x_1)f_{1-j_2}(m_2 x_2)f_{1-j_3}(m_3 x_3)\\
=\Re\left(\left(f_{j_2}(m_2 x_2)f_{j_3}(m_3 x_3)\pm i f_{1-j_2}(m_2 x_2)f_{1-j_3}(m_3 x_3)\right) e^{2\pi i m_1 x_1}\right),
\end{multline*}
has two nodal domains by Theorem \ref{genthm}, because
\[
f_{j_2}(m_2 x_2)f_{j_3}(m_3 x_3)\pm i f_{1-j_2}(m_2 x_2)f_{1-j_3}(m_3 x_3)
\]
has a regular zero.

\paragraph{Case 2: exactly one $m_k$ is zero, and the other two are different} From the same reasoning, each eigenfunction in the new basis in the following has two nodal domains:
\begin{multline*}
\langle \{f_{j_1} (m_1 x_1) f_{j_2} (m_2 x_2),~ f_{j_1} (m_1 x_1) f_{j_3} (m_2 x_3)~:~ j_k=0 \text{ or } 1\}\rangle\\
=\langle \{f_{j_1} (m_1 x_1) f_{j_2} (m_2 x_2) \pm f_{1-j_1} (m_1 x_1) f_{j_3} (m_2 x_3)~:~ j_k=0 \text{ or } 1\}\rangle,
\end{multline*}
\begin{multline*}
\langle \{f_{j_2} (m_1 x_2) f_{j_1} (m_2 x_1),~ f_{j_2} (m_1 x_2) f_{j_3} (m_2 x_3)~:~ j_k=0 \text{ or } 1\}\rangle\\
=\langle \{f_{j_2} (m_1 x_2) f_{j_1} (m_2 x_1) \pm f_{1-j_2} (m_1 x_2) f_{j_3} (m_2 x_3)~:~ j_k=0 \text{ or } 1\}\rangle,
\end{multline*}
and
\begin{multline*}
\langle \{f_{j_3} (m_1 x_3) f_{j_1} (m_2 x_1),~ f_{j_3} (m_1 x_3) f_{j_2} (m_2 x_2)~:~ j_k=0 \text{ or } 1\}\rangle\\
=\langle \{f_{j_3} (m_1 x_3) f_{j_1} (m_2 x_1) \pm f_{1-j_3} (m_1 x_3) f_{j_2} (m_2 x_2)~:~ j_k=0 \text{ or } 1\}\rangle.
\end{multline*}

\paragraph{Case 3: exactly one $m_k$ is zero, and the other two are equal} Again by the same reasoning, each of the following
\begin{align*}
f_0 (m x_1) f_0(mx_2) \pm f_1(mx_1) f_0(mx_3), \\
f_0 (m x_2) f_0(mx_3) \pm f_1(mx_2) f_0(mx_1),\\
f_0(mx_3) f_0(mx_1) \pm f_1(mx_3) f_0 (mx_2),\\
f_1 (m x_1) f_1(mx_2) \pm f_1(mx_3)f_0(mx_1),  \\
f_1 (m x_2) f_1(mx_3) \pm f_1(mx_1)f_0(mx_2), \\
f_1(mx_3) f_1(mx_1) \pm f_1 (mx_2)f_0(mx_3)
\end{align*}
has two nodal domains, and these are the basis of
\[
\langle \{f_{j_1}(mx_1)f_{j_2}(mx_2),f_{j_1}(mx_1)f_{j_3}(mx_3),f_{j_2}(mx_2)f_{j_3}(mx_3)~:~ j_k=0 \text{ or } 1\}\rangle.
\]

\paragraph{Case 4: exactly one $m_k$ is nonzero} In this case, we consider orthogonal eigenfunctions
\begin{align*}
f_0 (m x_1) + f_0 (m x_2)-\frac{1}{2}f_0 (m x_3),\\
f_0 (m x_1) + f_0 (m x_3)-\frac{1}{2}f_0 (m x_2),\\
f_0 (m x_2) + f_0 (m x_3)-\frac{1}{2}f_0 (m x_1),\\
f_1 (m x_1) + f_1 (m x_2)-\frac{1}{2}f_1 (m x_3),\\
f_1 (m x_1) + f_1 (m x_3)-\frac{1}{2}f_1 (m x_2),\\
f_1 (m x_2) + f_1 (m x_3)-\frac{1}{2}f_1 (m x_1),
\end{align*}
which span
\[
\langle \{f_{j}(mx_1),f_{j}(mx_2),f_{j}(mx_3)~:~ j=0 \text{ or } 1\}\rangle.
\]
Each of these has only two nodal domains from the following lemma.
\begin{lemm}
Let $m$ be a positive integer. Then
\[
\cos (m x_1) + \cos (m x_2) - \frac{1}{2} \cos (m x_3)
\]
has only two nodal domains.
\end{lemm}
\begin{proof}
Let $x_1-x_2=a$, $x_1-x_3=b$, and $x_2+x_3 = c$. Then
\begin{multline*}
e^{2\pi i mx_1} + e^{2\pi i mx_2} -\frac{1}{2} e^{2\pi i mx_3}\\
 = \left(e^{\pi i m a}e^{\pi i m b } + e^{-\pi i m a}e^{\pi i m b } -\frac{1}{2} e^{\pi i m a}e^{-\pi i m b }\right) e^{2\pi i m c},
\end{multline*}
and from Theorem \ref{genthm}, it is sufficient to prove that
\[
e^{\pi i m a}e^{\pi i m b } + e^{-\pi i m a}e^{\pi i m b } -\frac{1}{2} e^{\pi i m a}e^{-\pi i m b }
\]
has a regular zero. Let $\pi m (a+b)=x$ and $\pi m (a-b)=y$, then this is equivalent to
\[
\cos x + \frac{1}{2} \cos y + i \left(\sin x - \frac{3}{2} \sin y\right)
\]
having a regular zero. Since $\cos x + \frac{1}{2} \cos y$ and $\sin x - \frac{3}{2} \sin y$ do not have singular points, it is sufficient to check if these two functions have a common zero, in other words, if
\[
\cos x + \frac{1}{2} \cos y + i \left(\sin x - \frac{3}{2} \sin y\right)= 0
\]
has a solution. Note that this is equivalent to
\begin{equation}\label{something}
e^{ix} = -\frac{1}{2} \cos y + i \frac{3}{2} \sin y.
\end{equation}
Because
\[
\left|-\frac{1}{2} \cos y + i \frac{3}{2} \sin y\right| = \frac{1}{4} + 2 \sin^2 y,
\]
for $y$ such that   $\frac{1}{4} + 2 \sin^2 y=1$, there is $x$ satisfying \eqref{something}, and this completes the proof.
\end{proof}

\subsection{\label{SPHERE} Kaluza-Klein metrics on $S^3$ }

Let $(S^2, g_0)$ be the $2$-sphere with its standard metric of curvature $1$.
Then its unit tangent  $S S^2 = SO(3) =  \RP^3 = S^3/\pm 1$ and the Kaluza-Klein metric
is the standard metric of constant sectional curvature $1$ on $S^3$ (divided
by the antipodal group $\Z_2$). The Kaluza-Klein Laplacian is therefore the
standard Laplacian $\Delta_{S^3}$ on $\Z_2$-invariant functions.

Since $S^3$ is a group, $L^2(S^3) = \bigoplus_{N=0}^{\infty} V_N \otimes V_N$ where $V_N $ is an irreducible representation of $S^3$ of dimension $N+1$.  Alternatively, the eigenfunctions of
$S^3$ are harmonic homogeneous polynomials on $\R^3$.
Moreover, $\Delta |_{V_N \otimes V_N} =  N(N+2) = (N+1)^2 - 1$. The eigenfunctions of $\RP^3$ are those where $N$ is even.

We need explicit separation of variables expressions for equivariant
spherical harmonics, and therefore need to introduce coordinate systems.
We use `axis - angle'  Hopf coordinates $(\alpha, \theta, \phi)$  on $S^3 \subset \R^4$ defined by
$$\vec x = \begin{pmatrix}x_1 \\ x_2 \\ x_3 \\ x_4 \end{pmatrix} = r \begin{pmatrix} \sin \alpha  \cos \phi \\
\sin \alpha \sin \phi  \\
\cos \alpha  \cos \theta  \\
\cos \alpha \sin \theta \end{pmatrix}.$$
Here $0 \leq \alpha \leq  \pi/2, 0 \leq \theta, \phi  \leq 2 \pi$.  This corresponds
to writing $$z_1 = e^{i \phi} \sin \alpha, \;\; z_2 = e^{i \theta} \cos \alpha, ((z_1, z_2) \in \C^2 \simeq \R^4)). $$
There exist two commuting isometric $S^1$ actions generated by the Killing vector fields
$$X = \frac{\partial}{\partial \phi} + \frac{\partial}{\partial \theta},\;\;
Y = \frac{\partial}{\partial \phi}  - \frac{\partial}{\partial \theta}. $$

The metric is
$(d \alpha)^2 + (\cos \alpha d \theta)^2 + (\sin \alpha d \phi)^2. $
In these coordinates one has an orthogonal basis of eigenfunctions given by
\begin{multline*}
\Phi^{m_+, m_-}_N (\alpha, \phi, \theta)  =   C^{m_+, m_-}_N\;e^{i (m_+ + m_-) \phi} e^{i(m_+-m_-) \theta} \cdot\\
\cdot  (1- \cos 2 \alpha)^{\frac{m_+ + m_-}{2}} (1+ \cos 2 \alpha)^{\frac{m_+ - m_-}{2}}  P_{\frac{N}{2} - m_+}^{m_+ + m_-, m_+ - m_-}(\cos 2 \alpha),
\end{multline*}
where $P_N^{(a,b)}$ is a Jacobi polynomial and where
$$|m_{\pm}| \leq \frac{N}{2},  \frac{N}{2} - m_{\pm} \in {\mathbb N}. $$Here, weight  $m$ in our sense  means that the eigenfunctions transform by $e^{i m_-(\phi - \theta)}$.
 $\Phi^{m_+, m_-}_N$  are also known  as ``Wigner D-functions'' on $SU(2)$. Another expression is
\begin{multline*}
T_N^{m_1, m_2} = \\
C_N^{m_1, m_2}\; (\cos \alpha e^{i \theta})^{m_1 + m_2} (\sin \alpha e^{i \phi})^{m_2-m_1} P_{N/2 - m_2}^{(m_2 - m_1, m_2 + m_1)} (\cos (2 \alpha)).
\end{multline*}
Here $m_1 = m$ in our notation.
These are manifestly joint eigenfunctions of $\Delta_{S^3}$ and
 of $\frac{\partial}{\partial \theta}, \frac{\partial}{\partial \phi}. $

 \begin{lemm} \label{S3BAD} The nodal sets of the  equivariant eigenfunctions  $\Phi^{m_+, m_-}_N$ (or
 equivalently $T_N^{m_1, m_2} $)  have real dimension $2$.
 \end{lemm}

 \begin{proof}
The   only factors with
 zeros are the $\alpha$-functions. These have roughly $m$ discrete zeros
 in $\alpha$. Hence, the complex nodal set is a union
 $$\{(\theta, \phi, \alpha): (\cos \alpha)^{m_1 + m_2} (\sin \alpha)^{m_2-m_1} P_{N/2 - m_2}^{(m_2 - m_1, m_2 + m_1)} (\cos (2 \alpha)) = 0\}, $$
 and thus has real dimension $2$.\end{proof}

   As a result, these eigenfunctions do not satisfy the conditions of the generic Kaluza-Klein metrics to which our results apply,
 and their nodal sets are quite different.

 As mentioned in the introduction,   the numerical experiments of A. Barnett et all \cite{B18}
 show that random spherical harmonics of degree $N$ on $S^3$ also
 have different types of nodal sets than our generic eigenfunctions. Namely,
 the expected  number of nodal domains has the asymptotics $c N^3$ for a certain $c > 0$. As proved in \cite{JZ2019}, the  nodal sets of real/imaginary
 parts of random equivariant eigenfunctions with fixed $m$ (a subspace isomorphic to $V_N$) have connected nodal sets.
 The difference is due to the fact that our random equivariant spherical harmonics are a thin subset of the
 random spherical harmonics of degree $N$ on $S^3$.
\begin{rema}
In \cite{JZ2019}, we compute the expected genus of the single component of
the nodal sets of real/imaginary parts of  random equivariant spherical harmonics of degree $(N,m)$ where $|m| \leq N, 2 | (N-m)$.  The expected Euler characteristic is of the form $m (N^2 - m^2) + N$ modulo lower order terms.
\end{rema}

\subsection{\label{HYPERBOLICSECT} Hyperbolic surfaces ${\mathbb H}^2$}

Although it differs from our prior discussion in the compact case, let
us consider  a  finite area hyperbolic real Riemann surface of constant negative curvature $-1$.  Then $X = S^*_g M = \Gamma \backslash G$ where $G  = PSL(2, \R)$. The total space $X$ carries a Lorentz Cartan-Killing metric with indefinite Laplacian the Casimir operator $\Omega$.
It is well known that $\Omega = H^2 + V^2 - W^2$. We now change the
sign of the third term to get the Kaluza-Klein Laplacian $\Delta_{X} = H^2 + V^2 + W^2$. The associated metric defines a Riemannian submersion
$\pi: X \to M$ with fibers given by $K$-orbits. They are necessarily totally geodesic. It follows that the horizontal Laplacian $H^2 + V^2$ commutes
with the vertical Laplacian $W^2$. This is obvious because $0 = [\Omega, W^2] = [H^2 + V^2, W^2] = 0$.

The joint eigenfunctions of $\Omega, W$ are denoted by $\phi_{m,j}$.
When $ m = 0$ they are pullbacks of eigenfunctions of $M = \Gamma \backslash G / K$.

In particular the number of nodal domains of $\phi_{j, 0}$ on $X$ is the same as the number of nodal domains of $\phi_{j}$ on $M$. The former nodal sets are $K$-invariant and in the case of regular nodal components
are $2$-tori over circles.

 The lift of weight $m$ of an $m$-differential $f (dz)^m$ is given by
$$\Phi(x, y, \theta) = y^{m/2} f(x + iy) e^{-i m \theta}. $$
Here, the \kahler potential is
 $\phi =  \log y$, $d  \phi = \frac{dy}{y}$, $\Delta \phi = y^2 (\log y)'' = - 1$. Also,
 $\|d \phi\|^2 = y^2 \|\frac{dy}{y}\|^2 = 1$ and
$* d \phi = * (\phi_x dx  + \phi_y dy) = (- \phi_x dy + \phi_y dx) y$. The Maass operator is $$D_m =  y^2 (\frac{\partial^2}{\partial x^2} + \frac{\partial^2}{\partial y^2}) - 2 i m y\frac{\partial }{\partial x} $$
and
$$D_m f_{m,j} = s(1 -s) f_{m,j}. $$


Breaking up into real and imaginary parts gives the system,
$$\left\{ \begin{array}{l} \Delta \Re f_{m,j} + 2 m y\frac{\partial}{\partial x} \Im  f_{m,j} = s(1-s) \Re f_{m,j}, \\ \\
\Delta \Im f_{m,j} - 2 m y\frac{\partial}{\partial x}  \Re f_{m,j} = s(1-s) \Im f_{m,j}.  \end{array} \right.$$


The raising/lowering operators are  the Maass operators defined by
\begin{equation*} \left\{ \begin{array}{l} K_k  = (z - \bar{z}) \frac{\partial}{\partial z} + k
= 2 i y^{1-k} \frac{\partial}{\partial z} y^k,  \\ \\
 L_k  = (z - \bar{z}) \frac{\partial}{\partial z} - k
= -2 i y^{1+k} \frac{\partial}{\partial \bar{z}} y^{-k} = \overline{K}_{-k}. \end{array} \right. \end{equation*}
Then,
$$K_k: \mathfrak H_{k} \to \mathfrak H_{k+1},\;\; L_k: \mathfrak H_{k} \to \mathfrak H_{k-1},$$
and $$D_{k + 1} K_k = K_k D_k, D_k L_{k + 1} = L_{k + 1} D_k, $$
and
$$D_k = L_{k+1} K_k + k(k +1) = K_{k-1} L_k + k(k-1). $$

\subsubsection{Automorphic forms on the full modular group}
Now we consider the case $\Gamma = PSL_2(\mathbb{Z})$. Note that the quotient $\Gamma \backslash G$ is non-compact in this case. Nevertheless, it is known that $-\Delta_G$ has infinitely many discrete spectrum, where corresponding $L^2$ integrable eigenfunctions can be chosen so that they are in one-to-one correspondence with Maass--Hecke cusp forms or holomorphic Hecke cusp forms. We refer the readers to \cite{MR2061214} for detailed background.
\begin{theo}
Let $X = PSL_2(\mathbb{Z})\backslash \mathbb{H}$, and let $\phi_{m,ir}$ be a weight $m$ Maass--Hecke cusp form on $$PSL_2(\mathbb{Z})\backslash PSL_2 (\mathbb{R}).$$ Assume that the zeros of $\phi_{m,ir}$ are isolated. Then $\Re \phi_{m,ir}$ has only two nodal domains.
\end{theo}
\begin{proof}
The first statement of Condition \ref{cond} follows from the definition of Maass--Hecke cusp form, and the second statement follow from the fact that $\phi_{m,ir}:X \to \mathbb{C}$ can not be scaled to a real-valued function, and that $\phi_{m,ir}$ is analytic. The third statement follows from the assumption.

Now, because the first Hecke eigenvalue is $1$, the first Fourier coefficient of $\phi_{m,ir}$ at the cusp does not vanish, meaning that $i\infty$ is a regular zero of $\phi_{m,ir}$. We conclude the proof by applying Theorem \ref{genthm}.
\end{proof}
\begin{rema}
It is not hard to see that in the constant curvature case,  the nodal set of $\phi_{2,ir}$ consists of the fibers over the critical point set  $C_{\phi_{ir}}$  of $\phi_{ir}$.
At this time, it does not seem to be known whether  $C_{\phi_{ir}}$ is necessarily a discrete set of points in the case
of hyperbolic surfaces. This cannot be proved by a purely local calculation, since the critical point set of rotationally
invariant Dirichlet/Neumann eigenfunctions on a  compact rotationally invariant submanifold $C_R$ of a  hyperbolic cylinder ${\mathbb H}^2/\langle \gamma_0 \rangle$ consists of a union of $S^1$ orbits. Here, $\gamma_0$ is a hyperbolic element
and $\langle \gamma_0 \rangle$ is the cyclic group it generates. Thus, negative curvature does not rule out codimension $1$
critical point sets. One can put any negatively curved $S^1$ invariant metric on  $C_R$ and obtain the same result, so it
is not an effect of constant curvature. We conjecture that for compact hyperbolic surfaces without boundary,
 $C_{\phi_{ir}}$ is a finite set for every eigenfunction.
\end{rema}

When we have holomorphicity of $\phi$, we may remove the assumption that the zeros of $\phi$ being isolated. For instance, we have:
\begin{theo}\label{hecke}
Let $X = PSL_2(\mathbb{Z})\backslash \mathbb{H}$, and let $\phi_{m,0}$ be a Laplacian eigenfunction on $PSL_2(\mathbb{Z})\backslash PSL_2 (\mathbb{R})$ corresponding to a holomorphic Hecke cusp form $F$ of weight $m$. Then $\Re \phi_{m,0}$ has only two nodal domains.
\end{theo}
\begin{proof}
We first note that $\phi_{m,0} (z,\theta)= y^{m/2}F(z)e^{-im\theta}$, and $F$ is holomorphic. Therefore Condition \ref{cond} is satisfied.

Because we assumed that $F$ is a Hecke cusp form, the first Hecke eigenvalue is $1$. Therefore $i\infty$ is a regular zero of $\phi_{m,0}$, and now the theorem follows from Theorem \ref{genthm}.
\end{proof}
\begin{coro}
There exist eigenfunctions on $PSL_2 (\mathbb{Z}) \backslash PSL_2(\mathbb{R})$ that have only two nodal domains but with arbitrarily large eigenvalues.
\end{coro}

We remark here that Theorem \ref{hecke} is false, without the assumption that $F$ is a Hecke cusp form. To construct a counter example, let $\Delta(z)$ be the discriminant modular form given by
\begin{multline*}
\Delta(z) = \sum_{n=1}^\infty \tau(n) q^n \\
= q -24q^2 +252 q^3-1472 q^4  +4830 q^5  -6048 q^6  -16744 q^7  +\ldots,
\end{multline*}
where $q=e^{2\pi i z}$. This is a weight $12$ modular form on $PSL_2(\mathbb{Z})\backslash \mathbb{H}$. Thus $\Delta(z)^2$ is a modular form of weight $24$ on $PSL_2(\mathbb{Z})\backslash \mathbb{H}$, and
\[
\Phi=\Re (y^{12} \Delta(z)^2 e^{-24i\theta})
\]
is a Laplacian eigenfunction on $PSL_2(\mathbb{Z})\backslash PSL_2 (\mathbb{R})$ of weight $24$. To count the number of nodal domains of this eigenfunction, we let
\[
\mathcal{F}=\{x+iy~:~ |x| \leq \frac{1}{2}, ~ x^2+y^2 \geq 1\} \subset \mathbb{H}
\]
be the fundamental domain of $PSL_2(\mathbb{Z}) \backslash \mathbb{H}$, and let $M_0 = \mathcal{F} \times \{\theta~:~ 0\leq \theta < 2\pi\}$.

We then consider the restrictions of $\Phi$ to the top $\theta = 2\pi$, side $x=-1/2$, and front $x^2+y^2=1$ of the solid $M_0$.

It can be shown that the nodal set of $\Phi$ on the side is that of $\cos (24\theta)=0$, and on the front is that of $\cos (12(\varphi+2\theta))=0$, where we define $\varphi = \arccos (x)$. We compute the nodal set of the restriction to the top numerically using Mathematica.

\begin{center}
\includegraphics[width=\textwidth]{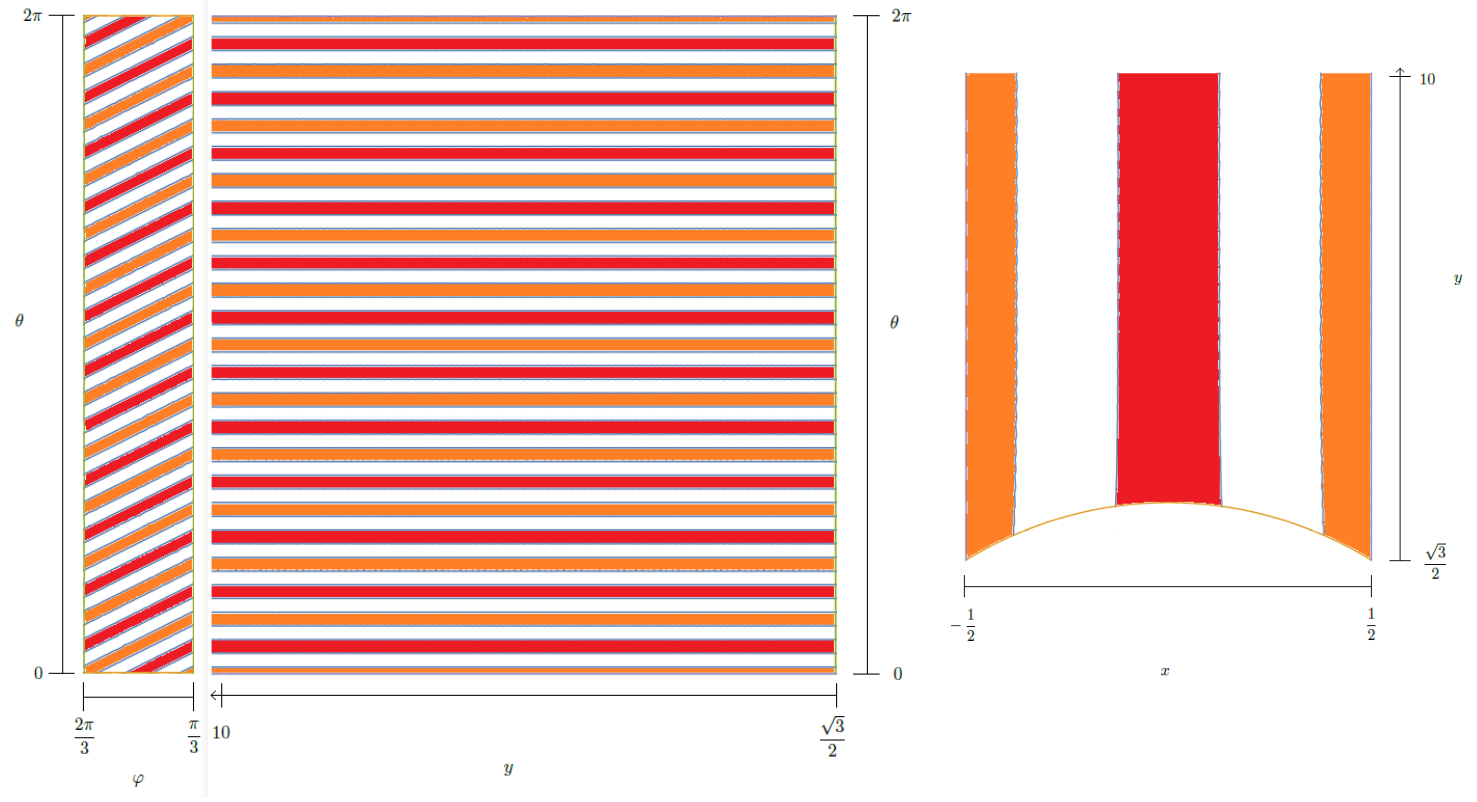}\\
The nodal set of $\Phi$ on the front, the side, and the top of the solid $M_0$.
\end{center}

Note that we may obtain $PSL_2(\mathbb{Z})\backslash PSL_2 (\mathbb{R})$ from $M_0$ by gluing the sides via $(x,y,\theta) = (x+1,y,\theta)$ (corresponding to $\begin{pmatrix}  1 & 1 \\ 0 & 1 \end{pmatrix}$), the top and the bottom via $(x,y,\theta) = (x,y,\theta+2\pi )$ (corresponding to $k(\theta)=k(\theta+2\pi)$), and then the front with itself via $(\varphi,\theta) = (\pi-\varphi, \theta+\varphi)$ and $(\varphi, \theta) = (\varphi, \theta+2\pi)$ (corresponding to $\begin{pmatrix}  0 & -1 \\ 1 & 0 \end{pmatrix}$ and $k(\theta)=k(\theta+2\pi)$).

From these, one can verify that $\Phi$ has exactly four nodal domains, where in the pictures above, two positive nodal domains are colored differently with red and orange.

\bibliographystyle{cdraifplain}
\bibliography{bibfile}

\end{document}